\documentclass[11pt]{article}
\usepackage[dvips]{graphicx}%
\usepackage{textcomp}
\usepackage{amsbsy}
\usepackage{latexsym}
\usepackage[mathscr]{eucal}
\usepackage{amsfonts,amsmath,amsthm}
\usepackage{amssymb}
\usepackage{xcolor}

\usepackage{mathtools}
\usepackage[symbol]{footmisc}

\usepackage{fullpage}

\def\rnew{\color{magenta}}
\def\cnew{\color{blue} }
\newtheorem{lemma}{Lemma}[section]
\newtheorem{prop}[lemma]{Proposition}
\newtheorem{cor}[lemma]{Corollary}
\newtheorem{theorem}[lemma]{Theorem}
\newtheorem{remark}[lemma]{Remark}

\newtheorem{definition}[lemma]{Definition}

\def\RR{\rm \hbox{I\kern-.2em\hbox{R}}}
\def\NN{\rm \hbox{I\kern-.2em\hbox{N}}}
\def\ZZ{\rm {{\rm Z}\kern-.28em{\rm Z}}}
\def\CC{\rm \hbox{C\kern -.5em {\raise .32ex \hbox{$\scriptscriptstyle
|$}}\kern
-.22em{\raise .6ex \hbox{$\scriptscriptstyle |$}}\kern .4em}}

\def\<{\langle}
\def\>{\rangle}
\def\t{\tilde}

\def\e{\varepsilon}

\def\nl{\newline}
\def\o{\overline}
\def\wt{\widetilde}

\def\cT{{\cal T}}
\def\cA{{\cal A}}
\def\cI{{\cal I}}

\def\cB{{\cal B}}

\def\cM{{\cal M}}

\def\Chi{\raise .3ex
\hbox{\large $\chi$}} 
\def\lsima{\hbox{\kern -.6em\raisebox{-1ex}{$~\stackrel{\textstyle<}{\sim}~$}}\kern -.4em}
\def\lsim{\hbox{\kern -.2em\raisebox{-1ex}{$~\stackrel{\textstyle<}{\sim}~$}}\kern -.2em}
\def\[{\Bigl [}
\def\]{\Bigr ]}
\def\({\Bigl (}
\def\){\Bigr )}
\def\[{\Bigl [}
\def\]{\Bigr ]}
\def\({\Bigl (}
\def\){\Bigr )}

\def\R{\mathbb{R}}

\def\nl{\newline}
\def\T{{\relax\ifmmode I\!\!\hspace{-1pt}T\else$I\!\!\hspace{-1pt}T$\fi}}
\def\N{\mathbb{N}}
\def\Z{\mathbb{Z}}
\def\N{\mathbb{N}}

\def\lsim{\raisebox{-1ex}{$~\stackrel{\textstyle<}{\sim}~$}}

  \def\NN{N}                  %

\def\t#1{\tilde{#1}}

\def\cA{{\cal A}}

\def\cA{{\cal A}}
\def\cC{{\cal C}}
\def\cS{{\cal F}}

\def\cI{{\cal I}}

\def\cM{{\cal M}}
\def\cN{{\cal N}}
\def\cT{{\cal T}}

\def\cL{{\cal L}}
\def\cB{{\cal B}}

\def\cK{{\cal K}}
\def\cX{{\cal X}}
\def\cS{{\cal S}}

\def\cR{{\cal R}}

\newcommand{\be}{\begin{equation}}
\newcommand{\ee}{\end{equation}}
\newcommand{\bea}{$$ \begin{array}{lll}}
\newcommand{\eea}{\end{array} $$}

\def \exp{\mathop{\rm    exp}}

\newcommand{\beqn}{\begin{equation}}
\newcommand{\eeqn}{\end{equation}}

\makeatletter
\@addtoreset{equation}{section}
\makeatother

\newcommand\dist{\mathop{\rm dist}}

\newcommand\eref[1]{{\rm (\ref{#1})}}
\newcommand{\iref}[1]{{\rm (\ref{#1})}}

\newcommand{\Funding}{This research was supported in part by the NSF Grants DMS-2409807 (AB), DMS-2513112 (WD), DMS-2245097 (WD), DMS-2038080 (WD), DMS-2134140 (RD, GP), DMS-2424305 (JWS), by the Foundation Sciences Math\'ematiques de Paris (AC), by the German Research Foundation grant CRC 1481 (WD), and by the ONR MURI Grant N00014-20-1-2787 (RD, GP, JWS).}

\newcommand{\Addresses}{{
  \bigskip
  \footnotesize

  A.~Bonito, R. DeVore, G. Petrova, and J.W. Siegel, \textsc{Department of Mathematics, Texas A\&M University, College Station, TX 77843, USA;}
  \texttt{\{bonito,ronalddevore,gpetrova,jwsiegel\}@tamu.edu}

  \medskip

  A. Cohen, \textsc{Laboratoire Jacques-Louis Lions, Sorbonne Universit\'e, 4, Place Jussieu, 75005 Paris, France;}
  \texttt{albert.cohen@sorbonneuniversite.fr}

  \medskip

  W.~Dahmen, \textsc{Department of Mathematics, University of South Carolina, Columbia, SC 29208, USA;} \texttt{dahmen@math.sc.edu}

}}

\title{Sampling and reconstruction of convex functions\thanks{\Funding}}

\author{Andrea Bonito \and Albert Cohen \and Wolfgang Dahmen \and Ronald DeVore \and Guergana Petrova \and Jonathan W. Siegel}

\date{\today}

\begin{document}

\maketitle

\begin{abstract}
We discuss optimal recovery for classes of multivariate convex functions
from given point samples, as well as the sampling numbers of these classes,
corresponding to optimal sample choices.  Upper and lower bounds for either variant are established when
the reconstruction error is measured in $L_p$ for $1\leq p\leq \infty$.
These bounds match, sometimes up to logarithmic factors, and therefore characterize
the respective optimal rate of decay. 
For classical smoothness classes such as Sobolev, H\"older or Besov spaces,
it is well known that the optimal decay rate of sampling numbers can be achieved by sampling
on uniform tensor product grids and using  linear methods of reconstruction, such as piecewise polynomial
interpolation. One of the main findings in this paper is that for classes of convex functions, these 
procedures generally produce suboptimal rates, except when  $p=1$ and 
$p=\infty$, and
are outperformed by nonlinear reconstruction methods that do not employ tensor product grids.
\end{abstract}

\vspace{0.5cm}

\section{Introduction}

\subsection{Recovery from point samples}
 
The problem of recovering a general multivariate function $f:\Omega\to \R$,  defined on a closed set $\Omega\subset \R^d$, 
from point samples is ubiquitous in a large number of applications. Given a set 
$\Lambda:=\{x^1,\dots,x^n\}$  of sample sites $x^j=(x^j_1,\dots,x_d^j)\in\Omega$, and the data vector 
\be 
\label{data} 
w=w(f):=(f(x^j))_{j=1}^n\in\R^n
\ee
of a continuous function $f\in C(\Omega)$, we are interested in constructing an approximation $\t f$ to $f$ with
recovery error $\|f-\t f\|_X$ measured in some Banach space norm $\|\cdot\|_X$.  In this paper, $X$ will always be an $L_p$ space with $1\le p\le \infty$,  and $\Omega$ will be the unit cube, that is,
\be
X=L_p(\Omega) \quad {\rm and} \quad \Omega=\Omega_d:=[0,1]^d.
\ee

In order to give  quantitative bounds on the recovery error in $X$,  one needs to make  more  assumptions about $f$ than the fact that $f$  is merely continuous.  Indeed, otherwise $f$ can be anything away from the data sites.
Such an additional  assumption is commonly referred to as a {\it prior} or {\it model class assumption} and consists of a requirement that $f\in \cK$, where
$\cK$ is a compact set in $X$.  Given the data sites $\Lambda$  and such a model class $\cK$,  a  recovery algorithm is  a mapping 
\be 
\label{alg}
A: \R^n\to X,
\ee
and the approximation to $f$ by  this algorithm is
\be 
\label{recovery}
\t f:= A(w(f)).
\ee
We denote by
\be
\label{allA}
\cA:=\{A:\R^n\to X\}
\ee
the set of all possible  mappings.  

The performance of an algorithm  $A$ on an individual $f$ is given by
\be 
\label{errorA}
E(f;\Lambda,A)_X:= \|f-A(w(f))\|_X,
\ee
whereas the worst case performance on the model class $\cK$ is 
\be
\label{errorAK}
E(\cK;\Lambda,A)_X:=\sup_{f\in \cK}E(f;\Lambda,A)_X.
\ee
The optimal recovery of the functions $f$ in the model class $\cK$ from the data observations $w(f)$ is then given by the quantity
\be 
\label{or}
\rho(\cK;\Lambda)_X:=\inf_{A\in \cA} E(\cK;\Lambda,A)_X,
\ee
that describes the performance of a recovery algorithm that is optimal over the class $\cK$ for the given data sites $\Lambda$.

There is a simple theoretical description of an optimal recovery algorithm and its performance error.  
Given a set $\Lambda$ of data sites and a  data vector $w \in \mathbb R^n$, let  $\cK_w$ denote the set of all functions $g\in \cK$ which satisfy the given data, i.e., $w(g)=w$.  Consider a ball $B_w=B(f_w,r_w)\subset X$
of smallest possible radius $r_w$ that contains $\cK_w$.  Then, the center $f_w$ of $B_w$ is an optimal recovery from the given data vector 
$w$,  and the radius $r_w$ is the optimal recovery error for the observed data vector $w$. 
Note that for those $w\in\R^n$ for which  $\cK_w$ is empty, one has $r_w=0$.
Any mapping $A^*=A^*(\cK,\Lambda)$ such that
\be 
\label{optA}
A^*:w\to f_w,
\ee
when $\cK_w$ is not empty, is an optimal recovery algorithm for $\cK$ and $\Lambda$,
and we can identify
\be
\label{opterror}
\rho(\cK;\Lambda)_X =\sup_{f\in \cK} \|f-A^*(w(f))\|_X=\sup_{f\in \cK}r_{w(f)}
\ee
as the optimal recovery error. 

The optimal recovery mapping $A^*$ is only of theoretical interest. In applications, one wants a more practical mapping.
Notice that if in   place of $A^*$ one constructs a mapping $\tilde A$ that, when $\cK_w$ is not empty, maps each $w$ into an element $g_w\in \cK_w$, i.e.,   finds an element in the model class which satisfies the measurements, 
then the corresponding algorithm is {\it near optimal}.  Namely, we have
\be
\label{nor}
\|f-\tilde A(w(f))\|_X\le 2 \rho(\cK;\Lambda)_X, \quad f\in \cK.
\ee
The maps $A^*$ and $\t A$ are actually optimal, respectively near-optimal, 
not only over the class $\cK$, but also over the set $\cK_w$ for each individual possible data instance $w$.
This property is usually referred to as {\it instance optimality}.
 
 Note that the optimal recovery framework does not impose constraints on the recovery map $A$. Nevertheless, in practical applications, it is often preferable to restrict $A$ to be {\it linear}. This leads to the definition of linear optimal recovery $\rho^\ell(\cK;\Lambda)_X$, where in \eref{or} the infimum  is taken over the class $\cA^\ell$ of all linear maps from $\R^n$ to $X$.
Obviously, one has that 
\be
\label{lr}
\rho(\cK;\Lambda)_X\le \rho^\ell(\cK;\Lambda)_X.
\ee

In both cases, a crucial question that arises  is the best choice of data sites.  For a fair competition, we fix the number $n$ of the data sites and define
\be
\label{serror}
\rho_n(\cK)_X:= \inf_{\#(\Lambda)=n} \rho(\cK;\Lambda)_X,
\ee
which are called the {\it  sampling numbers} for the model class $\cK$ in the norm $\|\cdot\|_{X}$. One similarly defines the {\it linear  sampling numbers} 
\be
\label{lerror}
\rho_n^\ell(\cK)_X:= \inf_{\#(\Lambda)=n} \rho^\ell(\cK;\Lambda)_X.
\ee 
There is a practical interest in evaluating sampling numbers.
Indeed, minimizing the number of point evaluations needed for approximating functions from a given class with some prescribed accuracy is particularly important when each evaluation is costly (it may for example necessitate solving a PDE, or running a physical or numerical experiment).

Sampling numbers can be related to classical quantities that describe the complexity of compact sets. The first classical notion are the {\it Kolmogorov widths}
\begin{equation}\label{d:kol}
d_n(\cK)_X:=\inf_{\dim(X_n)\leq n}\max_{f\in \cK}\min_{g\in X_n}\|f-g\|_X,
\end{equation}
which describe the best possible approximation of $\cK$ by an $n$-dimensional linear space. Obviously, one has the lower bound
\be
d_n(\cK)_X\leq \rho_n^\ell(\cK)_X.
\label{widthsamp}
\ee
The second classical notion are the {\it entropy numbers}
\be
\e_n(\cK)_X:=\inf\{\e \; : \; \exists f_1,\dots,f_{2^n}\in X \; {\rm s.t.}\;
\max_{f\in \cK}\min_i \|f-f_i\|_X \leq \e\},
\ee
which, for every fixed $n\in\N$, are  the smallest $\e$ such that $\cK$ is covered by $2^n$ balls of radius $\e$. It has been shown in \cite{CDPW} that the rate of decay of the worst case error in approximating  $\cK$  by any stable, linear or non-linear, approximation method using $n$ parameters cannot be faster than the rate of decay of the entropy numbers. Therefore, we do not expect $\rho_n(\cK)_X$ to decay faster than $\e_n(\cK)_X$.

The sampling numbers and linear sampling numbers have been extensively studied in the literature. 
In particular, their asymptotic rate of decay  have been determined for many classical model classes.
For example, if $\cK$ is the unit ball of a Lipschitz, Sobolev, or Besov space which is embedded in $C(\Omega)$, one   knows the asymptotics of the  sampling numbers $\rho_n(\cK)_{L_p}$ and linear sampling numbers $\rho^\ell_n(\cK)_{L_p}$, and their decay rates agree.  Moreover, it is known that, choosing the data sites 
on uniform tensor product grids, realizes this optimal rate of decay. We refer to \cite{NT}, see Theorem 23, for results of this type,
or
 \cite{KU}, see  Theorem 11.1,  for a complete account on various notions of widths
for Sobolev classes, including their sampling numbers.

\subsection{Model classes of convex functions}

In some applications, one is interested not only in classical smoothness classes but in 
model classes which impose different natural constraints for membership.  
The present paper is interested in one such setting where the model classes consist of convex functions,  which are ubiquitously encountered in optimization tasks coming from various applications. 

We denote by 
$\cC_{\rm conv}(\Omega)$ 
the set of all   convex functions on $\Omega$.
Recall that a function $f$ is  in   $\cC_{\rm conv}(\Omega)$, if and only if its restriction $f|_L$ to each line segment $L\subset\Omega$ is a convex univariate function. Note  that every  
$f\in \cC_{\rm conv}(\Omega)$ is 
continuous on ${\rm Int}(\Omega):=(0,1)^d$,  but  generally not in
$C^1$. 

We use the standard notation $v\cdot w$ for the scalar product of $v,w\in \R^d$, and $|v|$ for the Euclidean norm of $v$.
At each point $x\in {\rm Int}(\Omega)$, 
the set 
\begin{equation}
\label{subgradient}
    \partial f(x) := \{v\in \R^d:~f(y) \geq f(x) + v\cdot (y-x)~\text{for all $y\in \Omega $}\},
\end{equation} is called the  {\it subgradient}  of $f$ at $x$. 
The set $\partial f(x)$ 
 is non-empty, convex, and compact. Moreover, 
an important property of the subgradient is that it is \textit{monotone} in the sense that whenever $x,y\in \Omega$, $v\in \partial f(x)$, and $w\in \partial f(y)$, then
\begin{equation}
    (v - w)\cdot (x - y) \geq 0.
\end{equation}

For a univariate convex function $g:[a,b]\to \R$,
its 
subgradient is the interval
\be
\partial g(t)=[g'_-(t),\,g'_+(t)],
\ee
and each of the functions $g'_-$ and $g'_+$ is monotone nondecreasing. More precisely, we have that
\be
\label{1d}
g'_-(t_1)\leq g'_+(t_1)\leq g'_-(t_2)\leq g'_+(t_2), \quad \hbox{for every}\,\, a<t_1<t_2<b.
\ee
Also $g'_-(t)$ has a limit at $t\to a^+$.  We define $g'_-(a)$ to be this limit (which could be $-\infty$).  We define $g'_+(b)$ (which could be $+\infty$) similarly.

We consider  two sub-classes of $\cC_{\rm conv}(\Omega)$ imposing the following  restrictions:
\begin{itemize}
\item 
Uniform bound on the functions: 
\be
\cB=\cB(\Omega):=\{f\in \cC_{\rm conv}(\Omega): \; |f(x)|\leq 1, \; x\in \Omega\}.
\ee
\item 
Uniform bound on the subgradient and the function: 
\be
\label{defL1}
\cL=\cL(\Omega):=\cB\cap\{f\in \cC_{\rm conv}(\Omega): \; \forall v\in \partial f(x),  \; x\in {\rm Int}(\Omega),\ {\rm we \ have}\  |v|\le 1 \}.
\ee
\end{itemize}
Obviously, one has 
$\cL\subsetneq\cB$. It is 
easily checked that the class $\cL$ can equivalently be defined by the  Lipschitz condition:
\be
\cL=\{f\in \cC_{\rm conv}(\Omega): |f(x)|\leq 1, \,\,\;  
|f(x)-f(y)|\leq  |x-y|, \; x,y\in \Omega\}.
\ee

\subsection{Outline of the paper}

The  main results in this paper provide matching upper and lower bounds (up to logarithmic factors) for the sampling numbers of the model classes $\cB$ and $\cL$. We also determine the rates of the
optimal recovery error when the data sites are restricted to be tensor product grids.
Our main finding is that, for both classes $\cB$ and $\cL$, the sampling rates cannot in general be achieved if one imposes either the restriction that the sampling grid is a tensor product grid, or the restriction that the recovery is given by a linear mapping $A$. Specifically, except when $p=1$ or $p=\infty$, the decay rates for the sampling numbers cannot be achieved using tensor product grids. Further, when $p = 2$, the decay rates of the linear sampling numbers are substantially worse than those of their nonlinear counterparts.

We stress that, while the classes $\cB$ and $\cL$ are
contained in classical
smoothness classes $\cK$, such as balls of $W^s(L_p)$ for certain ranges of $s$ and $p$, their
sampling numbers cannot generally be derived from these embeddings.

We begin this paper by studying the setting where the set of sample points is a tensor product grid. For the class $\cL$, we show in \S \ref{S:tensor} that, when $1\le p\le \infty$,
uniform tensor product grids $\Lambda_n$ with $n=\#(\Lambda_n)$, give the optimal recovery rate 
\be
\label{firstrate}
\rho(\cL,\Lambda_n)_{L_p}\asymp n^{-r/d},\quad n\ge 1.
\ee
where
\be
r:=r(p)=1+\frac 1 p.
\ee
This rate is also proved
to be optimal even when the competition is allowed to take place over all possible
tensor product grids (not necessarily uniform) consisting of $\asymp n$ data sites and can be achieved by a simple linear recovery method.
Our analysis is based on approximation error estimates for multilinear interpolation 
of convex functions, which seem to be new,  and could be of interest in numerical analysis. 
It follows from these results that one has
\be
\rho_n(\cL)_{L_p}\leq \rho_n^\ell(\cL)_{L_p} \lsim n^{-r/d}.
\label{upperrate}
\ee

We also establish lower bounds for the sampling numbers of $\cL$. In the univariate case, they 
match the upper bounds, 
and thus the rate \eref{firstrate} is optimal
\be
\rho_n(\cL)_{L_p}\asymp n^{-r/d}, \quad d=1, \quad 1\leq p\leq \infty.
\ee
When $d\geq 2$, the rate in \iref{firstrate} is also optimal for the sampling numbers when $p=1$ or $p=\infty$, namely,
\be
\rho_n(\cL)_{L_1}\asymp  \rho_n^\ell(\cL)_{L_1}\asymp n^{-2/d} \quad {\rm and} \quad \rho_n(\cL)_{L_\infty}\asymp  \rho_n^\ell(\cL)_{L_\infty}\asymp n^{-1/d}.
\ee
It is also optimal for the linear sampling numbers when $p=2$, that is,
\be
\rho_n^\ell(\cL)_{L_2}\asymp  n^{-\frac 3 {2d}}.
\ee
It turns out, as described below, that for other values of $p$, determining the asymptotic decay of the sampling numbers of $\cL$ is quite subtle and cannot be obtained by simple constructions using tensor
product grids or linear recovery.

For the larger class $\cB$, we show in \S \ref{S:TensorB} that, restricting to tensor product grids, the rates \eqref{firstrate} also hold for $1\leq p < \infty$, with the notable difference that the tensor product sampling grid which achieves this rate can no longer be taken as uniform, but must be chosen 
  with a grading that depends on $p$. Note that the sampling numbers of the class $\cB$ do not decay to zero in
the  case $p=\infty$.

We   turn next in the paper to a more refined study of the optimal sampling rate 
$\rho_n(\cL)_{L_p}$. The rather surprising result, proved
in \S \ref{S:optimal}, is that, when $1<p<\infty$,
by using specific sets $\Lambda_n^*$ of data sites that are not of tensor product type, 
one achieves substantially better rates 
than the optimal rates for tensor product grids. More precisely, up to logarithmic
factors, we obtain the optimal rate for the sampling numbers
\be
\rho_n(\cL)_{L_p} \asymp n^{-\t r/d},
\ee
where
\be
\t r:=\t r(p,d)=1+\min\Big\{\frac d p,1\Big\}=
\begin{cases}
2, &\,1\leq p\leq d,\\
1+\frac{d}{p}, &\,p\geq d.
\end{cases}
\ee
For $1<p<\infty$ and $d\geq 2$, this optimal rate thus improves on the rate of decay $n^{-r/d}$ achievable 
by tensor product grids, as well as on the rate of decay of the linear sampling numbers when $p=2$.
We extend these results to the larger class $\cB$ in \S \ref{S:optimalB}, reaching the same optimal rate of decay $n^{-\t r/d}$, except in the case $p=\infty$ where, as we have already mentioned, the sampling numbers do not converge to zero.

The proof of the optimal rates
in \S \ref{S:optimal} and \S \ref{S:optimalB}
is not fully constructive. We prove the existence of sampling sets $\Lambda^*_n$ of cardinality $n$ and of optimal recovery maps $A$ that achieve an error bound (up to logarithmic factors)
\be
E(\cK,\Lambda^*_n,A)_{L_p} \lesssim n^{-\t r/d},
\ee
when  $\cK=\cL$, or $\cK=\cB$. 
We present in \S \ref{S:OR} 
computationally feasible near optimal 
recovery maps $A_{\cL}$ and $A_{\cB}$
that are inherently nonlinear and achieve such error bounds. We conclude the paper 
with  \S 7, where some remarks 
involving widths, entropy numbers, and open questions are discussed.

In what follows, we will use 
$C$ to denote constants in our upper bounds,  and $c$ to denote constants in our lower bounds. These  constants depend only on the dimension $d$ (and sometimes also on $p$), and their values can change from formula to formula. We also use the notation 
$A\asymp B$ when $cA\leq B\leq CA$, or $B\lesssim A$, if $B\leq CA$.

\section{  Sampling on tensor product grids }
\label{S:tensor}

One of the most natural choices for the set $\Lambda$ of sampling sites 
is  a tensor product grid.
Suppose that $\Gamma_1,\dots,\Gamma_d$ are sets with  $m_1,\dots,m_d$   points in $[0,1]$, respectively.  Then, the tensor product grid 
\be 
\label{tpgrid}
\Lambda=\Lambda_n:=\Gamma_1\times\cdots\times \Gamma_d
\ee
is a set of $n=m_1\cdots m_d$ sampling sites from  $\Omega$.  A special case is the {\it uniform grid} 
 $\Lambda_n:=\Gamma_1\times\cdots\times \Gamma_d$ with $\Gamma_j:=\Gamma:=\{0,1/m,2/m,\dots,1\}$ for all $j=1,\dots,d$, and $n=(m+1)^d$.
 
We let $\cT_n$  denote the class of all tensor product grids consisting of at most $n$ sampling sites. 
In this section, we  analyze the performance of sampling on such tensor product grids for the recovery of functions in the model class $\cL$. 

\begin{remark} 
\label{remdecrease}
Note that when imposing tensor product grids, not all possible values of $n\geq 1$ appear in their cardinalities. Since 
the quantities $\rho_n(\cK)_X$ and $\rho_n^\ell(\cK)_X$ are obviously non-increasing,  for establishing an algebraic rate of decay $n^{-\alpha}$ on these quantities, it is enough to show this 
rate for all $n$ in a subsequence $(n_k)_{k\geq 1}$ such that $\frac {n_{k+1}}{n_k}$ is uniformly bounded. 
\end{remark}

 \subsection {Recovery of a function from a tensor product grid}

\label{SS:pwlinterpolation}
Suppose that $\Lambda$ is a set of data sites given by a tensor product grid of the form \eref{tpgrid}, and further let $\cK$ be a model class.  Given data consisting of the values of a  function $f\in \cK$ at these data sites,  we are interested in constructing a {\em practical} recovery map $A$ that achieves near optimal performance for $\cK$ given that $\Lambda$ has been fixed.  A natural choice for $A$ is to use the data observations to create a function $g$ that interpolates the data.  While there are many possibilities for such interpolants, we put forward in this section a form of piecewise multi-linear interpolation, which we go on to show is near optimal for tensor grid sampling whenever $\cK$ is one of our two model classes $\cL$ or $\cB$.

To precisely describe the multi-linear interpolation map that we will use, let us first consider the case $d=1$.  In this case the data sites are a set $\Lambda:=\{0=\xi_0<\xi_1<  \cdots <\xi_m = 1\}$ of $m+1$ points from the interval $[0,1]$ and our data consists of the values of a function $f$ at these data sites.  We define
\be 
\label{intd1}
I_\Lambda^1(f):\ [0,1]\mapsto \R,
\ee
to be the continuous piecewise linear function on $[0,1]$ which interpolates the values of $f$ at the data sites and has its only breakpoints at the data sites. Note that $I_\Lambda^1(f)$ is equal to
\be
    I_\Lambda^1(f) = \sum_{i=0}^m f(\xi_i)\ell_{\Lambda,i},
\ee
where $\ell_{\Lambda,i}$ are the unique piecewise-linear (hat) functions, with breakpoints only at the datasites $\Lambda$, such that $\ell_{\Lambda,i}(\xi_j) = \delta_{ij}$.

When $d>1$ and $\Lambda=\Gamma_1\times \cdots \times\Gamma_d$, where the $\Gamma_j := \{0 = \xi_0^i < \cdots < \xi^j_{m_j} = 1\}$ are one-dimensional data sites (potentially different for each $i$), we define the interpolating function $I_\Lambda^d(f)$ via
\be
    I_\Lambda^d(f) := \sum_{i_1=0}^{m_1} \cdots \sum_{i_d=0}^{m_d}f(\xi_{i_1},...,\xi_{i_d})\ell_{\Lambda,i_1,...,i_d},
\ee
where the Lagrange functions are given by the tensor product
\be
    \ell_{\Lambda,i_1,...,i_d}(x_1,...,x_d) := \prod_{j=1}^d\ell_{\Gamma_j,i_j}(x_j).
\ee
Note that the operator $I_\Lambda^d$ is simply the standard $d$-dimensional multi-linear interpolation operator on a tensor product grid.

We will use that these interpolants can also be characterized inductively as follows:
\begin{itemize}
    \item Suppose that $I_{\tilde \Lambda}^{d-1} (\tilde f)$ has been defined for every tensor product grid $\tilde \Lambda$ of $[0,1]^{d-1}$ and every function $\tilde f$ defined on $\Omega_{d-1}=[0,1]^{d-1}$.
    \item Let $\tilde \Lambda:=\Gamma_1\times\cdots \times \Gamma_{d-1}$, which is a tensor product grid for $[0,1]^{d-1}$. Note that $\Lambda = \tilde \Lambda \times \Gamma_d$, and 
\be 
\label{defId}
I^d_\Lambda(f)(x) = I^1_{\Gamma_d}\(I^{d-1}_{\tilde\Lambda}(f(\cdot,t))(\tilde x)\)(x_d), \quad x=(\tilde x,x_d),
\ee
where the one-dimensional interpolation $I_{\Gamma_d}^1$ is applied to the univariate function
\be
    t \rightarrow I^{d-1}_{\tilde\Lambda}(f(\cdot,t))(\tilde x).
\ee
\end{itemize}

 \subsection{Main results for sampling on tensor product grids for the model class $\cL$}
 \label{SS:maintp}

We recall the notation
\be
r=r(p):=1+\frac{1}{p}, \quad 1\leq p
\leq \infty.
\ee
Our main results in this section are given in  the following theorem.
\begin{theorem}
\label{T:tensorL}
For the model class $\cL=\cL(\Omega_d)$,  we have the following results :
\vskip .1in
\noindent
{\rm (i)} If  $\Lambda_n \subset \Omega_d$, $d\geq 1$, is a uniform tensor product grid and $n=(m+1)^d$,
 then for $1\le p\le\infty$,    we have
\be
\label{ubL1}
\rho(\cL, \Lambda_n)_{L_p}\le  \rho^\ell(\cL, \Lambda_n)_{L_p}\le Cn^{-r/d},
\ee
with the constant $C$ depending only on $d$.  It follows that for $1\leq p\leq \infty$ and for every $n\geq 1$, 
\be
\label{sampl}
\rho_n(\cL)_{L_p}\le  \rho_n^\ell(\cL)_{L_p}\le Cn^{-r/d}.
\ee
\vskip .1in
\noindent
{\rm (ii)} For any tensor product grid $\Lambda \in\cT_n\subset \Omega_d$, $d\geq 1$, any $1\le p\le \infty$, and any $n\ge1$, we have the lower bounds
\be
\label{orL1}
\rho^\ell (\cL,\Lambda)_{L_p} \ge \rho (\cL,\Lambda)_{L_p}\ge c  n^{-r/d}, 
\ee  
where $c$ depends only on $d$.

\vskip .1in
\noindent
{\rm (iii)}  In the case $d = 1$ and $1 \leq p\leq \infty$
we have 
\be
\label{orLgen}
\rho_n(\cL)_{L_p}\asymp  \rho^\ell_n(\cL)_{L_p}\asymp n^{-r/d}.
\ee
Also, the estimate \eqref{orLgen} holds if $d > 1$ and $p = 1,\infty$.

\vskip .1in
\noindent
{\rm (iv)} When  $p=2$ and all $d\geq 1$, one has
\be
\label{orL2}
\rho^\ell_n(\cL)_{L_2}\asymp n^{-r/d}=n^{-\frac 3{2d}}.
\ee
The equivalency constants depend only on $d$.

\vskip .1in
\noindent
{\rm (v)}
When  $1<p< \infty$ and $d>1$, one has 
\be
\label{lowLgen}
\rho_n(\cL)_{L_p}\geq c n^{-\t r/d}, \quad \t r:=\t r(p,d)=\min\{2,1+d/p\}<r,
\ee
where $c$ depends only on $d$.
\end{theorem}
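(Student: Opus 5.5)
The plan is to prove the five parts in order, each with a direct construction. For (i), I will use the multilinear interpolant $A(w)=I^d_{\Lambda_n}(f)$, which is linear in the data. The first step is a cellwise error estimate.

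On a grid cell $Q$ of side $h=1/m$, let $\sigma_Q(f)$ denote the oscillation of the subgradient of $f$ over $Q$, namely
\[
\sigma_Q(f):=\sup\{|v-w| : v\in\partial f(x),\ w\in\partial f(y),\ x,y\in Q\}.
\]
I will show that
\[
\|f-I^d_\Lambda f\|_{L_\infty(Q)}\le C\, h\,\min\{1,\sigma_Q(f)\}.
\]
This follows from the inductive representation \eref{defId}. Multilinear interpolation reproduces affine functions and is stable in $L_\infty$. Subtracting a supporting affine function at a point of $Q$ leaves a function whose oscillation on $Q$ is at most $h\sigma_Q$.

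Next I sum over the cells $Q$. Monotonicity of the subgradient gives a telescoping bound for every line of cells in each coordinate direction: the oscillations along such a line add up to at most $C$, since $|v|\le 1$. Summing over the $m^{d-1}$ lines, and noting that for each cell the oscillation of $\partial_j f$ is controlled by these line sums (up to boundary effects handled by using neighboring cells), I get
\[
\sum_Q \min\{1,\sigma_Q\}\le C m^{d-1}.
\]
Combined with $\min\{1,\sigma_Q\}^p\le \min\{1,\sigma_Q\}$, this yields
\[
\|f-If\|_p^p\le \sum_Q h^d h^p \min\{1,\sigma_Q\}\le C h^{p+d} m^{d-1}=C h^{p+1}.
\]
Hence the error is at most $Ch^{1+1/p}=Cn^{-r/d}$. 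The case $p=\infty$ is immediate from the cellwise bound.

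The step I expect to be the main obstacle is this cellwise oscillation bound, together with the claim that the oscillation of a full subgradient over a cell is controlled by one-dimensional monotone sums. Monotonicity only gives $(v-w)\cdot(x-y)\ge0$. I would first try to bound the directional oscillation of $\partial_j f$ along segments, and use convexity of the interpolation error restricted to edges and to the inductive lines, so that only univariate monotone quantities are needed.

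For (ii), I take an arbitrary grid $\Gamma_1\times\cdots\times\Gamma_d$ in $\cT_n$. Some coordinate, say the first, has $m_1\le n^{1/d}$ points, so there is a gap of length $\ge c n^{-1/d}$, and in fact $\ge c/m_1$ for roughly half of the gaps. I place the ridge function $\phi(x)=\epsilon\,\max(0,|x_1-t|-\delta)$ type bump. More simply, I use a univariate convex $g(x_1)$ that vanishes on $\Gamma_1$ outside one gap $J$ and is a Lipschitz tent of height $|J|$ inside it (convex piecewise linear with slopes $\pm1$, arranged as the difference between $|x_1-c|$ and its interpolant). Then $\pm$ such a function, after adding a common convex function, both lie in $\cL$ and share data. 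The separation is $\|g\|_p\asymp |J|^{1+1/p}$, which is at least $c n^{-r/d}$.

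For (iii) with $d=1$, every set of points is a tensor grid, so (ii) applies to all $\Lambda$ and matches (i). For $p=1$ and $p=\infty$ in higher dimensions I need lower bounds for general $\Lambda$. I place $\asymp n$ disjoint cubes of side $h\asymp n^{-1/d}$ free of samples, and on each use the fooling function $\max(0,|x-x_Q|-h/2)$-type perturbation; precisely, $f_\pm=\frac12|x-x_Q|$ versus its convex modification that agrees outside the ball $B(x_Q,h/2)$. For $p=\infty$ a single empty cube gives $c h$. For $p=1$, summing the $L_1$ defect $h^{d+1}$ over $\asymp n$ empty cubes of a paraboloid-type function $\frac{1}{2}|x|^2$, where each cut-off lowers $f$ by $h^2$ on a cube, gives $n h^{d+2}\asymp n^{-2/d}$.

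For (iv), linear recovery error dominates the Kolmogorov-type quantity. The standard argument for linear algorithms shows that their error on the symmetric hull bounds $d_n$ of $\cL-\cL$ in $L_2$, or rather a Gelfand width. I would lower bound it by choosing one direction and $n^{1/d}$-scale ridge functions $\sum_k \epsilon_k g_k(x_1)$: linear methods must handle the worst sign pattern, and in $L_2$ averaging over signs gives $(\sum_k |J_k|^{3})^{1/2}$ over the unsampled structure. Here I expect to need the convex hull of $\cL$ to contain such sign combinations with amplitude $n^{-1/d}\cdot$, which is the delicate point.

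For (v), I use the $n$ empty balls of radius $h\asymp n^{-1/d}$ with the paraboloid cut-off. One ball gives $L_p$ error $h^2 h^{d/p}$, and all balls together give $h^2$. Taking the maximum yields $n^{-\min\{2,1+d/p\}/d}$ up to constants, using a cone-shaped cut-off of Lipschitz $1$ for the single-ball case.
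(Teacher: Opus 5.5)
\paragraph{The genuine gap is in (iv).}
For $d=1$, part (iv) already follows from (iii), since $\rho^\ell_n\ge\rho_n$. The problem is $d\ge2$, and there a single-direction ridge construction cannot give $n^{-3/(2d)}$.
\begin{itemize}
\item Functions of $x_1$ alone form an isometric copy of $\cL(\Omega_1)$ inside $L_2(\Omega_d)$, with the same Kolmogorov widths, $\asymp n^{-3/2}$.
\item A family of only $\asymp n^{1/d}$ bumps spans a space of dimension at most $n$, so its $n$-width is zero.
\item The ``unsampled structure'' you rely on exists only for tensor grids. A general $\Lambda$ can have $n$ distinct, $1/n$-spaced $x_1$-coordinates, in which case every ridge function in $x_1$ is sampled at spacing $1/n$.
\item Any argument that does not genuinely exploit linearity (for example, two functions in $\cL$ with equal data) only bounds $\rho_n(\cL)_{L_2}$. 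By Theorem~\ref{T:further-improved}, that quantity is $\lesssim n^{-2/d}$ up to logarithms, which is much smaller than $n^{-3/(2d)}$.
\end{itemize}
The missing ingredients are the following.
\begin{itemize}
\item $\rho^\ell_n(\cL)_{L_2}\ge d_n(\cL)_{L_2}$, simply because the range of a linear map on $\R^n$ has dimension at most $n$. No Gelfand widths are needed.
\item $d_n$ is unchanged when passing to the closed symmetric convex hull of $\cL$.
\item That hull contains the hull $\cL_0$ of the hinges $\pm(\omega\cdot x+b)_+$, $|\omega|,|b|\le c$, taken over \emph{all} directions $\omega$.
\item The nontrivial width bound $d_n(\cL_0)_{L_2}\gtrsim n^{-3/(2d)}$ from Theorem 9 of \cite{SX}.
\end{itemize}
The multi-directional ridge structure is exactly what your construction misses.

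\paragraph{Part (i): viable route, unproven key step.}
Your route differs from the paper's and can be made to work. The cellwise bound $\|f-If\|_{L_\infty(Q)}\le Ch\min\{1,\sigma_Q\}$ is fine. However, $\sum_Q\min\{1,\sigma_Q\}\le Cm^{d-1}$ does not follow from telescoping along coordinate lines.
\begin{itemize}
\item Monotonicity controls the variation of $\partial_jf$ along each single $e_j$-line.
\item By contrast, $\sigma_Q$ is a supremum over the whole cell and over all components. It compares points on different lines, which brings in mixed second differences.
\end{itemize}
The bound itself is true, but it needs a genuinely multivariate convexity argument. One such argument: $\sigma_Q\lesssim h^{1-d}\,\Delta f(\widehat Q)$ for a fixed enlargement $\widehat Q\subset\Omega$; then use $\Delta f(\Omega)\le 2d$ and bound cells near $\partial\Omega$ trivially.

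The paper avoids cellwise oscillations altogether.
\begin{itemize}
\item It proves $\|f-I^d_hf\|_{L_1}\le 2dh^2$ by induction on $d$. Via \eref{defId}, the error splits into a univariate error in $x_d$, handled by Lemma~\ref{L:intlemma} and telescoping of $f'_\pm$ along each line, plus two $(d-1)$-variate face errors.
\item Lipschitz continuity gives $\|f-I^d_hf\|_{L_\infty}\le\sqrt d\,h$.
\item H\"older's inequality $\|e\|_{L_p}\le\|e\|_{L_1}^{1/p}\|e\|_{L_\infty}^{1-1/p}$ finishes.
\end{itemize}
This is the fallback you mention, and it uses only univariate monotonicity.

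\paragraph{Parts (ii), (iii), (v).}
These match the paper's constructions, up to a few slips.
\begin{itemize}
\item Your $g$ in (ii) is not convex. The pair you want is $|x_k-a^*|$ and $\max\{|x_k-a^*|,h/2\}$, i.e.\ $F\pm g/2$ with $F$ their average.
\item $\frac12|x|^2\notin\cL$ for $d\ge2$; use $\frac1{d+1}|x|^2$ instead.
\item The affine replacement of the paraboloid must be done on balls, not cubes. There it raises the function, rather than lowering it.
\end{itemize}
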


In other words, (i) and (ii) of this theorem show that  $n^{-r}$  is  the best recovery rate that one can obtain for the model class $\cL$ when one restricts the sampling sites to lie on a tensor product grid.  Moreover, this rate can be obtained using uniform sampling and a linear method of recovery.
This  obviously implies an upper bound $Cn^{-r}$ for the  rate of decay of $\rho_n(\cL)_{L_p}$ and $\rho^{\ell}_n(\cL)_{L_p}$ that is valid for all $d$ and $p$.   Item (iii) determines the asymptotic behavior
of $\rho_n(\cL)_{L_p}$, $n\ge 1$, but only in the special cases $d=1$ and $1\le p\le \infty$ and  for $d>1$ when  $p=1$ or $p=\infty$.  Item (iv) determines the asymptotic behavior of $\rho_n^\ell(\cL)_{L_2}$.  Item (v) gives a general lower bound $cn^{-\tilde r}$, $n\ge 1$, for $\rho_n(\cL)_{L_p}$ for all $1\le p\le \infty$.  Since $\tilde r>r$ this lower bound does not match the upper bound $Cn^{-r}$.  Later in this paper, we shall show that the upper bound for the decay rate can be improved to $n^{-\t r}$, save for logarithmic factors,  which in turn show that the correct decay rate is $n^{-\t r}$.  According to  Theorem \ref{T:tensorL}, this improved rate cannot be obtained using tensor product grids and according to (iv) cannot be obtained with a linear method of recovery.

\subsection{Upper bounds for uniform tensor product grids}
\label{SS:upper}

The upper bounds for the recovery error of  tensor grid sampling  given in Theorem \ref{T:tensorL}   are obtained by  uniform sampling and recovery using
multilinear interpolation as described in \S\ref{SS:pwlinterpolation}.  Therefore, in this section we restrict ourselves to analyzing this case.   We will  frequently use the following elementary lemma.
\begin{lemma}
\label{L:intlemma}
Let  $g: [a,b]\to \R$ be a univariate convex function and $\hat g$ be its linear interpolant at the endpoints of the interval, that is, $\hat g$ is the unique affine function such that 
$\hat g(a)=g(a)$ and $\hat g(b)=g(b)$. Then we have
\be\label{main}
\|g-\hat g\|_{L_p([a,b])}
\leq (b-a)^{1+1/p}(g'_-(b)-g'_+(a)),\quad 1\leq p\leq \infty.
\ee 
\end{lemma}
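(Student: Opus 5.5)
The plan is to prove the $L_\infty$ bound pointwise and then obtain every $L_p$ bound by integrating it over the interval. If $g'_-(b)=+\infty$ or $g'_+(a)=-\infty$ the right-hand side of \eref{main} is infinite and there is nothing to show, so assume both one-sided derivatives are finite. Then $g$ is Lipschitz on $[a,b]$ and absolutely continuous there, with a.e.\ derivative $g'$ lying in $[g'_+(a),g'_-(b)]$ by the monotonicity \eref{1d}.

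Write $h:=\hat g-g$. Since $g$ is convex and $\hat g$ is its chord, $h\ge 0$ on $[a,b]$ and $h(a)=h(b)=0$. The key pointwise estimate is
$$
0\le h(t)\le \min\{t-a,\,b-t\}\,\bigl(g'_-(b)-g'_+(a)\bigr),\qquad t\in[a,b].
$$
To derive it, let $s:=(g(b)-g(a))/(b-a)$ be the slope of $\hat g$; as an average of $g'$ it satisfies $g'_+(a)\le s\le g'_-(b)$. Then
$$
h(t)=\int_a^t \bigl(s-g'(\tau)\bigr)\,d\tau\le (t-a)\bigl(s-g'_+(a)\bigr)\le (t-a)\bigl(g'_-(b)-g'_+(a)\bigr).
$$
Integrating from $t$ to $b$ in the same way gives $h(t)=\int_t^b (g'(\tau)-s)\,d\tau\le (b-t)\bigl(g'_-(b)-g'_+(a)\bigr)$, and combining the two yields the displayed bound. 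Writing $D:=g'_-(b)-g'_+(a)$, this already gives $\|h\|_{L_\infty}\le \frac{b-a}{2}D\le (b-a)D$, which is \eref{main} for $p=\infty$.

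For $1\le p<\infty$ we integrate the pointwise bound:
$$
\int_a^b h^p\le D^p\int_a^b \min\{t-a,b-t\}^p\,dt=D^p\,\frac{2}{p+1}\Bigl(\frac{b-a}{2}\Bigr)^{p+1}\le D^p(b-a)^{p+1}.
$$
Taking $p$-th roots gives $\|h\|_{L_p([a,b])}\le (b-a)^{1+1/p}D$, which is \eref{main}. The constant obtained is in fact better than needed, so there is slack.

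The only point needing care is justifying $g'\in[g'_+(a),g'_-(b)]$ a.e.\ and the integral representation $g(t)-g(a)=\int_a^t g'$. Both follow from standard facts for univariate convex functions together with \eref{1d}. The endpoint derivatives are taken as the limits defined in the paper, and the infinite case has already been dispatched.
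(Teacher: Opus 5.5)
Your proof is correct, but it takes a different and somewhat heavier route than the paper. The paper never uses a.e.\ differentiability or the fundamental theorem of calculus. It sets $\tilde g=g-\hat g\le 0$ and observes that subtracting an affine function does not change the jump, so $\delta=\tilde g'_-(b)-\tilde g'_+(a)$. Since $\tilde g$ vanishes at both ends and is nonpositive, $\tilde g'_+(a)\le 0\le \tilde g'_-(b)$, and hence $0\le \tilde g'_-(b)\le\delta$. One supporting-line (slope monotonicity) inequality at $b$ then gives $0\ge \tilde g(x)\ge \delta(x-b)\ge-\delta(b-a)$. The $L_p$ bound is simply $(b-a)^{1/p}$ times this $L_\infty$ bound.

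Your representation $h(t)=\int_a^t (s-g')$ replaces that single convexity inequality by a Lipschitz/absolute-continuity argument. This is why you need the preliminary case split on infinite endpoint derivatives and the regularity discussion. In exchange, you bound $h$ from both endpoints, obtaining the tent profile $\min\{t-a,b-t\}D$ and better constants: $\tfrac12$ in $L_\infty$ and $\tfrac12 (p+1)^{-1/p}$ in $L_p$. This is consistent with the paper's remark that the constant $1$ is not sharp. The same two-sided bound is also available without any integration: apply the paper's supporting-line argument at $a$ as well as at $b$, using $-\delta\le \tilde g'_+(a)\le 0$.

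One point needs care. The step ``both endpoint derivatives are finite, hence $g$ is Lipschitz on $[a,b]$'' requires $g'_+(a)$ and $g'_-(b)$ to be the one-sided difference-quotient derivatives, for instance $g'_+(a)=\lim_{t\to a^+}\frac{g(t)-g(a)}{t-a}$. Your closing sentence instead invokes limits of interior one-sided derivatives. Under that reading, take $g(a)=1$ and $g\equiv 0$ on $(a,b]$: both limits are $0$, but $g$ is discontinuous at $a$, and the lemma itself fails for this $g$. The paper's proof also implicitly relies on the difference-quotient convention. So this is a matter of stating the convention explicitly, not a flaw in your argument.
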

\begin{proof}
The function $\t g:=g-\hat g$ is 
convex, negative and satisfies $\t g(a)=\t g(b)=0$.
Since 
\be
\delta:=(g'_-(b)-g'_+(a))
=(\tilde g'_-(b)-\tilde g'_+(a))\geq 0,
\ee
one has
\be
0\leq \tilde g'_-(b)\leq \delta
\quad{\rm and}
\quad -\delta\leq \tilde g'_+(a)\leq 0.
\ee
Using the first inequality, the convexity of $\tilde g$, and the fact that $\tilde g(b)=0$, we find that
\be
0\geq \tilde g(x)
\geq \delta(x-b)\geq \delta(a-b),
\quad 
x\in [a,b],
\ee
which gives the $L_\infty$ bound. The $L_p$ bound follows by integration. Note that the bound \eref{main} could be improved by a multiplicative constant smaller than $1$, where the optimal constant corresponds to the extreme case when $\t g$ is piecewise affine
with slope $-\delta/2$ on
$[a,(a+b)/2]$ and $\delta/2$
on $[(a+b)/2,b]$.
\end{proof}

We now consider the multidimensional case where $d\in\{2,3,\dots\}$ and $\Omega=\Omega_d=[0,1]^d$.  Any point $x=(x_1,\dots,x_d)\in \Omega$ can be written as $x=(\tilde x_d,x_d)$ where $\tilde x_d\in [0,1]^{d-1}$ is the vector of its first $d-1$ components.
 We define the uniform tensor product grid
 $\Lambda_n:=\Lambda_n(d):=\Gamma_1\times\cdots\times \Gamma_d$ with $\Gamma_j:=\Gamma:=\{0,1/m,2/m,\dots,1\}$, $j=1,\dots,d$, consisting of  $n=(m+1)^d$ points and denote by $T_k$ the cell
\be\label{t-k-definition}
T_k:=[k_1h,(k_1+1)h]\times\cdots\times [k_dh,(k_d+1)h], \quad k=(k_1,\dots,k_d)\in \cS_m^d,
\ee
where 
\be
h:=m^{-1}
\asymp n^{-1/d},
\label{hmn}
\ee
and where we use the notation
\be
\cS_m:=\{0,\dots,m-1\},
\ee
and thus 
$\cS_m^d:=\{0,\dots,m-1\}^d$.

If $k=(\t k_d,k_d)$, where 
$\t k_d:=(k_1,\dots,k_{d-1})\in \cS_m^{d-1}$ and $k_d\in \cS_m$,
we have 
\be
\quad T_k=T_{\t k_d}\times [k_dh,(k_d+1)h], \quad  k\in
\cS_m^d.
\ee

Given $f\in\cL$ and the point values  
  of $f$  at the  data sites in $\Lambda_n$, we consider the multi-linear interpolant 
\be
I_h^df:=I^d_{\Lambda_n}f:\Omega_d\to \R, 
\ee
defined  in \S\ref{SS:pwlinterpolation}.
Given the relation \iref{hmn} between $h$ and $n$, the upper bound \eref{ubL1} will follow directly 
by proving the following lemma.
\begin{lemma}
\label{L:Tub} Let $1\le p\le\infty$ and $d\geq 1$.  Then, whenever $f\in \cL(\Omega_d)$, we have  
\be
\|f-I_h^df\|_{L_p}\leq 2dh^{r}.
\label{estIh}
\ee
\end{lemma}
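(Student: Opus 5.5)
We will argue by induction on $d$, using the recursive characterization \eref{defId} of $I_h^d$ together with Lemma \ref{L:intlemma}. We aim at the slightly more precise statement $\|f-I_h^df\|_{L_p(\Omega_d)}\le 2d\,h^{r}$ for $f\in\cL(\Omega_d)$, where each coordinate direction contributes at most $2h^r$.

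\textbf{Base case $d=1$.} On each cell $[kh,(k+1)h]$, Lemma \ref{L:intlemma} gives
\[
\|f-I^1_hf\|_{L_p([kh,(k+1)h])}\le h^{r}\delta_k,\qquad \delta_k:=f'_-((k+1)h)-f'_+(kh)\ge 0 .
\]
By the monotonicity \eref{1d}, the $\delta_k$ telescope: $\sum_k\delta_k\le f'_-(1)-f'_+(0)\le 2$, since $|f'|\le 1$ for $f\in\cL$. For $p=\infty$ this gives at once the bound $\max_k\delta_k\, h\le 2h$. For $p<\infty$,
\[
\|f-I^1_hf\|_{L_p}^p\le h^{p+1}\sum_k\delta_k^p\le h^{p+1}\Bigl(\sum_k\delta_k\Bigr)^p\le 2^p h^{p+1},
\]
using $\ell_1\subset\ell_p$. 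Taking $p$-th roots yields $2h^{r}$. The key point is that the $\ell_p$ sum of the $\delta_k$ is controlled by their $\ell_1$ sum, which is bounded by $2$ independently of $h$.

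\textbf{Inductive step.} Write $x=(\tilde x,x_d)$ and set
\[
g(\tilde x,t):=I^{d-1}_{\tilde\Lambda}\bigl(f(\cdot,t)\bigr)(\tilde x),
\]
so that $I_h^df(x)=I^1_{\Gamma}\bigl(g(\tilde x,\cdot)\bigr)(x_d)$. We split
\[
f-I_h^df=\bigl(f-g\bigr)+\Bigl(g-I^1_\Gamma g\Bigr).
\]

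For the first term, each slice $f(\cdot,t)$ lies in $\cL(\Omega_{d-1})$. The induction hypothesis applied slice by slice, followed by integration in $t$ (or a supremum when $p=\infty$), gives $\|f-g\|_{L_p}\le 2(d-1)h^r$.

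For the second term we use that interpolation in the first $d-1$ variables is a positive averaging operator. For fixed $\tilde x$ in a cell $T_{\tilde k}$, $g(\tilde x,t)=\sum_\nu \lambda_\nu(\tilde x) f(\nu,t)$, where $\nu$ runs over the vertices of $T_{\tilde k}$, the weights satisfy $\lambda_\nu\ge0$ and $\sum_\nu\lambda_\nu=1$, and the weights do not depend on $t$. Hence $t\mapsto g(\tilde x,t)$ is a convex combination of the univariate functions $f(\nu,\cdot)$. It is therefore convex and $1$-Lipschitz, so $|g'|\le 1$. The one-dimensional base-case argument (Lemma \ref{L:intlemma} plus telescoping) applies for each fixed $\tilde x$, giving
\[
\|g(\tilde x,\cdot)-I^1_\Gamma g(\tilde x,\cdot)\|_{L_p([0,1])}\le 2h^r .
\]
Integrating over $\tilde x\in\Omega_{d-1}$, a set of measure $1$, gives the same bound $2h^r$ in $L_p(\Omega_d)$. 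Summing the two contributions gives $2dh^r$.

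\textbf{Main obstacle.} The delicate point is the second term: we need $g(\tilde x,\cdot)$ to remain convex with slopes bounded by $1$, even though $g$ itself is not jointly convex. This is exactly what the convex-combination representation provides, and with it the estimate is dimension-by-dimension routine.
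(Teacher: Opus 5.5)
Your proof is correct, but it is organized differently from the paper's. The paper proves only the endpoint cases $p=1$ and $p=\infty$, and then gets general $p$ from H\"older's inequality $\|e\|_{L_p}\le\|e\|_{L_1}^{1/p}\|e\|_{L_\infty}^{1-1/p}$. Its $L_\infty$ bound uses only the Lipschitz property, giving $\sqrt d\,h$ with no induction and no convexity.

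In its $L_1$ induction the paper splits in the opposite order from yours. Write $I^1_{x_d}$ and $I^{d-1}_{\tilde x}$ for interpolation in the last variable and in the first $d-1$ variables. The paper uses $f-I_h^df=(f-I^1_{x_d}f)+I^1_{x_d}\bigl(f-I^{d-1}_{\tilde x}f\bigr)$. The one-dimensional error $E_1$ is then always that of a genuine restriction $f(\tilde x,\cdot)\in\cL(\Omega_1)$. The $(d-1)$-dimensional errors $E_2,E_3$ appear only on the grid hyperplanes $x_d=k_dh$, weighted by $1-\theta$ and $\theta$, whose integrals $h/2$ are summed over $k_d$.

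You interpolate in $\tilde x$ first on every slice, so the $(d-1)$-dimensional term becomes a plain Fubini application of the induction hypothesis. The price is the extra observation that $t\mapsto I^{d-1}_{\tilde\Lambda}(f(\cdot,t))(\tilde x)$ stays convex and $1$-Lipschitz, because the multilinear weights are nonnegative, sum to one, and do not depend on $t$. Combined with the step $\sum_k\delta_k^p\le(\sum_k\delta_k)^p$ in the base case, this handles all $1\le p\le\infty$ in one pass, with the same constant $2d$ and no interpolation between norms.

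In exchange, the paper's route never has to check that partially interpolated functions remain in the class. It also shows exactly where convexity enters (only in the $L_1$ estimate, cf.\ Remark~\ref{R1}), and it gives a slightly better $L_\infty$ constant.
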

\begin{proof}
    We only need to show \eref{estIh} for $p=1$ and $p=\infty$, since the general case
then follows from the H\"older's inequality
\be
\|f-I_h^df\|_{L_p}\leq \|f-I_h^df\|_{L_1}^{1/p}\|f-I_h^df\|_{L_\infty}^{1-1/p}.
\label{inter}
\ee

We first treat the case $d=1$,
and then deal with the 
case $d>1$ by an induction argument. In 
the univariate  case, one has  
\be
T_i=[ih,(i+1)h], \quad i\in\cS_m
\ee
and $I_h^1f$ is the piecewise linear interpolant,  which on each $T_i$ coincides with the linear interpolant
of $f$ at the end points 
$ih$ and $(i+1)h$. Assuming $f\in \cL(\Omega_1)$, Lemma \ref{L:intlemma}
says that on each interval $T_i$,
\be
\label{L1d1}
\|f-I_h^1f\|_{L_\infty(T_i)}
\leq   h(f'_-((i+1)h)-f'_+(ih))
\leq 2h.
\ee
Maximizing  over $i$, results in \iref{estIh}
for the value of  $p=\infty$. 

We next turn to bounding the error in $L_1$. Lemma \ref{L:intlemma}
also gives 
\be
\|f-I_h^1f\|_{L_1(T_i)}
\leq   h^2(f'_-((i+1)h)-f'_+(ih)),
\ee
and after summing  over $i$ and using the convexity of $f$, see \eref{1d}, we  obtain that
\be
\|f-I_h^1f\|_{L_1(\Omega_1)}
\leq h^2(f'_-(1)-f'_+(0)) \leq 2h^2,
\ee
which is \iref{estIh}
in the case $p=1$. Due to \eref{inter}, estimate \eref{estIh} is proven in the case $d=1$.

We turn now to the general case $d>1$.  We first  observe that  the restriction of a
multivariate convex function $f\in \cL(\Omega_d)$, obtained by freezing some of its  coordinates, 
remains a convex function of the rest of the  variables. Moreover,  at every point,  the restricted $f$ has all its 
subgradients  bounded by $1$. In particular, when we fix the first $(d-1)$ coordinates, the univariate function $t\mapsto g(t):=f(x_1, \ldots,x_{d-1},t)$ belongs to $\cL(\Omega_1)$. Let us assume that  \eref{estIh} holds for dimension $(d-1)$ and $1\le p\le \infty$ and seek to establish it for dimension $d$.
\nl
\nl
{\bf Case $p=1$:}   For $k=(k_1,\dots,k_d)\in \cS_m^d$, consider the cell
 $T_k$
and  let $x=(\t x_d,x_d)\in T_k$. We define
\be
\theta(x_d):=h^{-1}(x_d-k_dh)\in [0,1], \quad {\rm when}\quad x_d\in [k_dh, (k_d+1)h].
\ee
Using the inductive definition of $I^d_h f$, we can bound the interpolation error by
$$
\begin{array}{ll}
|f(x)-I^d_hf (x)|
& = |f(x)-(1-\theta(x_d)) [I^{d-1}_{h} f(\cdot,k_dh)](\t x_d)-\theta(x_d)
[I^{d-1}_{h} f(\cdot,(k_d+1)h)](\t x_d)| \\\\
& \leq  E_1(x)+E_2(x)+E_3(x).\\
\end{array}
$$
Here, the first error term $E_1$, is defined for $x=(\t x_d,x_d)\in  T_{\t k_d}\times [k_dh, (k_d+1)h]$ by
\be
E_1(x):=|f(\t x_d,x_d)-(1-\theta(x_d)) f(\t x_d,k_dh) -\theta(x_d) f(\t x_d,(k_d+1)h)|.
\ee
 It is the one dimensional interpolation error in the direction $x_d$ when using the exact values of $f$ on the faces $T_{\tilde k_d}$. The other two error terms  are
\be
E_2(x):=(1-\theta(x_d))|f(\t x_d,k_dh)-I^{d-1}_{h} [f(\cdot,k_dh)](\t x_d)|,
\ee
and
\be
E_3(x):=\theta(x_d) |f(\t x_d,(k_d+1)h)- I^{d-1}_{h} [f(\cdot,(k_d+1)h)](\t x_d)|.
\ee
They account for the error of $(d-1)$ dimensional interpolation of the 
convex functions $f(\cdot,k_dh)$ and 
$f(\cdot,(k_d+1)h)$, respectively.
In summary, we have
\be
\label{zero}
\|f-I_h^df\|_{L_1}\leq 
\sum_{k\in \cS_m^d} \int_{T_k}E_1
+\sum_{k\in \cS_m^d} \int_{T_k}E_2+\sum_{k\in \cS_m^d} \int_{T_k}E_3 = :\Sigma_1+\Sigma_2+\Sigma_3.
\ee
 We now bound the three sums.  For the first sum, we write
\be
\Sigma_1:=\sum_{k\in \cS_m^d}\int_{T_k} E_1(x)dx=\sum_{k\in \cS_m^d}\int_{ T_{\t k_d}} \left [\int_{k_dh}^{(k_d+1)h}
 E_1(\t x_d,x_d)dx_d\right ] d\t x_d,
\ee
and observe that for each fixed $\t x_d\in T_{\t k_d}$, the univariate integral is the $L_1$ error of  interpolation of the univariate function $f(\t x_d,\cdot)\in \cL(\Omega_1)$ on $[k_dh,(k_d+1)h]$. We first  sum  over $k_d\in \cS_m$ for fixed $\t k_d$ and then over all $\t k_ d\in \cS_m^{d-1}$, 
which gives
\be
\label{zeroone}
\Sigma_1=\sum_{\t k_d\in \cS_m^{d-1}}
\,\,\int_{T_{\t k_d}}
\| f(\t x_d,\cdot)-I_h^1 f(\t x_d,\cdot)\|_{L_1(\Omega_1)}
d\t x_d
\leq 2h^2,
\ee
where we have used the result 
for $d=1$.

Next, consider the second sum $\Sigma_2$.
Each summand in $\Sigma_2$ is
\be\label{one}
\int_{T_k} E_2(x)dx=\(\int_{k_dh}^{(k_d+1)h}(1-\theta(x_d))dx_d\)
\(\int_{T_{\t k_d}} 
 |f(\t x_d,k_dh)-[ I^{d-1}_{h} f(\cdot,k_dh)](\t x_d)|d\t x_d\).
\ee
The first factor has value $\frac h 2$ and the second factor is the $L_1$ error of interpolation   of the $(d-1)$ dimensional function $f(\cdot,k_dh)$
on $T_{\t k_d}$. 
Similarly to what  we did in the estimate for $\Sigma_1$ and using \eref{one}, we have 
\begin{eqnarray}
\nonumber
\Sigma_2&:=&\sum_{k\in \cS_m^d}\int_{T_k} E_2(x)dx=\sum_{k_d\in \cS_m}\frac{h}{2}\sum_{\t k_d\in \cS_m^{d-1}}
\,\int_{T_{\t k_d}}|f(\t x_d,k_dh)-[I^{d-1}_{h} f(\cdot,k_dh)](\t x_d)|d\t x_d\\ \nonumber&=&\sum_{k_d\in \cS_m}\frac{h}{2}\|f(\cdot,k_dh)-I^{d-1}_hf(\cdot,k_dh)\|_{L_1(\Omega_{d-1})}.
\end{eqnarray}
Note that $f(\cdot,k_dh)$
belongs to $\cL(\Omega_{d-1})$. By the induction hypothesis, we have
\be\label{oneone}
\Sigma_2\leq \sum_{k_d\in\cS_m}\frac{h}{2}\cdot 2(d-1)h^2=(d-1)h^2.
\ee

The estimate for the third sum  $\Sigma_3$ in \eref{zero} is the same as \eref{oneone}, this time applying the inductive hypothesis for the $(d-1)$ dimensional function 
$f(\cdot,(k_d+1)h)$, namely,
\be
\label{two}
\Sigma_3:=\sum_{k\in\cS_m^d}\int_{T_k} E_3(x)dx\leq (d-1)h^2.
\ee
It follows from \eref{zero}, \eref{zeroone}, \eref{oneone}, and \eref{two} that 
\be
\|f-I_h^df\|_{L_1}\leq 2h^2+2(d-1)h^2=2dh^2,
\ee
which concludes the proof in the case $p=1$.
\nl
\nl
{\bf Case $p=\infty$:}  In this case, we only use the fact that the functions from $\cL$ are Lipschitz functions and do not utilize the fact that they are convex. We could therefore invoke classical results on multivariate spline approximation of functions such as \cite{dahmen1980multidimensional} where general estimates are stated in terms of moduli of smoothness, but we present an elementary proof for completeness.

For each $k\in \cS_m^d$, let $V_k$ denote the set of vertices of $T_k$.  Let us observe that the value $I^d_hf(x)$,   $x\in T_k$,  is a convex combination of the values of $f$ at these vertices, namely
\be
I_h^df(x)=\sum_{v\in V_k}\alpha_v(x)f(v), \quad \sum_{v\in V_k}\alpha_v(x)=1, \quad \alpha_v(x)\geq 0.
\ee
Since both $f$ and $I_h^df$ are continuous, there is $\bar x\in T_k$ for some index $k$, such that 
\begin{eqnarray}
\nonumber
\|f-I_h^df\|_{L_\infty}&=&
|f(\bar x)-I_hf(\bar x)|\leq 
\sum_{v\in V_k}\alpha_v(x)|f(\bar x)-f(v)|\\ \nonumber
&\leq& 
\sum_{v\in V_k}\alpha_v(x)|\bar x-v|
\leq \sqrt{d}h,
\end{eqnarray}
where we have used  
$|\bar x-v|\leq \sqrt{d}h$ whenever $v,\bar x\in T_k$.
We  can rewrite the estimate above as
\be
\|f-I_h^df\|_{L_\infty}
\leq  2dh,
\ee
so that we have the same constant as in the case $p=1$. The result for $1<p<\infty$ then follows from \eref{inter}.  This completes the proof of the Lemma.  
\end{proof}

\begin{remark} 
\label{R1}
Note that if we consider only Lipschitz functions with a Lipschitz constant $1$, we can obtain the upper bound $\|f-I_h^df\|_{L_\infty}\leq Ch$, with $C$ only depending on $d$, and   arrive at the known result 
$$
\|f-I_h^df\|_{L_p}
\leq  Ch.
$$
Observe that the gain of the extra factor $h^{1/p}$ in the error bound for functions from the class $\cL$ is due to the convexity of $f$, exploited in the $L_1$ estimate, where we gain an extra factor $h$.
\end{remark}

 Lemma \ref{L:Tub} serves to prove the upper bounds on tensor product sampling stated in Theorem \ref{T:tensorL}.  Namely, we have  proved \eref{ubL1}, from which \eref{sampl} follows, see \eref{lr}, \eref{serror}, and \eref{lerror}, for all values
$n=(m+1)^d$, and therefore for all $n\geq 1$,  according to Remark \ref{remdecrease}.

\subsection{Lower bounds for tensor product grids: proof of (ii) in Theorem \ref{T:tensorL}}
\label{SS:lb}

 Let $\Lambda:=\Gamma_1\times\cdots \times\Gamma_d$ be any tensor product grid
 in $\cT_n$.  Since $m_1\cdots m_d=n$, at least one of the sets $\Gamma_k$ consists of 
 $m:=m_k\le n^{1/d}$ points,  and therefore there is an interval $(a,b)\subset [0,1]$ of length 
 at least $h:=(m+1)^{-1}$  that does not contain any points from  $\Gamma_k$. Let $a^*:=(a+b)/2$ be the midpoint of this interval and  define  the functions $\pi,\tilde \pi:\Omega_d\to \R$, as follows
 $$
\pi(x):= \pi(x_1,\dots,x_d):=|x_k-a^*|,\quad {\rm and}\quad \tilde \pi(x):= \max\Big\{\frac h 2,\pi(x)\Big\}, \quad x=(x_1,\dots,x_d)\in\Omega.
 $$  
 Both of these functions are in $\cL$ and their values agree at all data sites.  We also have that the difference $E:=\tilde\pi-\pi$ satisfies
 \be 
 \label{havelb}
 E(x)\ge \frac h 4, \quad x\in\tilde\Omega:=\Big\{x\in\Omega:  |x_k-a^*|\le \frac h 4\Big\}.
 \ee
 Since $|\tilde \Omega|= \frac h 2$, this gives that
 \be 
 \label{lbE}
 \rho(\cL,\Lambda)_{L_p}\ge \frac{1}{2}\|E\|_{L_p}\ge \frac{1}{2}\(\frac h 2\)^{\frac 1 p}\frac h 4 \geq cm^{-(1+\frac 1 p)} \geq cn^{-r/d},
\ee
where the constant $c$ depends only on $d$. This proves the
lower bound \eref{orL1} in (ii).
 
\subsection {Lower bounds for  sampling numbers of $\cL$: proof of (iii), (iv), and (v) in Theorem \ref{T:tensorL}}
\label{proofiii}
 
 The upper bounds in (iii), (iv), and (v) of Theorem \ref{T:tensorL} follow from what we have already proved for tensor product grids. To prove the lower bounds, let $\Lambda$ be any set of $n$ sampling sites (not necessarily on a tensor grid). 
 
 We first consider the case $d=1$ and observe that there
 exists an interval $(a,b)\subset [0,1]$ of length at least $h:=(n+1)^{-1}$ 
 that does not contain any point from  $\Lambda$. We then proceed as
 in the proof of (ii), by introducing the functions $\pi$ and $\t \pi$
 (that are now univariate, i.e. $d=1$), and noting that \eref{lbE} becomes 
\be 
\label{lbEuniv}
\rho(\cL(\Omega_1),\Lambda)_{L_p}\ge   cn^{-r}.
\ee
Therefore, since $\Lambda$ is arbitrary,
\be 
\label{samploweruniv}
\rho_n(\cL(\Omega_1))_{L_p}\ge   cn^{-r},
\ee
for all $1\leq p\leq \infty$, which is the lower bound in \eref{orLgen} when $d=1$.

Next, we consider the  case $d> 1$ and  $p=\infty$. 
By standard packing arguments, there exists a point $x^*\in\Omega$ and a positive constant $c$ such that all the points in $\Lambda$ are outside the ball $B$
with center $x^*$ and radius $\delta= cn^{-1/d}$,  and this ball is completely contained in $\Omega_d$. The constant $c$ 
depends only on $d$. We then define  two functions $\phi,\tilde \phi:\Omega_d\to\R$ as follows
 \be
\phi(x):=\frac 1 {\sqrt d}|x-x^*|,\quad {\rm and}\quad \tilde \phi(x):= \frac 1 {\sqrt d}\max \{\delta,\phi(x)\}, \quad x\in\Omega_d.
\label{phis}
 \ee
Note that  $\phi,\tilde \phi\in\cL$ and their values are in agreement at all data sites.  Moreover, their difference satisfies
 \be 
 \label{havelb1}
 \|\tilde\phi-\phi\|_{L_\infty(\Omega_d)}\ge \tilde\phi(x^*)-\phi(x^*)=\frac{\delta}{\sqrt d}.
 \ee
 It follows that
 \be 
 \rho(\cL(\Omega_d),\Lambda)_{L_\infty}\ge \frac{\delta}{2\sqrt d} \geq cn^{-1/d},
\ee
where $c$ depends only on $d$. Since $\Lambda$ is  an arbitrary set of data sites, this proves the 
lower bound 
\be 
\label{samplowerLinf}
\rho_n(\cL(\Omega_d))_{L_\infty}\ge   cn^{-1/d},
\ee
which is the lower bound in \eref{orLgen} when $p=\infty$.

Next, we consider the case $d>1$ and $p=1$. In this case, we follow an approach already discussed in  \cite{KATSCHER19965}, where we chop off (while preserving the convexity) parts of a particular  function that are away from the given data sites. More precisely, we consider the function $\psi:\Omega_d\to\R$, defined as
\be
\psi(x):=\frac 1 {d+1} |x|^2=\frac 1 {d+1} (x_1^2+\cdots + x_d^2),
\label{f}
\ee
which belongs to $\cL$. Let $\Lambda$  be any set of $n$ sample sites in  $\Omega_d$. 
For an appropriate constant $c$ that depends only on $d$, we can always find
$2n$ points $y^1,\cdots,y^{2n}$ such that the balls $B(y^i,h)$  with center $y^i$ and radius 
$$
h=cn^{-1/d}
$$
are all disjoint and contained in 
$\Omega_d$. It follows that there exist $n$ points $y^i$ 
such that the balls $B(y^i,h)$ do not contain any points from $\Lambda
$.
Up to re-indexing we denote these points by $y^1,\dots,y^n$.

We will construct a function $\t \psi:\Omega\to\R$, $\t \psi\in \cL$, that differs from $\psi$ for  $x\in B(y^i,h)$ for $i=1,\dots,n$, and agrees with $\psi$ in the rest of $\Omega$, and thus at all data sites in $\Lambda$.  We notice that when $x$ is  on the boundary of $B(y^i,h)$ we have
$$
(d+1)\psi(x)=|x|^2=|y^i+x-y^i|^2=
(|x-y^i|^2 -|y^i|^2 + 2y^i\cdot x)=
(h^2-|y^i|^2 + 2y^i\cdot x),
$$
and therefore $\psi$ agrees on this boundary
with the affine function
$$
\psi_i(x)=a_i+\<b_i,x\>, \quad a_i=\frac 1{d+1}(h^2 -|y^i|^2),\quad
b_i= \frac 2{d+1}y^i.
$$
We define $\t \psi$ by
\be
\t \psi(x):=\begin{cases}
\psi_i(x),& \quad x\in B(y^i,h),\quad i=1,\dots,n,\\\\
\psi(x),&\quad x\in \Omega\setminus\cup_{i=1}^nB(y^i,h).
\end{cases}
\label{tf}
\ee
 By construction, $\t \psi$ is convex
and  belongs to $\cL$. For 
$x\in B(y^i,h)$, the difference between $\t \psi$ and $\psi$ is
$$
\t \psi(x)-\psi(x)=\psi_i(x)-\psi(x)=
\frac 1{d+1}(h^2-|x-y^i|^2) \geq 0.
$$
Thus, this difference is larger than $\frac{3}{4}\frac{h^2}{d+1}$ on the ball $B(y^i,h/2)$ for each $i=1,\dots,n$.   It follows that
\be
\label{L1error}
\|\t \psi-\psi\|_{L_1(\Omega_d)}=\int_\Omega (\t \psi(x)-\psi(x))dx \ge ch^2\ge cn^{-2/d},
\ee
where $c$ depends only on $d$. It follows that
\be
\label{lbp1}
\rho(\cL,\Lambda)_{L_1}\geq 
 c n^{-2/d}.
\ee 
Since $\Lambda$ is arbitrary, this shows that
\be 
\label{samplowerL1}
\rho_n(\cL)_{L_1}\ge cn^{-2/d},
\ee
which is the lower bound in \eref{orLgen} when $p=1$.

Next, we prove (v). We use the functions $\phi,\tilde \phi, \psi,\tilde\psi$ defined above. For each $1< p< \infty$, we 
know that
\be
\|\phi-\t \phi\|_{L_p} \geq c n^{-(\frac 1 d+\frac 1 p)},
\ee
because $|\phi -\tilde \phi| \ge cn^{-1/d}$ on a set of measure $cn^{-1}$.
We also know that 
 
\be
\|\psi-\t \psi\|_{L_p} \geq \|\psi-\t \psi\|_{L_1} \ge c n^{-2/d}.
\ee
This shows that a lower bound for the sampling numbers $\rho_n(\cL)_{L_p}$ is given 
by $c\max\{n^{-(\frac 1 d+\frac 1 p)},n^{-2/d}\}$, which is exactly \eref{lowLgen}.

Finally, we prove \eref{orL2} of (iv).
The upper bound is a consequence of (i). In order to obtain the  lower bound, we use the fact that $\rho_n^\ell(\cL)_{L_2}\geq d_n(\cL)_{L_2}$
and argue that the Kolmogorov widths, see \eqref{d:kol}, satisfy
\be
\label{kol2}
d_n(\cL)_{L_2} \asymp n^{-\frac{3}{2d}}.
\ee
We  only need to consider the case $d \geq 2$ since the one-dimensional result was proved in \cite{Konovalov2005}, see \eqref{gen}. In the case $d\geq 2$, for $\omega\in \mathbb{R}^d$ and $b\in \mathbb{R}$, we consider the ``hinge'' function:
\be
    \eta_{\omega,b}(x) := (\omega\cdot x+b)_+=\max\{\omega\cdot x+b,0\}, \quad x\in\Omega_d.
\ee
Note that for a dimension dependent constant $c$,  we have that whenever  $|\omega|,|b| \leq c$, the function $\eta_{\omega,b}\in \cL$. Hence, if we define
\be
    \cL_0 := \overline{\text{conv}\{\pm \,\eta_{\omega,b},~|\omega|,|b|\leq c\}}
\ee
to be the closure of the convex hull of $\pm \,\eta_{\omega,b}$, it follows that $\cL_0\subset \text{conv}\{-\cL\cup\cL\}$. On the other hand, by the triangle inequality,
one has
\be
d_n(\overline{\text{conv}\{-\cL\cup\cL\}})_{L_2}=d_n(\cL)_{L_2}.
\ee
Therefore, we have  $d_n(\cL)_{L_2} \geq d_n(\cL_0)_{L_2}$. We can now apply Theorem 9 from \cite{SX}, where it was shown that 
\be
d_n(\cL_0)_{L_2} \gtrsim n^{-\frac{3}{2d}}.
\ee
This completes the proof of  Theorem \ref{T:tensorL}.

\section{Optimal tensor product sampling for $\cB$}
\label{S:TensorB}

In this section, we prove the following result, again using the notation $r:=1+\frac 1 p$.

\begin{theorem}
\label{T:tensorB}
For the model class $\cB=\cB(\Omega_d)$,  we have the following results:
\vskip .1in
\noindent
{\rm (i)} For each $1\le p< \infty$, and $d\geq 1$, there is a non-uniform 
tensor product grid $\Lambda_n \in\cT_n$, depending on $p$, such that, 
\be
\label{ubL11}
\rho(\cB, \Lambda_n)_{L_p}\le  \rho^\ell(\cB, \Lambda_n)_{L_p}\le Cn^{-r/d},
\ee
with the constant $C$ depending only on $d$ and $p$. It follows that for $1\le p<\infty$ and every $n\geq 1$, 
\be
\label{sampb}
\rho_n(\cB)_{L_p}\le  \rho_n^\ell(\cB)_{L_p}\le Cn^{-r/d}.
\ee
\vskip .1in
\noindent
{\rm (ii)} For any tensor product grid $\Lambda \in\cT_n\subset \Omega_d$, $d\geq 1$,  any $1\le p< \infty$, and any $n\ge1$, we have the lower bounds
\be
\label{orL11}
\rho^\ell (\cB,\Lambda)_{L_p} \ge \rho (\cB,\Lambda)_{L_p}\ge c  n^{-r/d}, 
\ee  
where $c$ depends only on $d$.
\vskip .1in
\noindent
{\rm (iii)} In the case $d = 1$ and $1 \leq p< \infty$
we have
\be
\label{orBgen}
\rho_n(\cB)_{L_p}\asymp  \rho^\ell_n(\cB)_{L_p}\asymp n^{-r/d},
\ee
where the equivalency constants depend only on $d$ and $p$.
Also, the estimate \eqref{orBgen} holds if $d > 1$ and $p = 1$.

\vskip .1in
\noindent
{\rm (iv)} 
When  $p=2$ and  $d\geq 1$, one has
\be
\label{orL2B}
\rho^\ell_n(\cB)_{L_2}\asymp n^{-r/d}=n^{-\frac 3{2d}},
\ee
where the equivalency constants depend only on $d$.
\vskip .1in
\noindent
{\rm (v)}
When  $1<p< \infty$ and $d>1$, one has 
\be
\label{lowBgen}
\rho_n(\cB)_{L_p}\geq c n^{-\t r/d}, \quad \t r:=\min\{2,1+d/p\}<r,
\ee
where $c$ depends only on $d$.
\end{theorem}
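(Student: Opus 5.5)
The lower bounds in (ii)--(v) will come for free from Theorem \ref{T:tensorL}. Since $\cL\subset\cB$, we have $\rho(\cB,\Lambda)_{L_p}\ge\rho(\cL,\Lambda)_{L_p}$ and likewise for the linear versions, sampling numbers and Kolmogorov widths. Hence \eref{orL1}, the lower bounds in \eref{orLgen} for $p<\infty$, \eref{kol2} and \eref{lowLgen} transfer directly to $\cB$. The upper bounds in (iii) and (iv) then follow from (i), together with Remark \ref{remdecrease} to pass from the grid cardinalities to all $n$. So all the work is in (i): constructing a graded tensor product grid and proving a multilinear interpolation estimate for $f\in\cB$.

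For (i) I will first treat $d=1$. For a convex $f$ with $|f|\le1$ on $[0,1]$, convexity gives slope bounds that blow up near the endpoints: for $t\in(0,1)$,
\[
|f'_\pm(t)|\le \frac{2}{\min\{t,1-t\}}.
\]
This follows by comparing difference quotients with the endpoint values, which are bounded by $1$. I will take a grid symmetric about $1/2$ with points $\xi_i=\frac12(i/m)^{\beta}$ on $[0,1/2]$, with $\beta$ depending on $p$. On a cell $[a,b]$ not touching an endpoint, Lemma \ref{L:intlemma} gives the error $(b-a)^{r}(f'_-(b)-f'_+(a))$. On the two end cells I will instead use the trivial bound $|f-\hat f|\le 2$, which contributes at most $2\,\xi_1^{1/p}$. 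Summing the $p$-th powers over the interior cells, I will bound $\sum_i (b_i-a_i)^{p+1}\delta_i^p$ by $\max_i\big((b_i-a_i)^{p+1}\delta_i^{p-1}\big)\sum_i\delta_i$. Here $\delta_i\lesssim 1/a_i$, and $\sum_i\delta_i$ is a telescoping total variation of $f'$ over the interior, which is $\lesssim 1/\xi_1$. The exponent $\beta$ is chosen so that the local errors are equidistributed at level $m^{-(p+1)}$ while $\xi_1^{1/p}\lesssim m^{-r}$. I expect $\beta$ of order $p+1$ (i.e.\ $\xi_1\asymp m^{-(p+1)}$) to work after checking exponents. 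This requires $p<\infty$, consistent with the statement.

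For $d>1$ I will use the product grid $\Gamma\times\cdots\times\Gamma$ with this graded $\Gamma$. I will first prove the $L_p$ estimate directly, since the H\"older interpolation trick from Lemma \ref{L:Tub} is unavailable without an $L_\infty$ bound. Following the inductive decomposition $E_1+E_2+E_3$ of Lemma \ref{L:Tub}, I will work with $p$-th powers. The term $E_1$ is the univariate error along lines $x_d\mapsto f(\tilde x_d,x_d)$, which are convex and bounded by $1$; integrating the $d=1$ bound over $\tilde x_d$ gives $Cm^{-rp}$. The terms $E_2,E_3$ involve traces $f(\cdot,\xi_k)\in\cB(\Omega_{d-1})$ weighted by the cell widths in $x_d$. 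Since $\sum_k(\xi_{k+1}-\xi_k)=1$, the induction hypothesis applied to these traces gives $C(d-1)m^{-rp}$. The bound $\|f-I^d f\|_{L_p}^p\le C d\, m^{-rp}$ then follows, and $m\asymp n^{1/d}$ yields $n^{-r/d}$. Since multilinear interpolation is linear, this bounds $\rho^\ell$ as well.

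The main obstacle is the univariate graded estimate: making the bookkeeping near the boundary rigorous. This means getting the end-cell contribution, the blow-up of the slope bound, and the telescoping sum right simultaneously, with constants depending only on $p$. If the max-times-sum device loses too much, my fallback is a dyadic decomposition of $[0,1/2]$ into layers $[2^{-j-1},2^{-j}]$. On each layer the slopes are bounded by $2^{j+2}$, and I will apply the rescaled $\cL$-type estimate of Lemma \ref{L:Tub} in each layer with the number of points allocated optimally across layers.
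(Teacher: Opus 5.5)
Your reduction of (ii)--(v) to Theorem~\ref{T:tensorL} via $\cL\subset\cB$, and your $d>1$ induction with $E_1,E_2,E_3$ in $p$-th powers, are exactly the paper's argument. The gap is in your main univariate device. Bounding $\sum_i(b_i-a_i)^{p+1}\delta_i^p$ by $\max_i\big((b_i-a_i)^{p+1}\delta_i^{p-1}\big)\sum_i\delta_i$ cannot give the rate for any grading exponent. With $\xi_i=\frac12(i/m)^{\beta}$ you have $b_i-a_i\asymp i^{\beta-1}m^{-\beta}$ and $\delta_i\lesssim 1/a_i\asymp m^{\beta}i^{-\beta}$. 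So the maximum is of order $\max_i i^{2\beta-p-1}m^{-2\beta}$, while $\sum_i\delta_i$ is only bounded by $1/\xi_1\asymp m^{\beta}$. For $\beta=p+1$, which the end cell forces, the maximum is $\asymp m^{-(p+1)}$, attained at cells near $1/2$. The product is then $O(1)$ rather than $O(m^{-(p+1)})$. For other $\beta$ you get $m^{\beta-(p+1)}$ or $m^{-\beta}$, never enough. The device pairs the largest weights (near $1/2$) with a variation of $f'$ that lives near $\xi_1$, where the weights are tiny. The paper keeps the weights attached. It bounds the sum by $m^{-(p+1)^2}\big(\sum_k k^{p+1}\delta_k\big)^p$ and proves $S=\sum_k k^{p+1}\delta_k\lesssim m^{p+1}$ by summation by parts. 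The interior term is controlled through $k^pm^{-p-1}|f'_+(t_k)|\asymp|T_{k-1}|\,|f'_+(t_k)|\le f(t_{k-1})-f(t_k)$ on the range where $f'_+<0$, which telescopes to at most $2$ because $|f|\le 1$. This integrated use of the bound on $f$, beyond the pointwise slope bound, is what your main line is missing.

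Your dyadic fallback does close the gap, and it is a genuinely different route. On $D_j=[2^{-j-1},2^{-j}]$, the rescaled Lemma~\ref{L:Tub} with $m_j$ uniform cells gives $\|f-If\|^p_{L_p(D_j)}\lesssim 2^{-j}m_j^{-(p+1)}$. Take $m_j=\lceil m2^{-j/(p+2)}\rceil$ for $1\le j\le J:=\lceil (p+1)\log m\rceil$, and use the trivial bound on $[0,2^{-J-1}]$. This uses $\lesssim m+J\lesssim m$ points and gives a $p$-th power error $\lesssim m^{-(p+1)}\sum_j 2^{-j/(p+2)}+2^{-J}\lesssim m^{-(p+1)}$. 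The allocation must decay geometrically: any $m_j\asymp m2^{-\gamma j}$ with $0<\gamma<1/(p+1)$ works, whereas an equal split over the $\asymp\log m$ layers loses $(\log m)^{p+1}$. This route needs only the pointwise bound $|f'_\pm(t)|\le 2/\min\{t,1-t\}$. In exchange it grades more strongly toward the boundary (local mesh $\asymp t^{(p+1)/(p+2)}/m$ versus the paper's $t^{p/(p+1)}/m$) and requires some rounding bookkeeping. The paper uses one explicit grid and pays instead with the summation-by-parts argument. Your $d>1$ induction goes through unchanged with this one-dimensional grid.
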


One main difference between Theorem \ref{T:tensorB}
and Theorem \ref{T:tensorL} is that non uniform tensor product
grids are required. More precisely, we need sampling grids
that get finer near the boundary of $\Omega$.  The reason for this is that we no longer have a uniform bound on the subgradient of  functions  $f\in\cB$. However, because of convexity, we know that,  the subgradients can only get large 
near the boundary.  Another difference is that the non uniform  tensor product grid depends on the value $p$ of the $L_p$ norm in which we measure error.  Otherwise, the proof given below follows the lines
of the proof for $\cL$.

Note  that the value $p=\infty$ is excluded 
(whereas all rates of decay are the same for both classes 
$\cL$ and $\cB$ when $p<\infty$). In fact, it is easily seen that
when $p=\infty$, the sampling numbers do not decay towards $0$,
that is,
\be
\rho_n(\cB)_{L_\infty}\asymp  \rho^\ell_n(\cB)_{L_\infty}\asymp 1.
\ee
Consider, for example, the  case $d=1$ and observe that 
for any data sites $0\leq x^1 <\cdots< x^n \leq 1$, we can
create two functions in $\cB$ that agree at these points and 
differ by $1/2$ in the $L_\infty$ norm.   If $x^1>0$,
we take the functions $\(1-\frac {x}{x^1}\)_+$ and $\(1-2\frac {x}{x^1}\)_+$, and in the case $x^1=0$,
we replace $x^1$ by $x^2$ in the above definitions. This example can be easily modified to create functions with the same property in any dimension $d>1$.

Observe that since $\cL\subset \cB$, we only need to prove part (i), since the other statements in (ii), (iii) and (iv) will follow from the lower bounds that were proved for $\cL$ in the previous section. Therefore, we limit ourselves to the proof of (i) as we proceed forward in this section.

\subsection{Upper bounds for tensor product grids: proof of Theorem \ref{T:tensorB}, (i),   $d=1$}
\label{SS:tensorB1}

We first prove the upper bound
\eref{ubL11} when  $d=1$.
We begin by noting that 
for every univariate function 
$g\in\cB$, any  $t\in (0,1)$,   one
has the upper and lower bounds 
\be
\label{uplow1}
-\frac {2} t\leq  g_-'(t) \le g_+'(t)\leq \frac {2}{1-t}.
\ee
Indeed, since $\|g\|_{L_\infty([0,1])}\le 1$, these bounds follow from the definition \eref{subgradient} of the subgradient, applied to $y=0$, $x=t$,  and
$y=1$, $x=t$, respectively.

We fix the value of $p\in [1,\infty)$ and let $m\in \mathbb N$. We define the   sampling sites $t_k$ as follows:
\be 
\label{tk}
t_k:=\frac{1}{2}\left (\frac{k}{m}\right)^{p+1}, \quad k=0, \ldots,m,
\ee
with $t_0=0$, $t_m=\frac{1}{2}$, 
and 
\be
\label{tkrest}
t_{2m-k}:=1-t_k,\quad k=0, \ldots,m-1,
\ee
with $t_{2m}=1$, which are thus symmetric with respect to $\frac{1}{2}$.   Thus,  the set of data sites is $\Lambda_n:=\{t_0<t_1<\cdots<t_{2m}\}$ and their number is $n=2m+1$.

We consider the interpolation operator $I_n:=I_n^1:=I_{\Lambda_n}$, which assigns to any $f\in\cB(\Omega_1)$ the continuous piecewise linear interpolant of
$f$ at the $n$ points $t_0,\dots,t_{2m}$.  
In order to prove \iref{ubL11}, we will show that 
for any $1\le p<\infty$ and $f\in\cB(\Omega_1)$, we have
\be 
\label{ubp1}
\|f-I_nf\|_{L_p(\Omega_1)}\le Cn^{-r},
\ee
where $C$ depends only on $p$.

We begin by noting that for $T_k:=[t_k,t_{k+1}]$, $k=1, \ldots,m-1$, we have
\be
|T_k|=t_{k+1}-t_k\asymp k^pm^{-p-1},
\ee
with constants in this equivalence and the inequalities that follow in this proof depending only on $p$.
The local estimate of
Lemma \ref{L:intlemma} applied to $T_k$ gives for $k=1,\dots,m-1$,
\begin{eqnarray}
\label{eq11}
\|f-I_nf\|^p_{L_p(T_k)}
\leq   |T_k|^{p+1}(f'_-(t_{k+1})-f'_+(t_k))^p\lesssim  m^{-(p+1)^2}k^{p(p+1)}(f'_-(t_{k+1})-f'_+(t_k))^p,
\end{eqnarray}
where the multiplicative constants in $\lesssim$  here and later in this section only depend on $p$.
For the extreme left interval $T_0=[0,\frac 1 2 m^{-p-1}]$  we have  
\be
\|f-I_nf\|^p_{L_p(T_0)}\leq |T_0|\|f-I_nf\|^p_{L_\infty(T_0)}\leq 2^{p-1}  m^{-p-1},
\ee
because both $f$ and $\cI_nf$ have $L_\infty$ norms at most one.

We next focus on  $[0,1/2]$  and use the above estimates, together with \eref{eq11}, to obtain
\begin{eqnarray}
\label{eqnarray1}
\nonumber
\|f-I_nf\|^p_{L_p([0,1/2])}&=&\sum_{k=0}^{m-1}
\|f-I_nf\|^p_{L_p(T_k)}
\\ \nonumber
&\lesssim& 
m^{-p-1}+m^{-(p+1)^2}
\sum_{k=1}^{m-1} k^{p(p+1)}[f'_-(t_{k+1})-f'_+(t_k)]^p\\ \label{half1pp}
&\leq&
m^{-p-1}+m^{-(p+1)^2}
\sum_{k=1}^{m-1} \left [k^{(p+1)}(f'_+(t_{k+1})-f'_+(t_k))\right ]^p,
\nonumber
\end{eqnarray}
where
we have used that  $f'_-(t_{k+1})\leq f'_+(t_{k+1})$, due to the convexity of $f$.
We bound the sum in the second term by
\be
\sum_{k=1}^{m-1} \left [k^{p+1}(f'_+(t_{k+1})-f'_+(t_k))\right]^p\leq \left (\sum_{k=1}^{m-1} k^{p+1}[f'_+(t_{k+1})-f'_+(t_k)]\right )^p=:S^p,
\ee
which  gives
\be
\label{main1}
\|f-I_nf\|^p_{L_p([0,1/2])}\lesssim m^{-p-1}+m^{-(p+1)^2} S^p.
\ee

To bound $S$, we perform   summation by parts.   Let $K< m $ be the largest index $k$ for which $f_+'(t_k)<0$.  Summing by parts, we obtain
\be 
\label{boundS}
S\lesssim (m-1)^{p+1}f'_+\left(t_m\right)
-f'_+\left(t_1\right)-\sum_{k=2}^{m-1}f'_+(t_k)[k^{p+1}-(k-1)^{p+1}].
\ee
  We use the estimate  \iref{uplow1} to obtain the bounds
\be
f_+'(t_m)=f'_+\left(\frac{1}{2}\right)\leq 4,
\ee
and  
\be
-f_+'(t_1)=-f'_+\left(\frac{1}{2}m^{-p-1}\right)\leq 4m^{p+1}.
\ee
Therefore, 
\be
\label{boundS1}
S\le 8m^{p+1}+\Sigma,
\ee
where
\be 
\label{Sigma}
\Sigma:= \sum_{k=2}^{K}|f'_+(t_k)|[k^{p+1}-(k-1)^{p+1}].
\ee
Since 
$k^{p+1}-(k-1)^{p+1}\asymp k^p$, 
we have

\be 
\label{Sigma1}
\Sigma
\lesssim \sum_{k=2}^K|f'_+(t_k)|k^{p}.
  \ee
 By convexity, we note that
for $k\in [2,K]$, one has
\be
k^{p}m^{-p-1} |f'_+(t_k)|\asymp|T_{k-1}||f'_+(t_k)|\leq f(t_{k-1})-f(t_k),
\ee
and therefore 
 $\Sigma \lesssim m^{p+1}$ because $\|f\|_{L_\infty(\Omega_1)}\le 1$.

It follows then from \eref{boundS1} that 
\be
S\lesssim m^{p+1},
\ee
and substituting this into \eref{main1} gives
\be
\|f-I_nf\|^p_{L_p([0,1/2])}\lesssim m^{-p-1}.
\ee
The same inequality  holds for $\|f-I_nf\|^p_{L_p([1/2,1])}$, and since $n\asymp m$, we reach \eref{ubp1} when $d=1$.

\subsection{Upper bounds for tensor product grids: proof of  Theorem \ref{T:tensorB}, (i),  $d>1$}

The proof for $d> 1$ is similar to the proof 
of Theorem \ref{T:tensorL} for the model class $\cL$ since it uses  induction on the dimension $d$ and a multilinear interpolation operator, based on a tensor product grid. Let $\Gamma:=\{t_0<t_1<\cdots<t_{2m}\}$, $m\in \mathbb N$,  be the set of points 
that were introduced in the case $d=1$, see \eqref{tk}-\eqref{tkrest}.  We define our tensor product grid for $[0,1]^d$ as $\Lambda_n:=\Gamma\times\cdots\times \Gamma$.  Notice that the spacing of the grid points is the same in each coordinate direction.  In other words, the data sites in $\Lambda_n$ are
\be
x^k=(t_{k_1},\dots,t_{k_d}), \quad k=(k_1,\dots,k_d)\in \cS_{2m}^d,
\ee
where $\cS_{2m}=\{0,\dots,2m-1\}$.
The number of data sites is now
\be
n=(2m+1)^d.
\label{htn}
\ee
 We denote by $I_n^d:=I_{\Lambda_n}$ the interpolation operator given  in \S\ref{SS:pwlinterpolation}.  
We denote by $T_k$ the rectangular cell
\be
T_k:=[t_{k_1},t_{k_1+1}]\times\cdots\times [t_{k_d},t_{k_d+1}], \quad k=(k_1,\dots,k_d)\in \cS_{2m}^d,
\ee
 and if $k=(\t k_d,k_d)$, where 
$\t k_d:=(k_1,\dots,k_{d-1})\in \{0,\dots,2m-1\}^{d-1}$,
we have 
\be
\quad T_k=T_{\t k_d}\times [t_{k_d},t_{k_d+1}].
\ee
Given $f\in\cB(\Omega_d)$, $I_n^df$ can be 
described  inductively for  $x=(\tilde x_d,x_d)\in T_k$ by
\be
\label{inductp}
I^d_n f(x_1, \ldots,x_d)=(1-\theta(x_d)) [I^{d-1}_{n'} f(\cdot,t_{k_d})](\t x_d)+\theta(x_d)
[I^{d-1}_{n'} f(\cdot,t_{k_d+1})](\t x_d), 
\ee
where
\be
n:=(2m+1)^d, \quad n':=(2m+1)^{d-1}, \quad \theta(x_d):=\frac{x_d-t_{k_d}}{t_{k_d+1}-t_{k_d}}\in [0,1], \quad x_d\in [t_{k_d}, t_{k_d+1}].
\ee

Before going further, let us observe that in the case of  $p=\infty$, 
we have
\be
\|f-I^d_nf\|_{L_\infty}\leq 2,
\ee
since $I^d_n$ does not increase the 
$L_\infty$ norm. As already explained, we cannot expect any decay of the error in the $L_\infty$ norm.

In order to prove \iref{ubL11}, we will show that 
for any $1\le p<\infty$ and $f\in\cB(\Omega_d)$, $d\geq 1$, we have
\be
\label{toprove1}
\|f-I^d_nf\|_{L_p}\leq C n^{-r/d},\quad n\ge 1,
\ee
where $C$ depends only on $d$ and $p$. 

To prove \eref{toprove1},
we use induction on $d$.  We have already proved the validity of \eref{toprove1} for $d=1$. Assume
that \iref{toprove1} holds for dimension $(d-1)$.   
We note that for $k=(\tilde k_d,k_d)\in \cS_{2m}^d$
and  $x=(\t x_d,x_d)\in  T_k=T_{\t k_d}\times [t_{k_d}, t_{k_d+1}]$, we have that 
$$
\begin{array}{ll}
\|f(x)-I^d_n f(x)\|_{L_p(T_k)}
& = \|f(x)-(1-\theta(x_d)) [I^{d-1}_{n'} f(\cdot,t_{k_d})](\t x_d)-\theta(x_d)
[I^{d-1}_{n'} f(\cdot,t_{k_d+1})](\t x_d)\|_{L_p(T_k)} \\\\
& \leq  \|E_1(x)\|_{L_p(T_k)}+\| E_2(x)\|_{L_p(T_k)}+\| E_3(x)\|_{L_p(T_k)},
\end{array}
$$
where we have used the notation,
\be
E_1(x):=f(\t x_d,x_d)-(1-\theta(x_d)) f(\t x_d,t_{k_d}) -\theta(x_d) f(\t x_d,t_{k_d+1}),
\ee
\be
E_2(x):=(1-\theta(x_d))(f(\t x_d,t_{k_d})-[I^{d-1}_{n'} f(\cdot,t_{k_d})](\t x_d)),
\ee
and
\be
 E_3(x):=\theta(x_d) (f(\t x_d,t_{k_d+1})-[ I^{d-1}_{n'} f(\cdot,t_{k_d+1})](\t x_d)).
\ee
We thus have
\begin{eqnarray}
\nonumber
\|f-I_nf\|^p_{L_p(\Omega_d)}&=&
\sum_{k\in\cS_{2m}^d} 
\|f-I_nf\|^p_{L_p(T_k)}
\leq 
\sum_{k\in\cM_d} 
\left [\sum_{i=1}^3\|E_i(x)\|_{L_p(T_k)}\right]^p
\\ \nonumber
&\leq&
4^{p-1}\left (\sum_{k\in\cM_d} 
\|E_1(x)\|^p_{L_p(T_k)}+\sum_{k\in\cM_d} \| E_2(x)\|^p_{L_p(T_k)}+\sum_{k\in\cM_d} \|E_3(x)\|^p_{L_p(T_k)}\right)\\
\label{zerop}
&=:& 4^{p-1}(\Sigma_1+\Sigma_2+\Sigma_3).
\end{eqnarray}
.

For the first sum, we write
\be
\Sigma_1:=\sum_{k\in\cS_{2m}^d}\int_{T_k} |
E_1(x)|^p\,dx=\sum_{\t k_d\in\cS_{2m}^{d-1}}\int_{ T_{\t k_d} }\left [\int_{t_{k_d}}^{t_{k_d+1}}
 |E_1(\t x_d,x_d)|^p\,dx_d\right ] d\t x_d,
\ee
and observe that for each fixed $\t x_d\in T_{\t k_d}$, the univariate integral is the $p$-th power of the $L_p$  error  on $[t_{k_d},t_{k_d+1}]$ between the univariate function $g:=f(\t x_d,\cdot)$ and its interpolant $I_m^1g$.  Notice that $g$ is in $\cB(\Omega_1)$. To evaluate $\Sigma_1$, we  sum  over $k_d$ for fixed $\t k_d$ and then over all $\t k_d\in \cM_{d-1}$. Using the univariate  interpolation error estimate for $g$, see \eref{ubp1}, we obtain
\begin{eqnarray}
\nonumber
\Sigma_1&=&\sum_{\t k_d\in\cS_{2m}^{d-1}}
\,\int_{T_{\t k_d}}\sum_{k_d=0}^{2m-1} \left [\int_{t_{k_d}}^{t_{k_d+1}}
 |E_1(\t x_d,x_d)|^p\,dx_d\right ] d\t x_d=
 \sum_{\t k_d\in\cS_{2m}^{d-1}}
\,\int_{T_{\t k_d}}\left [\int_0^1|E_1(\t x_d,x_d)|^p\,dx_d\right ]d\t x_d\\ \label{zeroonep}
&\lesssim& m^{-p-1} \sum_{\t k_d\in\cS_{2m}^{d-1}}
\,\int_{T_{\t k_d}}1\,d\t x_d
=m^{-p-1}\lesssim n^{-rp/d} ,
\nonumber
\end{eqnarray}
where we have used that $m\asymp n^{1/d}$.

Each summand in the second sum $\Sigma_2$ is
\begin{eqnarray}
\nonumber
\int_{T_k} | E_2(x)|^p\,dx&=&\(\int_{t_{k_d}}^{t_{k_d+1}}(1-\theta(x_d))^pdx_d\)
\(\int_{T_{\t k_d}} 
 \|f(\cdot,t_{k_d})-[ I^{d-1}_{n'} f(\cdot,t_{k_d})](\t x_d)\|^pd\t x_d\)\\ \label{oneonep}
 &\lesssim&
 m^{-1}\|f(\cdot,t_{k_d})-[ I^{d-1}_{n'} f(\cdot,t_{k_d})]\|_{L_p(T_{\tilde k_d})}^p,
\end{eqnarray}
where we have used that $\int_{t_{k_d}}^{t_{k_d+1}}(1-\theta(x_d))^pdx_d\lesssim |t_{k_d+1}-t_{k_d}|\leq m^{-1}$.
Similarly to what  we did in the estimate for $\Sigma_1$ and using \eref{oneonep}, we have 
\begin{eqnarray}
\nonumber
\Sigma_2&:=&\sum_{k\in\cS_{2m}^d}\int_{T_k} |E_2(x)|^pdx\lesssim\sum_{k_d\in\cS_{2m}}m^{-1}\sum_{\t k_d\in\cS_{2m}^{d-1}}
\|f(\cdot,t_{k_d})-[ I^{d-1}_{n'} f(\cdot,t_{k_d})]\|_{L_p(T_{\tilde k_d})}^p\\ \nonumber
&=&\sum_{k_d\in\cS_{2m}}m^{-1}\|f(\cdot,t_{k_d})-I^{d-1}_{n'}(f\cdot,t_{k_d})\|^p_{L_p(\Omega_{d-1})}
\lesssim
\sum_{k_d\in\cS_{2m}}m^{-1}[n']^{-\frac{rp}{d-1}}\lesssim n^{-rp/d},
\nonumber
\end{eqnarray}
where we have used the induction hypothesis for the $(d-1)$ dimensional convex function $f(\cdot,t_{k_d})$. 

The estimate for the third sum $\Sigma_3$ in \eref{zerop} is handled the same way as the estimate for $\Sigma_2$, this time applying the inductive hypothesis for the $(d-1)$ dimensional function 
$f(\cdot,t_{k_d+1})$, and obtaining that 
\be
\label{twop}
\Sigma_3\lesssim  n^{-rp/d}.
\ee
Using  the estimates for $\Sigma_1,\Sigma_2,\Sigma_3$ in \eref{zerop} gives \iref{toprove1}.

We have therefore proved \iref{ubL11}.
The estimate \eref{sampb} follows for all values
$n=(2m+1)^d$, and, on account of Remark \ref{remdecrease}, therefore for all $n\geq 1$.
This completes the proof of Theorem \ref{T:tensorB}. 

\section{Sharper upper bounds for the sampling numbers of $\cL$}
\label{S:optimal}

It turns out that sampling on tensor product grids, as analyzed in the previous two sections, does not give the optimal sampling rates for the model classes $\cL$ and $\cB$.  In this section, we prove the following  new upper bounds for the sampling rates
of $\cL$.

\begin{theorem}\label{T:further-improved}
    Let $d \geq 2$ and $1 < p < \infty$. Then,  for any $n \geq 2$, we have
    \begin{equation}\label{improved-upper-bound-equation}
    \rho_n(\cL)_{L_p} \leq C \kappa(n,p,d):=  C\begin{cases}
                        n^{-(\frac 1p+\frac 1d)}(\log n)^{\frac 2 p}, & p > d,\\
              n^{-\frac 2 d}(\log{n})^{\frac 3d}, & p = d,\\
             n^{-\frac 2 d}(\log{n})^{\frac 2d}, & p < d,
        \end{cases}
\end{equation}
where $C$ depends only on $p$ and $d$\footnote[2]{Throughout this paper $\log$ indicates the logarithm to the base $2$.}. 
\end{theorem}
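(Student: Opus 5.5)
We will construct, for each $n$, a non-tensor sampling set $\Lambda_n^*$ and argue through the optimal-recovery characterization \eref{opterror}: it is enough to show that any two functions $f,g\in\cL$ agreeing on $\Lambda_n^*$ satisfy $\|f-g\|_{L_p}\le C\kappa(n,p,d)$. Equivalently, we bound $\|f-\hat f\|_{L_p}$ for a specific recovery $\hat f$ built from the data. Since $f-g$ is bounded by $2$ in $L_\infty$ and, on a cell of diameter $h$, by $Ch$, the problem reduces to controlling how much the data leave $f$ undetermined on each cell.

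\textbf{Adaptive partition and sampling budget.} The plan is an adaptive dyadic partition in which the sample budget is spent where $f$ has large "curvature mass". For a convex $f\in\cL$ and a dyadic cube $Q$ of side $h$, let $\omega(Q)$ be the oscillation of $\partial f$ on $Q$, i.e.\ the diameter of $\bigcup_{x\in Q}\partial f(x)$. On $Q$, the interpolation error from vertex data is at most $Ch\,\omega(Q)$, in the spirit of Lemma \ref{L:intlemma}. Monotonicity of the subgradient gives the key summability: within one dyadic level, $\sum_Q \omega(Q)\lesssim h^{1-d}$. This is the multivariate analogue of the telescoping used in the $L_1$ case of Lemma \ref{L:Tub}, namely that slices in each direction sum to at most $2$. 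We will then refine greedily, splitting a cube whenever $h^{1+d/p}\omega(Q)$ exceeds a threshold $\varepsilon$. Two cases follow.
\begin{itemize}
\item When $p<d$, the $\ell_p$ summation against the $\ell_1$ budget $\sum\omega\lesssim h^{1-d}$ gives the $n^{-2/d}$ rate. This is the $L_1$ rate of Theorem \ref{T:tensorL}(iii), now transported to all $p\le d$ by the fact that the error concentrates on few cells.
\item When $p>d$, the $L_\infty$ bound $Ch$ on the few highly curved cells dominates, yielding $n^{-1/d-1/p}$.
\end{itemize}
Counting cubes across the $\sim\log n$ dyadic levels produces the logarithmic factors. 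At $p=d$ the two regimes balance and each level contributes equally, which explains the extra $\log$.

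\textbf{Main obstacle: a single sampling set for all $f$.} The partition above depends on $f$, whereas the statement requires one $\Lambda_n^*$ for the whole class. We would first try to resolve this with a universal set: take the union over levels $j\le J\approx\log n/d$ of random (or well-chosen net) subsets of the dyadic grids, with densities $2^{jd}$ scaled so that the total is $n$. We then need to show that for every $f$, each "bad" cube of level $j$ contains enough sample points. Since the number of bad cubes at level $j$ is bounded by $\sum\omega/\varepsilon$, a combinatorial covering argument should work. One option is a union bound over the entropy of possible bad-cube configurations, which costs only logarithmic factors; this is where the $(\log n)^{2/p}$ and $(\log n)^{2/d}$ factors should come from.

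\textbf{Relating the error to the samples.} We would also need a local lemma, which we expect to be the hardest step. It should state that if $f,g\in\cL$ agree on a grid of spacing $\delta$ inside $Q$, then $|f-g|\lesssim\delta\,\omega(Q)+\delta^2$-type terms; this uses convexity to control values from supporting hyperplanes. Closing the argument requires making this local lemma and the probabilistic covering work simultaneously, while keeping the exponents exactly as in $\kappa(n,p,d)$.
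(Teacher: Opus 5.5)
There is a genuine gap, and it sits in the step you flag as the main obstacle. Take $J\approx\log n/d$, so the finest cubes have side $h\approx n^{-1/d}$. Whatever universal set you then choose, the rest of your argument only uses three things: every dyadic cube of side $\ge h$ contains enough grid-like samples, a local bound $|f-g|\lesssim h\,\omega(Q)$, and the summability $\sum_Q\omega(Q)\lesssim h^{1-d}$. A uniform tensor grid has all of these properties. Yet Theorem~\ref{T:tensorL}(ii) gives $\rho(\cL,\Lambda)_{L_p}\ge cn^{-(1+1/p)/d}$ for every tensor grid, and this is strictly larger than $\kappa(n,p,d)$ for all $1<p<\infty$, $d\ge 2$. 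So no argument of this form can reach the theorem.

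Concretely, $f(x)=|x_1-a|\in\cL$ saturates the summability with $\sim h^{1-d}$ cubes having $\omega(Q)=2$. On those cubes your local bound yields only $(h^{1-d}\cdot h^d\cdot h^p)^{1/p}=h^{1+1/p}$. Your assertion that for $p>d$ the error of size $h$ lives on ``few'' cells is exactly what needs proof, and it is false for grids: a sample-free slab between two grid hyperplanes defeats them.

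Going below scale $n^{-1/d}$ in an $f$-dependent way does not rescue this. Translating the kink, or the tip of a cone, moves the bad cubes anywhere, so $n$ fixed points cannot populate every potentially bad cube at levels with $2^{jd}\gg n$. A union bound over configurations cannot certify a property that must hold for all of them at once when each single one fails with high probability. In fact, if the samples could follow $f$, your own refinement count would give $N^{-2/d}$ for every $p$ when $d=2$. For $p>2$ that beats the lower bound $cn^{-1/2-1/p}$ of Theorem~\ref{T:tensorL}(v), so your $p>d$ case is not derived from the scheme but imported from the answer.

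The missing idea has two parts: a property that separates good point sets from grids, and a partition-free way to convert it into an $L_p$ bound. For the first, the paper uses a jittered random set, one uniform point per cell of mesh $\approx n^{-1/d}$. It shows that every convex $S\subset\Omega$ with no sample in its interior has $|S|\le C\log n/n$, via a union bound over the polynomially many simplices with vertices on a grid of mesh $\sim 1/n$ (Lemmas~\ref{convex-error-lemma} and \ref{simplex-ontainment-lemma-polyhedron}). This finite family of simplices, rather than $f$-dependent cube configurations, is what lets one set serve all $f$, and it rules out thin sample-free slabs of any orientation.

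For the second part, take $f\le g$ in $\cL$ (without loss of generality, by replacing $g$ with $\max\{f,g\}$) agreeing on the samples, and let $x\in A_t=\{g-f>t\}$. The set $S(x)=\{y\in\Omega: f(y)\le g(x)+u\cdot(y-x)\}$, with $u\in\partial g(x)$, is convex, sample-free in its interior, and contains $B(x,t/2)\cap\Omega$. A greedy cover of $A_t$ by such sets has pairwise disjoint polar bodies $u+\frac t3(S(x)-x)^\circ$ (Lemma~\ref{empty-intersection-polar-lemma}). These lie in a fixed ball of gradient space, and each has volume $\gtrsim t^d n/\log n$ by Mahler's inequality. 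This yields $\mu(t)\lesssim t^{-d}(\log n/n)^2$ for $t\lesssim n^{-1/d}$ and $\mu(t)=0$ beyond. Integrating $pt^{p-1}\mu(t)$ gives exactly $\kappa(n,p,d)$. The logarithms come from the bound $|S|\le C\log n/n$ entering $\mu(t)$ squared, plus an extra $\int dt/t$ when $p=d$, not from counting dyadic levels.
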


The bounds \eref{improved-upper-bound-equation} are significantly better than those obtained using tensor product grids.
Moreover, these new bounds are optimal up to the logarithmic factors, since  we have
proved in (iv) of Theorem \ref{T:tensorL} that there is a $c>0$ depending only on $d$ and $p$ such that for $\t r=\min\{2,1+d/p\}$  we have
\be
\rho_n(\cL)_{L_p} \geq cn^{-\t r/d}, \quad  n\ge 1.
\ee 
Recall that the upper bound $Cn^{-\t r/d} $ for this sampling rate is attained without a logarithmic factor when $p=1$ and $p=\infty$, see (iii) of Theorem \ref{T:tensorL}.
\nl

We fix $d\in\{1,2,\dots\}$ for the remainder of this section.  All constants that appear below in this section will depend only on $d$. In order to establish Theorem \ref{T:further-improved}, we will use  particular sets of $n$ 
non tensor product data sites, described in the next subsection.

\subsection{Data sites  used for sharper upper bounds for the sampling numbers of $\cL$}\label{ssec:datasites}

In this subsection, we  prove for every $n\geq 2$ the existence of special data sites $\Lambda^*_n:=\{x^1,\dots,x^n\}\subset \Omega$,
having the following two properties: 
\vskip .1in
\noindent
{\bf Property 1:}  Any closed convex set $S\subset \Omega$ with no data site $x^i$ in its interior ${\rm int}(S)$  satisfies the following estimate for its
 volume $|S|$,
 \be
|S|\leq C\frac {\log n}{n}, \quad n\geq 2,
\label{Ssize}
\ee
for some constant $C$.

\vskip .1in
\noindent
{\bf Property 2:} For all $x\in \Omega$, one has
\be
\min_{i=1,\dots,n} |x-x^i| \leq Cn^{-1/d}, \quad n\geq 2.
\label{cover}
\ee
Both constants $C$  depend only on $d$.

We shall use probabilistic methods to prove the existence of sets $\Lambda_n^*$ that satisfy these two properties. %
Consider the tensor product grid $\Lambda_m$ with spacing $1/m$, where $m$ is the largest integer such that $m^d \leq n$. We continue to use our previous notation for such tensor product grids, in particular the notation $T_k$ for the cells, $k\in \cS_m^d$, produced by the grid (see \eqref{t-k-definition}). For each $k\in \cS_m^d$ we choose a point $x_k$ uniformly at random from the cell $T_k$.  The number of these randomly chosen points is 
\be
\label{pp}
c'n< m^d\le n, \quad c'=c'(d).
\ee
We additionally choose $n-m^d$ points chosen from $\Omega_d$ at random  and let $\Lambda_n^*$ denote the corresponding (random) set of points. It is clear that $\Lambda_n^*$ automatically satisfies Property 2, since for each $k\in \cS_m^d$ the cell $T_k$ contains a point from $\Lambda_n^*$. The following result shows that with positive probability it also satisfies Property 1.
\begin{prop}\label{property-1-proposition}
    For each $d=1,2,\dots$, there are constants $L,A,\alpha>0$,  depending only upon $d$, such that for every $n \geq 2$ and every   $C\ge 1$, we have
    \begin{equation}
    \label{toprovec}
   \pi(C;n,d):=  \    \mathbb{P}\left(\exists S\subset\Omega:~ \text{$S$ is convex, } |S| \geq C\frac{\log n}{n},~\text{and $S\cap \Lambda_n^* = \emptyset$}\right) < Ln^{A-\alpha C}.
    \end{equation}
    In particular, the probability that $\Lambda_n^*$ doesn't satisfy Property 1 is less than $1$ if $C$ is sufficiently large.
\end{prop}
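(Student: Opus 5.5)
We discretize the family of convex sets and then take a union bound. The statement has a dimension-dependent exponent $A$, so it suffices to find a family $\mathcal{F}_n$ of at most $L n^{A}$ "test" convex sets with two properties. First, every convex $S\subset\Omega$ with $|S|\ge C\frac{\log n}{n}$ contains some $K\in\mathcal{F}_n$ with $|K|\ge c_0|S|\ge c_0 C\frac{\log n}{n}$, where $c_0$ depends only on $d$. Second, each fixed $K$ with $|K|\ge c_0C\frac{\log n}{n}$ misses $\Lambda_n^*$ with probability at most $n^{-\alpha C}$. If $S\cap\Lambda_n^*=\emptyset$, then the corresponding $K\subset S$ also misses $\Lambda_n^*$, so the union bound gives $\pi(C;n,d)\le L n^{A}n^{-\alpha C}$.

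\textbf{Probability estimate for a fixed set.} Only the $m^d$ stratified points are used. Each $x_k$ is uniform on $T_k$, the draws are independent, and $|T_k|=m^{-d}$. Hence
\[
\mathbb{P}(K\cap\Lambda_n^*=\emptyset)\le\prod_{k}\Bigl(1-\frac{|K\cap T_k|}{|T_k|}\Bigr)\le\exp\Bigl(-m^d\sum_k|K\cap T_k|\Bigr)=\exp(-m^d|K|).
\]
By \eqref{pp}, $m^d>c'n$, so this is at most $\exp(-c'c_0C\log n)=n^{-\alpha C}$ with $\alpha=c'c_0\ln 2$. This step is routine.

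\textbf{Discretization of convex sets.} This is the main obstacle. I would use John's theorem: a convex body $S$ contains an ellipsoid $E$ with $S\subset c+d(E-c)$, where $c$ is the center of $E$, so $|E|\ge d^{-d}|S|$. It therefore suffices to discretize ellipsoids, or equivalently the affine images $c+MB$ of the unit ball $B$, where $M$ is symmetric positive definite. Take a grid of mesh $n^{-\beta}$ for the center $c$ and for the entries of $M$; both range over bounded sets, since $E\subset\Omega$. Replace $E$ by a shrunken discrete copy $c'+(1-\tau)M'B$ that lies inside $E$.

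The difficulty is that $E$ may be extremely thin in some directions. Its smallest semi-axis can be as small as about $\frac{\log n}{n}$, because the other axes are at most $\sqrt d$ while $|E|\gtrsim\frac{\log n}{n}$. A perturbation of $M$ of size $n^{-\beta}$ then must be small relative to that thinnest axis. Choosing $\beta$ large, for example $\beta=3$, gives $\|M-M'\|\ll \lambda_{\min}(M)\sim n^{-1}\log n$ and $|c-c'|\ll\lambda_{\min}(M)$. After shrinking by a factor $1/2$, the discrete copy lies inside $E$ and still has volume at least $2^{-d}|E|$.

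The number of grid choices is $n^{O(\beta d^2)}$, which fixes $A$. The constant $L$ absorbs the case where $n$ is small. Then $c_0=(2d)^{-d}$, and choosing $C$ large makes $A-\alpha C<0$.
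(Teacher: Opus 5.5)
Your proof is correct, but it builds the finite family of test sets differently from the paper. The fixed-set probability estimate and the union bound are the same as in the paper.

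\textbf{The paper's route.} It works on the coarse grid $\Lambda_r$ with $r=4d^2n$. Lemma~\ref{convex-error-lemma} shows that $\mathrm{conv}(S\cap\Lambda_r)$ loses at most $2d^2/r=\frac{1}{2n}$ of the volume of $S$. Lemma~\ref{simplex-ontainment-lemma-polyhedron} then extracts from it a simplex with vertices in $\Lambda_r$ and volume at least $\frac{2^{1-d}}{d!}\cdot\frac{C}{2}\frac{\log n}{n}$. The test family is therefore the at most $(4d^2n+1)^{d(d+1)}$ simplices with vertices on $\Lambda_r$. Because this approximation is controlled \emph{additively} in volume, thin sets cause no difficulty, and a mesh of order $1/n$ is enough.

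\textbf{Your route.} You use John's theorem and need a \emph{multiplicative} inclusion $c'+\tfrac12 M'B\subset E$. That forces the perturbations of the center and of the matrix to be small relative to $\lambda_{\min}(M)\gtrsim\frac{\log n}{n}$, which is why you need the finer mesh $n^{-3}$ and get a larger exponent $A$. This costs nothing, since only polynomial growth of the family matters.

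\textbf{Comparison.} The paper's two lemmas are elementary and proved in the paper, whereas you rely on a deeper external theorem. In return, your test set lies in the interior of $S$ (after passing to $\mathrm{int}(S)$ if needed), which is directly the form Property~1 requires.

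\textbf{Bookkeeping to tighten.}
\begin{itemize}
\item Saying that $L$ ``absorbs'' small $n$ is not literally enough, because for fixed $n$ one has $Ln^{A-\alpha C}\to 0$ as $C\to\infty$. Either note that $\pi(C;n,d)=0$ once $C\frac{\log n}{n}>1$, so that for $n<n_0$ only $C\le n/\log n$ matter and $L>2^{\alpha n_0}$ works. Or take mesh $\epsilon_d n^{-3}$ with $\epsilon_d$ small enough that the inclusion holds for every $n\ge 2$.
\item The discrete copy has volume $2^{-d}|B|\,|\det M'|=(1-O(n^{-2}))\,2^{-d}|E|$, so $c_0$ should be taken slightly smaller than $(2d)^{-d}$.
\item Since $\log$ is base $2$, $\exp(-x\log n)=n^{-x/\ln 2}$. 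So $\alpha=c'c_0/\ln 2$ is the exact exponent; your $\alpha=c'c_0\ln 2$ still gives a valid inequality.
\end{itemize}
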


The first step in the proof of Proposition \ref{property-1-proposition} is to reduce the set of convex sets $S\subset \Omega$ to a finite set of simplices. To do this, let $r \geq 1$ be an integer (which will be determined later) and consider the uniform grid $\Lambda_r$ with spacing $1/r$. We have the following simple geometric result, which essentially bounds the measure of the boundary of a convex set.
\begin{lemma}
\label{convex-error-lemma}

    Let $r\ge 1$ be an integer and let $S\subset \Omega_d $ be closed convex set. Then, we have
    \begin{equation}\label{convex-error-equation}
        |S  \setminus \text{\normalfont conv}(S\cap\Lambda_r)| \leq 2d^2 r^{-1}.
    \end{equation}
    \end{lemma}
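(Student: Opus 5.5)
We will show that every point of $S\setminus \text{conv}(S\cap\Lambda_r)$ lies close to the boundary of $S$. The region within distance $\sqrt d/r$ of the boundary of a convex subset of $\Omega_d$ has measure $O(d^2/r)$.

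Let $K:=\text{conv}(S\cap\Lambda_r)$. Fix $x\in S$ and let $Q\ni x$ be a closed grid cube of side $1/r$ with vertices in $\Lambda_r$. \emph{Claim:} if the closed ball $B(x,\sqrt d/r)$ is contained in $S$, then $x\in K$. Indeed, every point of $Q$, and in particular every vertex of $Q$, is at distance at most $\sqrt d/r$ from $x$. So all vertices of $Q$ lie in $S\cap\Lambda_r$, and hence $x\in Q=\text{conv}(\text{vertices})\subset K$. Consequently
\[
S\setminus K\subset S_\delta:=\{x\in S:\ \dist(x,\partial S)<\delta\},\qquad \delta:=\sqrt d/r,
\]
where $\partial S$ is the boundary of $S$ relative to $\R^d$. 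Since $S\subset\Omega_d$, a ball around $x$ contained in $S$ is the same as $\dist(x,\R^d\setminus S)\ge\delta$.

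It remains to bound $|S_\delta|$. The clean route is the layer estimate for convex bodies: for a convex body $S$, $|S_\delta|\le \delta\,\mathcal H^{d-1}(\partial S)$. This follows, for instance, from the coarea formula applied to $u(x)=\dist(x,\partial S)$, which satisfies $|\nabla u|=1$ a.e. on $S$. Its level sets $\{u=t\}$ are the boundaries of the convex sets $\{u\ge t\}\subset S$. Perimeter is monotone under inclusion of convex sets, so $\mathcal H^{d-1}(\{u=t\})\le \mathcal H^{d-1}(\partial S)$. By the same monotonicity, $\mathcal H^{d-1}(\partial S)\le \mathcal H^{d-1}(\partial\Omega_d)=2d$. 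Altogether
\[
|S\setminus K|\le |S_\delta|\le 2d\cdot\frac{\sqrt d}{r}\le 2d^2r^{-1}.
\]
If $S$ has empty interior, then $|S|=0$ and there is nothing to prove.

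The only genuinely nontrivial ingredient is this perimeter-based layer bound. If one prefers to avoid the coarea formula, we would instead argue more elementarily. Write $S_\delta\subset S\setminus S^{-\delta}$, where $S^{-\delta}=\{u\ge\delta\}$ is convex. Then use the Steiner-type fact that $S\subset S^{-\delta}+\delta B$ together with monotonicity of mixed volumes, which yields $|S|-|S^{-\delta}|\le \delta\,\mathcal H^{d-1}(\partial S)$ to first order. Higher-order terms would be absorbed by the slack between $2d^{3/2}$ and $2d^2$, possibly by restricting to $r\ge d$ and treating small $r$ trivially: since $|S|\le1$, the bound holds automatically when $2d^2/r\ge1$. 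I expect handling this step rigorously, rather than the grid-cube claim, to be the main obstacle.
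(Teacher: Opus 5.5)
Your proof is correct. It differs from the paper's in how it controls the measure of the region near $\partial S$.

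Both arguments start from the same observation: a grid cell whose vertices all lie in $S$ is contained in $\text{conv}(S\cap\Lambda_r)$, so the missing set sits near $\partial S$.

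The paper then stays discrete. It covers $S\setminus\text{conv}(S\cap\Lambda_r)$ by the cells $T_k$ meeting both $S$ and $S^c$, and counts these cells directly. A separating hyperplane through a point of $T_k\cap S^c$ shows that each such cell has ``height'' at most $d-1$ in some coordinate direction $\pm e_i$. This gives at most $d$ such cells per column, hence at most $2d\cdot d\,r^{d-1}$ cells of volume $r^{-d}$ each.

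You instead pass to the continuous layer $S_\delta$ with $\delta=\sqrt d/r$, and bound $|S_\delta|\le \delta\,\mathcal{H}^{d-1}(\partial S)\le 2d\delta$. This uses four ingredients:
\begin{itemize}
\item the coarea formula for $u=\dist(\cdot,\R^d\setminus S)$;
\item $|\nabla u|=1$ a.e.;
\item convexity of the superlevel sets $\{u\ge t\}$, whose boundaries are $\{u=t\}$ for $t<\max u$ (the single value $t=\max u$ is irrelevant);
\item monotonicity of surface area under inclusion of convex bodies.
\end{itemize}
This is rigorous as written, so the step you flag as the main obstacle is already settled by your coarea argument. It also yields the slightly better constant $2d^{3/2}$. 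The cost is that it imports several nontrivial facts from convex geometry and geometric measure theory, whereas the paper's counting argument is elementary and self-contained.

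You should drop your fallback route. The inclusion $S\subset S^{-\delta}+\delta B$ is false in general. A corner of a square is at distance $\sqrt2\,\delta$ from the inner parallel body, and $S^{-\delta}$ is empty when $S$ is thinner than $2\delta$. So that sketch could not replace the coarea step.
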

\begin{proof}
    Note that the difference $S \setminus \text{\normalfont conv}(S\cap\Lambda_r)$ is  covered by the set of cubes $T_k$ for $k\in \cS_r^d$ which intersect both $S$ and $S^c$. It is known that the number of such cubes is bounded by $Cr^{d-1}$ (see for instance \cite{lassak1988covering}), which completes the proof since the measure of each such cube is $r^{-d}$. However, for the reader's convenience, we give a simple proof of this bound.  

    Given any cube $T_k$, $k\in \cS_r^d$, with  $T_k\cap S\neq \emptyset$, and a coordinate direction $v\in \{\pm e_i:~i=1,...,d\}$, we define the height $h(T_k,v)$ of $T_k$ in the direction $v$ as follows:
    \be
        h(T_k,v) := \max\{t\in \mathbb{Z}_{\geq 0}:~T_{k+tv}\cap S \neq \emptyset\}.
    \ee
    In words, the height $h(T_k,v)$ is the maximum number of cubes one can move in the $v$-direction which still intersect $S$.
    
    We claim that if $T_k$ intersects both $S$ and $S^c$, then there exists a direction $v$ such that 
    \be
    \label{claim2}
    h(T_k,v) \leq d-1.
    \ee
   Indeed, suppose that $T_k$ intersects both $S$ and $S^c$. Let $y\in T_k \cap S^c$ and take a hyperplane $H$ separating $y$ from $S$, and let $w$ be the unit normal to $H$ such that   $w\cdot x < w\cdot y$ for all $x\in S$. Let $v^*$ be a coordinate direction such that 
    \be
    w\cdot v^* \geq \frac{1}{\sqrt{d}}.
    \ee
    Such a direction can always be found by taking the  coordinate of $w$  with the largest absolute value.

Now, suppose that $z\in T_{k+tv^*}$. Observe that $z - y = tv^* + \rho$ where $|\rho| \leq \sqrt{d}r^{-1}$. This implies that
\be
    w\cdot z = w\cdot y + tw\cdot v^*+ w\cdot \rho \geq w\cdot y + \frac{t}{\sqrt{d}} - \sqrt{d}.
\ee
Now, if $t \geq d$, this implies that $w\cdot z \geq w\cdot y$, and thus $z\notin S$. Hence $T_{k+tv^*} \cap S = \emptyset$ for all $t \geq d$, which gives  that $h(T_k,v^*) \leq d-1$ as desired. This means that
\begin{equation}\label{e:union_bound}
    \{T_k:~T_k\cap S \neq \emptyset~\text{and}~T_k\cap S^c\neq \emptyset\} \subset \bigcup_{v} \{T_k:~h(T_k,v) \leq d-1\}.
\end{equation}
We then observe that for each direction $v$ we have
    \be
        \# \{k\in \cS_r^d:~h(T_k,v) \leq d-1\} \leq dr^{d-1},
    \ee
    which in view of \eqref{e:union_bound} completes the proof.     

\end{proof}

We will also need the following straightforward geometric result, which will allow us to reduce the proof of Proposition \ref {property-1-proposition} to the case that $S$ is  a simplex.

\begin{lemma}\label{simplex-ontainment-lemma-polyhedron}
    Every closed convex set $S\subset \Omega_d$ contains a closed simplex $\Delta$ such that  
    $$
    |\Delta| \geq c|S|, \quad  c=c(d)=\frac{2^{1-d}}{d!}.
    $$
    Moreover, the vertices of $\Delta$ can be taken as extreme points of $S$.
\end{lemma}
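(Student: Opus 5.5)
We argue by induction on $d$, with the precise constants $c_d:=\frac{2^{1-d}}{d!}$. Note that $c_1=1$ and $c_d=c_{d-1}/(2d)$, so each inductive step must lose exactly a factor $2d$. The factor $1/d$ comes from the volume of a pyramid and the factor $1/2$ from a choice between two apexes. If $|S|=0$ there is nothing to prove: any simplex spanned by extreme points works, possibly degenerate. For $d=1$, $S=[a,b]$ is itself a simplex whose vertices are extreme points, so $c_1=1$.

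For the inductive step, assume $d\ge2$ and $|S|>0$. Let $a,b\in S$ realize the diameter of $S$, which exists by compactness, and set $u:=(b-a)/|b-a|$. Both $a$ and $b$ are extreme points of $S$. Indeed, $b$ is a farthest point of $S$ from $a$, and a point of a compact set farthest from a fixed point cannot lie in the relative interior of a segment in that set, by strict convexity of the Euclidean norm. The same holds for $a$ with respect to $b$.

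Let $P$ be the orthogonal projection onto the hyperplane $H=u^\perp\cong\R^{d-1}$. Then $PS$ is a compact convex set in $H$. Every fiber $S\cap P^{-1}(y)$ is a segment parallel to $u$, so its length is at most ${\rm diam}(S)=|b-a|$. Fubini's theorem therefore gives $|S|\le |PS|_{d-1}\,|b-a|$. The set $PS$ sits in a $(d-1)$-dimensional cube, possibly after an isometric change of coordinates; the inductive statement only uses the convexity of the set, so we may apply it. It yields a $(d-1)$-simplex $\Delta'\subset PS$ with vertices $p_1,\dots,p_d$ that are extreme points of $PS$, and with $|\Delta'|_{d-1}\ge c_{d-1}|PS|_{d-1}$.

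We lift each $p_i$ to an endpoint $q_i$ of the fiber $S\cap P^{-1}(p_i)$. Each $q_i$ is an extreme point of $S$. To see this, suppose $q_i=\lambda x+(1-\lambda)y$ with $x,y\in S$ and $0<\lambda<1$. Projecting gives $p_i=\lambda Px+(1-\lambda)Py$, so extremality of $p_i$ forces $Px=Py=p_i$. Then $x,y$ lie in the fiber, and since $q_i$ is an endpoint of that fiber, $x=y=q_i$.

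Let $Q:={\rm conv}(q_1,\dots,q_d)$. Since $PQ=\Delta'$ is nondegenerate, the affine hull of $Q$ is a hyperplane $\Pi$ that is not parallel to $u$. Hence $\Pi$ meets the line $\{a+tu\}$ in exactly one point $z$.

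For an apex $e$ on this line, the pyramid ${\rm conv}(Q,e)$ has volume $\frac1d\,|\Delta'|_{d-1}\,|e-z|$. This follows by writing the pyramid as the region between $\Pi$ and $e$ over the base $PQ$, that is, base times height measured along $u$, divided by $d$. By the triangle inequality, $|a-z|+|b-z|\ge|a-b|$, so one of the two apexes $e\in\{a,b\}$ satisfies $|e-z|\ge |b-a|/2$.

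Taking $\Delta:={\rm conv}(q_1,\dots,q_d,e)$ for that apex, $\Delta\subset S$ by convexity, all its vertices are extreme points of $S$, and
$$
|\Delta|\ \ge\ \frac{1}{2d}\,|\Delta'|_{d-1}\,|b-a|\ \ge\ \frac{c_{d-1}}{2d}\,|PS|_{d-1}\,|b-a|\ \ge\ c_d\,|S|.
$$

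I expect the main delicacy to be choosing the direction $u$ so that two requirements hold at once. The fiber bound $|S|\le|PS|\,|b-a|$ requires $|b-a|$ to dominate every chord parallel to $u$. Separately, the apexes $a,b$ must be extreme points. A longest chord in an arbitrary fixed direction would give the first requirement but its endpoints may lie in the interior of a face of $S$. Using a diameter pair resolves both requirements simultaneously. The second point to check carefully is the pyramid volume formula when $\Pi$ is oblique, which is a short change of variables $x=(y,t)$ with $y\in PQ$.
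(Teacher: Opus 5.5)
Your proof is correct, and it takes a genuinely different route from the paper's.

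\textbf{The paper's route.} The paper builds all $d+1$ vertices greedily in $\R^d$. It takes $z^0,z^1$ to be a diameter pair, then each $z^k$ to be a point of $S$ farthest from the affine hull $F_k$ of $z^0,\dots,z^{k-1}$. The resulting unit normals are orthonormal, so $S$ lies in a box with side lengths $|z^1-z^0|,2h_2,\dots,2h_d$, where $h_k={\rm dist}(z^k,F_k)$. Comparing this box with $|\Delta|=\frac{1}{d!}|z^1-z^0|\prod_{k\ge2}h_k$ gives the factor $2^{d-1}d!$ in one stroke. Extremality comes from observing that the maximized functions are convex, so the maxima can be taken at extreme points.

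\textbf{Your route.} Your argument shares only the first step with this. The diameter bounds every chord, which is exactly your Fubini estimate $|S|\le|PS|_{d-1}|b-a|$. From there you recurse on the projection $PS$, lift the base vertices to fiber endpoints, and keep only one of $a,b$ as the apex. So your factor $\frac12$ per level comes from the choice of apex, not from the width-$2h_k$ strips.

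\textbf{What each buys.} Your version makes the recursion $c_d=c_{d-1}/(2d)$ transparent. It also proves extremality directly, via strict convexity of the norm and the lifting argument, rather than through a maximum principle. The price is the oblique-pyramid formula, which your shear $(y,t)\mapsto(y,t-\phi(y))$ handles, plus an induction across dimensions. The paper's version is non-recursive and additionally yields a box $R\supset S$ with $|R|\le 2^{d-1}d!\,|\Delta|$. It reuses the same device in its remark on Mahler's inequality.

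\textbf{One wording fix.} An isometry alone need not place $PS$ in a unit cube. For $S=\Omega_3$ your $u$ is the main diagonal, and $PS$ is a hexagon of diameter $2\sqrt{2/3}>\sqrt2$. Either add a dilation, which is harmless because the volume ratio and extremality are invariant under similarities, or simply state the induction for arbitrary compact convex sets in $\R^d$, which is all your argument uses.
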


\begin{proof} The lemma follows from a closely related result of Hadwiger  \cite{hadwiger1955volumschatzung}.  For the reader's convenience, we give the following proof  which constructs the  vertices of $\Delta$ using a greedy-type procedure.
We start with the following two points from $S$. 
    Let $z^0,z^1\in S$ be points of maximal distance, i.e., such that
    \begin{equation}
        |z^1 - z^0| = \max_{x,y\in S} |x - y|.
    \end{equation}
    Observe that the function $f(z^0,z^1) = |z^1 - z^0|$ is a convex function on the set $S\times S$. Hence, it achieves its maximum at an extreme point of $S\times S$. Thus, we may take $z^0$ and $z^1$ to be extreme points of $S$.
    
    Let $v_1$ be the unit vector  $v_1 :=\frac 1 {|z^1 - z^0|} (z^1 - z^0)$. Note  that $S$ is contained in the strip
    \begin{equation}
        L_1 := \{x\in \mathbb{R}^d: z^0\cdot v_1 \leq x\cdot v_1 \leq z^1\cdot v_1\}
    \end{equation}
    of width $|z^1 - z^0|$. Indeed, if $x\in S$ and  $x\cdot v_1 > z^1\cdot v_1$, then 
    \be
    |x - z^0| \geq (x - z^0) \cdot v_1 > (z^1 - z^0)\cdot v_1 = |z^1 - z^0|,
    \ee
    which contradicts the maximality of $|z^1 - z^0|$. A similar argument holds if we assume that $x\in S$ and  $x\cdot v_1 < z^0\cdot v_1$.

  Next, we inductively construct points $z^k\in S$ for $k=2,...,d$,  as follows. Suppose that $z^0,...,z^{k-1}$, have 
  been chosen. We denote by $F_k$  the $(k-1)$-dimensional affine space spanned by 
  these points, i.e.,
    \begin{equation}
        F_k := \left\{\sum_{i=0}^{k-1}a_iz^i \; :\; \sum_{i=0}^{k-1}a_i = 1\right\}.
    \end{equation}
    We find a point  $z^k\in S$ at maximal distance to $F_k$, i.e.,
    \begin{equation}
        \dist(z^k,F_k) = \max_{x\in S} \dist(x,F_k).
    \end{equation}
    Note that the distance to $F_k$ is a convex function, and thus $z^k$ may be taken to be an extreme point of $S$.
    
    Denote by $P_kz^k:= \arg\min_{y\in F_k}|y - z^k|$ its projection onto $F_k$, and let $v_k$ be the unit vector defined as
    $$
    v_k := \frac 1 {\dist(z^k,F_k)}(z^k - P_kz^k)=\frac 1 {|z^k-P_kz^k|}(z^k - P_kz^k).
    $$
Note that $v_k$ is orthogonal to the affine space $F_k$, and thus, by construction,  
 all  vectors $v_1,\dots,v_k$ form an orthonormal system. We observe that $S$ is contained in the strip
    \begin{equation}
        L_k := \{x\in \mathbb{R}^d: (2P_kz^k- z^k)\cdot v_k\leq x\cdot v_k \leq z^k\cdot v_k\}
    \end{equation}
    of width $2|z^k-P_kz^k|$. This holds since if $x\in S$ but $x\notin L_k$, then 
    $$|x - P_kz^k|\geq |(x - P_kz^k)\cdot v_k| > (z^k - P_kz^k)\cdot v_k=|z^k - P_kz^k|,
    $$
 which implies $\dist(x,F_k) > \dist(z^k,F_k)$, and thus contradicts the choice of $z^k$. Similarly, one shows that 
    $x\in S$ and $x\cdot v_k>z^k\cdot v_k$ is not possible.

    Once we have constructed the points $z^0,...,z^d$, we consider the simplex 
    $$
    \Delta := \text{\normalfont conv}(z^0,...,z^d).
    $$
    Clearly $\Delta\subset S$ since $S$ is convex and each $z^i\in S$. Recall from the construction that each $z^i$ is also an extreme point of $S$. 
    
    Using that the directions $v_k$ are orthogonal,
 we calculate that the volume $|\Delta|$ is
    \begin{equation}
        |\Delta| = 
     \frac{1}{d
        !}|z^1-z^0|\prod_{k=2}^d|z^k-P_kz^k|.
    \end{equation}
Since $S$ is contained in the rectangle $R= \bigcap_{k=1}^dL_k$, its Lebesgue measure satisfies
    \begin{equation}
        |S| \leq |R|= |z^1-z^0|\prod_{k=2}^d(2|z^k-P_kz^k|) = 2^{d-1}d!|\Delta|,
    \end{equation}
    which is the bound we wanted to show.
    \end{proof}

    We will now give the proof of Proposition \ref{property-1-proposition}.
    
    \begin{proof}[{\bf Proof of Proposition \ref{property-1-proposition}}]
   Let $n\ge 2$ and $d\ge 1$ be arbitrary but fixed. Given any $C\geq 1$, we consider the collection
    \be 
    \label{csets}  \Gamma:=\Gamma(C;n,d):=\left\{S: \ S \ {\rm is \ closed, \ convex \ and }\ |S| > C\frac{\log{n}}{n}\right\}.
    \ee
    
    Let us choose $r=4d^2n$, which will make the right hand side of \eqref{convex-error-equation}   less than $\frac{1}{2}\frac{\log{n}}{n}$. Then for $S\in \Gamma$, using Lemma \ref{convex-error-lemma}, we will have that 
    \be
        |\text{\normalfont conv}(S\cap\Lambda_r)| \geq \frac{C}{2}\frac{\log{n}}{n}.
    \ee
    Further, by applying Lemma \ref{simplex-ontainment-lemma-polyhedron} to the set $\text{\normalfont conv}(S\cap \Lambda_r)$ (whose extreme points clearly lie in $\Lambda_r)$, we obtain a simplex $\Delta$ with vertices lying on $\Lambda_r$ such that $\Delta \subset S\cap\Lambda_r \subset S$, and 
    \be
        |\Delta| \geq c\frac{C}{2}\frac{\log{n}}{n},
    \ee
    where $c=c(d)$ is the constant in Lemma~\ref{simplex-ontainment-lemma-polyhedron}.  In going further, we define $\tilde \Gamma:=\tilde \Gamma(C;n,d)$ to be the set of all such simplices.  
    
    We know that for any  $S\in\Gamma$, there is a $\Delta\in \tilde \Gamma$ such that $\Delta\subset S$.     Therefore,    we have the estimate
    
    \be\label{desired-probability-bound-eq}
       \pi(C;n,d) \le  \mathbb{P}\left(\exists \ \Delta\in \tilde \Gamma \ \text{with $\Delta\cap \Lambda_n^* = \emptyset$}\right)=:\tilde\pi(C;n,d).
    \ee
    To bound $\tilde\pi$, we consider any  simplex $\Delta\in \tilde\Gamma$  and the probability
    \be
        \mathbb{P}(\Delta \cap \Lambda_n^* = \emptyset).
    \ee
   Recall that for  the regular grid $\Lambda_m$,  $m$ is the largest integer such that $m^d\le n$.  We fix a cell $T_k$ with $k\in \cS_m^d$. The point in $\Lambda_n^*$ which is chosen randomly from $T_k$ does not lie in $\Delta$ with probability
    \be
        \mathbb{P}(T_k\cap \Lambda_n^* \cap \Delta = \emptyset) = 1-\frac{|\Delta\cap T_k|}{|T_k|}.
    \ee
    Hence, we have
    \begin{equation}
        \mathbb{P}(\Delta \cap \Lambda_n^* = \emptyset) = \prod_{k\in \cS_m^d} \left(1-\frac{|\Delta\cap T_k|}{|T_k|}\right) \leq \exp\left(-\sum_{k\in \cS_m^d}\frac{|\Delta\cap T_k|}{|T_k|}\right),
    \end{equation}
  where we have used the inequality $1-x \leq e^{-x}$ for $x\geq 0$. Now we simply calculate
    \be
        \sum_{k\in \cS_m^d}\frac{|\Delta\cap T_k|}{|T_k|} = m^{d} \sum_{k\in \cS_m^d}|\Delta\cap T_k| = m^{d}|\Delta|>c' n |\Delta|,
    \ee
    where we have used \eref{pp}.
    Hence, for each fixed simplex $\Delta\in\tilde \Gamma$ we have
    \be
        \mathbb{P}(\Delta \cap \Lambda_n^* = \emptyset) < \exp(-c'n|\Delta|).
    \ee
    Finally, we apply a union bound over all simplices $\Delta\in \tilde\Gamma$.     The cardinality $\#(\tilde \Gamma)$ is bounded by $(4d^2n+1)^{d(d+1)}$, since this is the number of $(d+1)$ tuples of points in $\Lambda_r$.   This shows that
    \be
    \pi(C;n,d)\le \tilde\pi(C;n,d)<     [4d^2n + 1]^{d(d+1)}\exp\left(-c'c\frac{C}{2}\log(n)\right) < Ln^{A- \alpha C},
    \ee
    where $L,A$ and $\alpha$ depend only on $d$. This completes the proof.    

    \end{proof}

\begin{remark}
    We note that point sets satisfying Property 1, which are sometimes called Danzer sets in the literature, have also been constructed and analyzed in \cite{bambah1971problem,solomon2016dense,stefanescu2025maximal}. However, the constructions in these papers are much more complicated than our construction in this section. A precise estimate of the constant has also been obtained in \cite{arizmendi2016large}, however only for $d=2$. Let us also note that it appears to be an open problem whether the logarithmic factor in \eqref{Ssize} is required (see \cite{bambah1971problem,solomon2016dense}). If this factor could be removed, it would remove the logarithmic factors in Theorem \ref{T:further-improved}, removing the gap with the lower bounds.
\end{remark}
 
 \subsection{Proof of Theorem \ref{T:further-improved}}
 \label{SS:Prooftheorem}
 We fix $d$ and $\Omega=\Omega_d$,  fix $1\le p\le\infty$, and  fix $n\geq 2$. Let $\Lambda^*:=\Lambda_n^*$ be any set of $n$ points that satisfy Proposition \ref{property-1-proposition}.   We shall show that sampling at the data sites $\Lambda_n^*$ satisfies
 \be 
 \label{s*}
 \rho(\cL(\Omega),\Lambda_n^*)_{L_p} \le C\kappa(n,p,d),
 \ee
 for all $n$ and $1\le p\le \infty$.
 In order to prove \eref{s*}, 
 it suffices to consider any two functions 
$f,g\in\cL:=\cL(\Omega)$ that agree at the data sites in  $\Lambda^*$ 
and show that 
\be
\|f-g\|_{L_p}\leq C\kappa(n,p,d).
\label{goal}
\ee
Indeed, this shows that
any map that produces from the data vector $w=w(f)=(f(x^1), \ldots,f(x^n))$ a function
$g\in \cL$ that agrees with the data 
on $\Lambda^*$ will satisfy
\iref{goal}
and therefore prove the corresponding bound on $\rho_n(\cL)_{L_p}$. A constructive way to produce 
such a recovery map is proposed in  Section \S\ref{S:OR}.

Next, note that whenever $f,g\in\cL$, the function  $\max\{f,g\}$ is also in $\cL$, and we have
\be
\|f-g\|_{L_p}\leq \|f-\max\{f,g\}\|_{L_p}+\|g-\max\{f,g\}\|_{L_p}.
\ee
Therefore, we assume without loss of generality that 
\be 
\label{assume1}
g(x)\geq f(x), \quad x\in\Omega.
\ee

For the remainder of this section, we fix any $f,g\in \cL$ that satisfy \eref{assume1} and agree on the data sites $\Lambda^*$ and show that \eref{goal} is valid. For this purpose,  we introduce the distribution function
    \begin{equation}
    \label{mut}
        \mu(t) := |A_t|, \quad A_t:=\{x\in \Omega \;:\; g(x) - f(x) > t\}, \quad t\geq 0.
    \end{equation}
The following lemma gives the bound we shall prove for $\mu$.
\begin{lemma}  Let $f,g\in\cL$ satisfy \eref{assume1}.  Then, the following bounds hold for $\mu$:
\label{L:boundmu}
    \begin{equation}
        \mu(t) \lesssim \begin{cases}
            0, & t \geq Cn^{-1/d},\\
            t^{-d}\left(\frac{\log{n}}{n}\right)^2, & cn^{-2/d}(\log{n})^{2/d} < t < Cn^{-1/d},\\
            1, & t \leq c n^{-2/d}(\log{n})^{2/d},
        \end{cases}
        \label{3cases}
    \end{equation}
    where the constants $C$ and $c$  depend only on $d$.
   
\end{lemma}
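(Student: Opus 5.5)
The plan is to exploit that $g$ is convex and Lipschitz, while $f$ is convex and touches $g$ at the data sites. Property 2 of $\Lambda_n^*$ and Property 1 (Proposition \ref{property-1-proposition}) are used together with the geometry of the set $A_t$.

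\emph{First and third cases.} Both are simple. For $x\in\Omega$, Property 2 gives a data site $x^i$ with $|x-x^i|\le Cn^{-1/d}$. Since $f(x^i)=g(x^i)$ and both functions are $1$-Lipschitz,
\[
0\le g(x)-f(x)\le 2|x-x^i|\le 2Cn^{-1/d}.
\]
Hence $\mu(t)=0$ for $t\ge Cn^{-1/d}$ with a suitable $C$. The bound $\mu\le|\Omega|=1$ is trivial, and it is all the third regime needs.

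\emph{Middle case, set-up.} Fix $t$ in the middle range and a point $x_0\in A_t$. Pick a subgradient $v\in\partial f(x_0)$; then $|v|\le 1$. Consider
\[
S:=\{x\in\Omega:\ g(x)\le f(x_0)+v\cdot(x-x_0)+t\}.
\]
This set is convex because $g$ is convex and the right-hand side is affine. On ${\rm int}(S)$ we have strict inequality, and, using $f\ge$ its supporting plane at $x_0$,
\[
g(x)<f(x_0)+v\cdot(x-x_0)+t\le f(x)+t\quad\text{for }x\in{\rm int}(S).
\]
So $g-f<t$ there, whereas $g-f=0$ at data sites. The difficulty is that this does not exclude data sites from ${\rm int}(S)$, so the set must be chosen the other way around. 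Instead I take
\[
S:=\{x:\ g(x)\le \ell(x)\},\qquad \ell(x):=f(x_0)+v\cdot(x-x_0)+t/2,
\]
and work with the affine minorant of $f$ shifted up. At a data site $x^i$ we have $g(x^i)=f(x^i)\ge \ell(x^i)-t/2$, which again does not immediately forbid $x^i\in S$. The fix is to reverse roles: set $\ell_0(x):=g(x_0)+w\cdot(x-x_0)$ with $w\in\partial g(x_0)$, so $\ell_0$ is a supporting plane of $g$, and let
\[
S:=\{x:\ f(x)<\ell_0(x)-t/2\}.
\]
This set is convex because $f$ is convex. It contains no data site, since there $f=g\ge\ell_0$. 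It contains $x_0$, since $f(x_0)<g(x_0)-t$. Because $f$ and $\ell_0$ are $1$-Lipschitz (so the difference $f-\ell_0$ is $2$-Lipschitz), $S$ contains the ball $B(x_0,t/4)\cap\Omega$. Property 1 therefore gives
\[
|S|\le C\frac{\log n}{n}.
\]

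\emph{Middle case, main obstacle.} The key step is to convert this into the bound $t^{-d}(\log n/n)^2$ for $|A_t|$. The idea is that the convex region $\{f<\ell_0-t/2\}$, which has volume at most $\varepsilon:=C\log n/n$, contains the ball of radius $\asymp t$ around every point of $A_t$ with the same supporting plane. The first attempt is a covering argument. Choose a maximal $t/8$-separated subset $\{x_j\}$ of $A_t$. Each $x_j$ gives a convex set $S_j$ of volume $\le\varepsilon$ containing $B(x_j,t/4)$, and on it $g-f$ exceeds $t/2$ (because $\ell_0\le g$). So each $S_j\subset A_{t/2}$. This only gives $\mu(t)\lesssim\#\{x_j\}\,t^d$, so I need the count $\#\{x_j\}\lesssim t^{-2d}\varepsilon^2$. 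The intended route is to use $|S_j|\le\varepsilon$ together with the inclusion $B(x_j,t/4)\subset S_j$. The set $S_j$ lies below the $2$-Lipschitz graph of depth at least $t/2$ at $x_j$, and it is thin: by Lemma \ref{simplex-ontainment-lemma-polyhedron} its width in some direction is $\lesssim\varepsilon t^{1-d}$. Along that thin direction, convexity of $g-\ell_0$ forces $g-f$ to fall off, and I expect a slicing/induction argument to bound the total measure by $t^{-d}\varepsilon^2$. I expect this counting step to be the main difficulty. The lower threshold $t\gtrsim(\log n/n)^{1/d}\cdot n^{-1/d}$ is exactly where $t^{-d}\varepsilon^2=1$, which matches the stated regimes.
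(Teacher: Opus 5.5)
\textbf{There is a genuine gap: the middle case, which is the whole content of the lemma, is not proved.}

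Your first and third cases are correct. The set you finally settle on, $\{f<\ell_0-t/2\}$ with $\ell_0$ a supporting plane of $g$ at $x_0$, is essentially the paper's $S(x)=\{y\in\Omega: f(y)\le g(x)+u\cdot(y-x)\}$. It is convex, contains no data site, contains a ball of radius $\asymp t$ about $x_0$, and so has volume at most $C\log n/n$ by Property 1.

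You stop at ``I expect a slicing/induction argument,'' and the route you set up cannot work as stated. You cover $A_t$ by balls around a maximal $t/8$-separated set and ask for $\#\{x_j\}\lesssim t^{-2d}\varepsilon^2$. This is circular: the balls $B(x_j,t/16)\cap\Omega$ are disjoint, each has volume $\gtrsim t^d$, and they lie in $A_{t/2}$. So that count bound is equivalent to the measure bound you are trying to prove, at level $t/2$. The thin-width observation says nothing about how many distinct such sets are needed, which is the real issue. A small slip as well: $t^{-d}\varepsilon^2=1$ at $t=\varepsilon^{2/d}\asymp n^{-2/d}(\log n)^{2/d}$, not at $n^{-2/d}(\log n)^{1/d}$.

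\textbf{The missing idea.} Cover $A_t$ by the convex sets $S(z^i)$ themselves, chosen greedily with $z^{i+1}\in A_t\setminus\bigcup_{j\le i}S(z^j)$. Then $\mu(t)\le k\,C\log n/n$, and you only need $k\lesssim t^{-d}\log n/n$. Count these sets in subgradient space by duality. To each $x\in A_t$, with $u\in\partial g(x)$, the paper attaches $S^*(x)=u+(t/3)(S(x)-x)^\circ$. Three facts finish the argument.
\begin{itemize}
\item[(a)] By Mahler's inequality $|S||S^\circ|\ge c$, one gets $|S^*(x)|\ge c\,t^d/|S(x)|\ge c\,t^d n/\log n$. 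Small volume in $x$-space becomes large volume in gradient space.
\item[(b)] Since $S(x)\supset B(x,t/2)$ (after a dilation fix near $\partial\Omega$) and $|u|\le 1$, every $S^*(x)$ lies in $B(0,5/3)$.
\item[(c)] If $x'\notin S(x)$, then $S^*(x)\cap S^*(x')=\emptyset$. The proof restricts to the line through $x$ and $x'$. There, convexity of $f$ and $g$ together with $g-f>t$ at both points forces the slope jump $(u'-u)\cdot e$ to exceed $t$ divided by the distance from $x$ (respectively $x'$) to the endpoint of $S(x)$ (respectively $S(x')$) on that line. A common point $v$ of both dual sets forces the same jump to be at most $\tfrac23$ of that quantity, a contradiction.
\end{itemize}
Disjointness together with (a) and (b) gives $k\,c\,t^d n/\log n\le |B(0,5/3)|$, which is exactly the required count. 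Without an argument of this kind limiting how many mutually ``non-covering'' points $A_t$ can contain, your proposal establishes only $\mu(t)\le 1$ and $\mu(t)=0$ for $t\gtrsim n^{-1/d}$.
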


Before proceeding to the proof of this lemma, let us show how this  lemma proves \eref{goal} and therefore proves Theorem \ref{T:further-improved}.   We have
\begin{equation}
    \label{Lp}
        \|g - f\|^p_{L_p(\Omega)} = p\int_0^\infty t^{p-1}\mu(t)dt \lesssim \left(\delta^{2p/d} + \delta^2\int_{c\delta^{2/d}}^{Cn^{-1/d}}t^{-d+p-1}dt\right), \quad \delta := \frac{\log{n}}{n},
    \end{equation}
    where the constant in $\lesssim$ depends on $d$ and $p$.
    Evaluating the latter integral gives
    \begin{equation}
    \label{Lp1}
        \int_{c\delta^{2/d}}^{Cn^{-1/d}}t^{-d+p-1}dt \lesssim \begin{cases}
           n^{-(\frac pd-1)}, & p > d,\\
            |\log \delta|, & p = d,\\
            \delta^{-2 + \frac{2p}d}, & p < d,
        \end{cases}
    \end{equation}
    where again, the constant in $\lesssim$ depends only on $d$ and $p$.
    Estimates  \eref{Lp} and \eref{Lp1}   give 
    \iref{goal}.

 \subsection{Proof of Lemma \ref{L:boundmu}}

The last statement in \iref{3cases} is trivial since $|\Omega|\le1$. To prove the first statement of the lemma, we  note that since the set $\Lambda_n^*$ satisfies Property 2,  and $f,g\in \cL$, we have 
 \be
 t \geq 2Cn^{-1/d} \implies \mu(t)=0,
 \label{boundt}
 \ee
where $C$ is the constant in \iref{cover}. Indeed, due to Property 2, 
for every $x\in \Omega$, we can find $x^i\in \Lambda^*$, such that 
$|x-x^i|\leq Cn^{-1/d}$. Since $f(x^i)=g(x^i)$ and $f,g\in \cL$, we have
$$
g(x)-f(x)=(g(x)-g(x^i))-(f(x)-f(x^i))\leq 2|x-x^i|\leq 2Cn^{-1/d},\quad x\in \Omega,
$$
and thus, if $x\in A_t\neq \emptyset$, then $t< 2Cn^{-1/d}$.
This observation also shows that 
\be
\|f-g\|_{L_\infty}\leq 2Cn^{-1/d},
\ee
which is the optimal decay rate for $\rho_n(\cL)_{L_\infty}$ that was already established in Theorem \ref{T:tensorL}.

In going further, we therefore focus on values of $t$ with   
\be 
\label{trange}
 cn^{-2/d}(\log{n})^{2/d} < t < Cn^{-1/d}.
 \ee
We fix such a value of $t$ and   bound  $\mu(t)=|A_t|$ by covering $A_t$ by a finite collection of sets 
 $S(x)$ defined as follows.  Given any $x\in \Omega$, we pick and fix a $u\in \partial g(x)$ and introduce the set
    \begin{equation}
    \label{Sx}
      S(x) := \{y\in \Omega: f(y) \leq g(x) + u\cdot (y - x)\}.
    \end{equation}
Here, we have suppressed the dependence of $S(x)$ on $u$ and all subsequent arguments do not depend on the particular choice of $u$. The mapping 
$y\mapsto g(x)+u\cdot (y-x)$ is any affine function lying below $g$ and interpolating $g$ at $x$.

   We begin by observing some important properties of the sets $S(x)$, $x\in\Omega$.  For any $x\in\Omega$, we have  $x\in S(x)$, $S(x)$ is closed,  and $S(x)$ is a convex set since the mapping $y\mapsto f(y)-u\cdot (y - x)$ is a convex function.
   Second, whenever  $g(x) > f(x)$, we have     
  \be
  f(y) < g(x) + u\cdot (y-x)\leq g(y),\quad  y\in \text{int}(S(x)),
  \ee 
  because $g$ is convex.
  Hence,  $\text{int}(S(x))$ doesn't contain any of the data sites $x^i$. Therefore, from Property 1, we have (see \eref{Ssize}),
    \begin{equation}\label{S-x-upper-bound-equation-1665}
        |S(x)| \leq C\frac{\log{n}}{n}, \quad x\in A_t.
    \end{equation}
    Our third observation is that if $x\in A_{t}$ and $y\in B(x,t/2)$, one has
 \be
 f(y) \leq f(x)+\frac t 2
 \leq g(x)-\frac t 2 \leq  g(x)+u\cdot(y-x),
 \ee
 where we have used the fact that $f$ and $g$ are both Lipschitz with constant $1$, and in particular that $|u|\leq 1$. Thus, we have
 \begin{equation}
B(x,t/2)\cap \Omega \subset S(x), \quad \hbox{when}\quad x\in A_t.
\label{ballinter}
\end{equation}
In particular, $x\in \text{int}(S(x))$.
 
Our strategy to estimate the size of $A_t$ is to cover it with sets $S(z^i)$ for properly chosen points $z^i\in A_t$. We proceed
in an inductive manner. Beginning with $A_t^0 := A_t$, we construct points $z^i$ as follows:
    \begin{enumerate}
        \item If $A_t^i = \emptyset$ the construction ends;
        \item Otherwise, choose any element $z^{i+1}\in A_t^i$;
        \item Set $A_t^{i+1} = A_t^i \setminus S(z^{i+1})
        =A_t\setminus(S(z^{1})\cup \cdots \cup S(z^{i+1}))$.
    \end{enumerate}  
    \vskip .1in
    \noindent
    {\bf Construction Claim:}  
{\it We claim that the above construction ends after $k$ steps with 
       \begin{equation}
        k \leq Ct^{-d}\frac{\log{n}}{n},
    \end{equation}
    where $C$  depends only on $d$.}
    
    \vskip .1in

    Assuming for the moment that the above claim is true, we have
\be
  A_t\subset \bigcup_{i = 1}^k S(z^i),
 \ee
 and thus by \eqref{S-x-upper-bound-equation-1665}, since $z^i\in A_t$,  it follows that 
 \be
 \label{mutA}
 \mu(t) = |A_t| \leq C k\frac {\log n}{n},
 \ee
 which finishes the proof of Lemma \ref{L:boundmu} and therefore, as we observed above it also finishes the proof of Theorem \ref{T:further-improved}.  Accordingly, the remainder of this section is devoted to proving the above construction claim.

 For this purpose, we will use the concept of the polar $S^\circ$ of a closed convex set
$S\subset \mathbb{R}^d$. When   $0\in \text{int}(S)$, $S^\circ$ is
defined as the convex set
\begin{equation}
    S^\circ := \{z\in \mathbb{R}^d : z\cdot y \leq 1~\text{for all $y\in S$}\}.
\end{equation}
A fundamental fact, first proved by Mahler \cite{mahler1939ubertragungsprinzip}, see also Remark~\ref{remarkmahler} below, is that 
\begin{equation}\label{mahlers-inequality}
    |S||S^\circ| \geq c,
\end{equation}
where $c$ depends only on $d$. For any $x\in A_t$, we consider the set
    \begin{equation}\label{definition-of-S-circ}
        S^*(x) := u + (t/3)(S(x) - x)^\circ = \{v\in \mathbb{R}^d:~(v-u)\cdot (y-x) \leq t/3~\text{for all $y\in S(x)$}\},
    \end{equation}
where $u\in \partial g(x)$ is the subgradient, used in the definition of $S(x)$, see \eref{Sx}. It follows from \eqref{mahlers-inequality} 
that 
    \begin{equation}
        |S^*(x)| = (t/3)^d |(S(x) - x)^\circ|
        \geq c(t/3)^d  |S(x)|^{-1}.
            \end{equation}
Combining this estimate with \eqref{S-x-upper-bound-equation-1665}, gives  that for $x\in A_t$, we have 
\be
\label{polar-lower-bound-1994}
|S^*(x)| \geq c t^{d}\frac n{\log n},
\ee
where $c$ depends only on $d$.
 The following technical lemma forms the crux of the proof of the construction claim. It will be used to bound $k$ in \eref{mutA} and explains the choice of $t/3$ in \eqref{definition-of-S-circ}.
    \begin{lemma}\label{empty-intersection-polar-lemma}
        If $x,x'\in A_{t}$, and $x'\notin S(x)$, then $S^* (x) \cap S^* (x') = \emptyset$. 
    \end{lemma}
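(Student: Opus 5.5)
The plan is to argue by contradiction. Suppose $v\in S^*(x)\cap S^*(x')$, and let $u\in\partial g(x)$ and $u'\in\partial g(x')$ be the subgradients used to define $S(x)$, $S^*(x)$ and $S(x')$, $S^*(x')$.

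Everything will be restricted to the segment joining $x$ and $x'$. Set $w:=x'-x$ and consider the univariate functions of $\tau\in[0,1]$
$$h(\tau):=f(x+\tau w),\qquad \ell(\tau):=g(x)+\tau\, u\cdot w,\qquad \ell'(\tau):=g(x')+(\tau-1)\,u'\cdot w .$$
Here $h$ is convex and $1$-Lipschitz in $\tau|w|$, and both affine functions $\ell,\ell'$ lie below $g(x+\tau w)$. The traces of $S(x)$ and $S(x')$ on the segment are $\{h\le \ell\}$ and $\{h\le\ell'\}$. These are intervals $[0,s]$ and $[\sigma,1]$, because $0$ and $1$ are interior points of the respective sets by \eref{ballinter}. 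The hypothesis $x'\notin S(x)$ means $h(1)>\ell(1)$, so $s<1$ and $h(s)=\ell(s)$.

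The first step is to produce a slope gap in the direction $w$. From $x'\notin S(x)$ and $x'\in A_t$ we get
$$g(x')>f(x')+t=h(1)+t>\ell(1)+t=g(x)+u\cdot w+t.$$
The subgradient inequality for $g$ at $x'$ gives $g(x)\ge g(x')-u'\cdot w$. Combining the two yields $(u'-u)\cdot w>t$.

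The second step is to turn membership of $v$ in both polars into an upper bound on $(u'-u)\cdot w$. I will test the definition \eqref{definition-of-S-circ} with suitable points $y\in S(x)$ and $y\in S(x')$ on the segment. Taking $y=x+sw\in S(x)$ gives $s(v-u)\cdot w\le t/3$. Taking $y=x+\sigma w\in S(x')$ gives $(1-\sigma)(u'-v)\cdot w\le t/3$. Adding these would bound $(u'-u)\cdot w$ by $\frac t3\big(\frac1s+\frac1{1-\sigma}\big)$.

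To contradict the first step, this has to be sharpened so that the combined bound does not exceed $t$. The natural way is to use function values, not just the endpoints $s,\sigma$. I will reformulate $v\in S^*(x)$ as saying that the affine function $a(\tau):=g(x)+\tau\,v\cdot w$ satisfies $a\le \ell+t/3$ on $[0,s]$. Similarly, $a'(\tau):=g(x')+(\tau-1)\,v\cdot w$ satisfies $a'\le \ell'+t/3$ on $[\sigma,1]$. Since $a$ and $a'$ are parallel, they differ by the constant $g(x')-g(x)-v\cdot w$. I then plan to evaluate everything at a crossing point $\tau_0$ where $\ell(\tau_0)=\ell'(\tau_0)$; such a point exists because $\ell(0)\ge\ell'(0)$ and $\ell(1)<\ell'(1)$. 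At $\tau_0$ I will use convexity of $h$ together with $h(0)<\ell(0)-t$ and $h(1)<\ell'(1)-t$, which holds since $x,x'\in A_t$. The goal is to show that $\tau_0$ lies in both traces and that there the vertical gaps accumulate to more than $2t/3$. Meanwhile $v\in S^*(x)\cap S^*(x')$ allows a total slack of only $t/3+t/3$, and the strict inequality from the first step should then force a contradiction.

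The main obstacle is precisely this last step: showing that the crossing point, or some other well-chosen point, belongs to both $S(x)$ and $S(x')$ so that the two polar constraints can be applied at a common location. If the crossing-point argument fails, I would fall back on the ball inclusions \eref{ballinter}. These give $[0,\frac{t}{2|w|}]$ in the first trace and $[1-\frac{t}{2|w|},1]$ in the second, in the spirit of the factor $1/3$ versus $1/2$. I would combine them with the quantitative lower bound on $\ell-h$ near the endpoints, obtained from the Lipschitz bound $|u|,|u'|\le1$, to extract the missing factor.
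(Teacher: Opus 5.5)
Your setup and your first two steps are correct, and they are the same ingredients the paper uses. Your $s$ and $1-\sigma$ are the paper's $\alpha$ and $\beta$, and your bound $(u'-u)\cdot w\le \frac t3\bigl(\frac1s+\frac1{1-\sigma}\bigr)$ is the paper's upper bound.

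The gap is in the last step, where you try to sharpen the wrong inequality. Along the segment, the polar constraints are already used in full by testing at $\tau=s$ and $\tau=\sigma$, so that upper bound cannot be improved. What is too weak is your lower bound $(u'-u)\cdot w>t$: it only uses $h(1)>\ell(1)$ and never uses convexity of $f$.

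Your crossing-point plan cannot be repaired, because the two traces need not share any point. Take $f(y)=\frac{y_1}4-\frac1{32}$, $g(y)=\frac{y_1^2}2$, $x=0$, $x'=e_1$ (with $u=0$, $u'=e_1$) and $t=\frac1{40}$. Then $f,g\in\cL$, $g\ge f$, $x,x'\in A_t$ and $x'\notin S(x)$. Yet $S(x)=\{y\in\Omega:\,y_1\le\frac18\}$ and $S(x')=\{y\in\Omega:\,y_1\ge\frac58\}$ are disjoint. At the crossing point $\tau_0=\frac12$ one has $h(\tau_0)=\frac3{32}>0=\ell(\tau_0)=\ell'(\tau_0)$.

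The ball-inclusion fallback does not rescue this either. It only gives $s,\,1-\sigma\ge t/(2|w|)$, hence $(u'-u)\cdot w\le\frac43|w|$, which does not contradict $(u'-u)\cdot w>t$.

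The missing idea is to apply convexity of $h$ at the endpoint $s$ of the first trace, where $h(s)=\ell(s)$:
\begin{equation*}
g(x)+s\,u\cdot w=h(s)\le(1-s)h(0)+s\,h(1)<(1-s)g(x)+s\,g(x')-t .
\end{equation*}
This gives $s\,(g(x')-g(x)-u\cdot w)>t$. Since $g(x')-g(x)\le u'\cdot w$, it follows that $(u'-u)\cdot w>t/s$.

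Symmetrically, if $\sigma>0$, then $h(\sigma)=\ell'(\sigma)$ together with $g(x')-g(x)\ge u\cdot w$ gives $(u'-u)\cdot w>t/(1-\sigma)$. For $\sigma=0$ this is just your first-step bound. Hence
\begin{equation*}
(u'-u)\cdot w>t\max\Bigl\{\frac1s,\frac1{1-\sigma}\Bigr\}.
\end{equation*}
Your upper bound gives $(u'-u)\cdot w\le\frac t3\bigl(\frac1s+\frac1{1-\sigma}\bigr)\le\frac{2t}3\max\{\frac1s,\frac1{1-\sigma}\}$, which is a contradiction.

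This is exactly the paper's argument. It needs no common point of $S(x)$ and $S(x')$; the lower bound simply has to carry the same factors $1/s$ and $1/(1-\sigma)$ as the upper bound.
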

    \begin{proof}
       Let $u\in \partial g(x)$ and $u'\in \partial g(x')$ denote the subgradients  which define $S(x)$ and $S(x')$, respectively. We will assume that there exists $v\in S^*(x)
\cap S^*(x')$ and reach a contradiction.

First, we observe that it suffices to work in the univariate case. Indeed, for the general case $d\geq 1$, let us  restrict our attention to the line $L$ determined by  $x$ and $x'$ (which are distinct since $x'\notin S(x)$). We pick a point $x_0\in L\cap\Omega$,
and set $e:=|x'-x|^{-1}(x'-x)$, so that  $L=\{y:\,y=x_0+se, \,s\in \R\}$.
Then, the convex univariate functions 
\be
\label{uni}
s\mapsto f(x_0+se)=:f_0
(s), \quad 
s\mapsto g(x_0+se)=:g_0(s),
\ee
which are defined on a bounded interval, satisfy the exact same assumptions
as $f$ and $g$, when $x$ and $x'$ are replaced by 
the points $s_0:=(x-x_0)\cdot e$ and $s_0':=(x'-x_0)\cdot e$, noting that  $u\cdot e$ and $u'\cdot e$ are subgradients of $f_0$ and $g_0$ at these points, respectively. Note  that $x=x_0+s_0e$, $x'=x_0+s'_0e$,  $g(x)=g_0(s_0)$, and  $g(x')=g_0(s'_0)$. 

If $v$ belongs to $S^*(x)\cap S^*(x')$, then it is easily seen that $v\cdot e$ belongs to its univariate counterpart $S^*(s_0)\cap S^*(s_0')$. Indeed, if $v\in S^*(x)$, then for all $y\in S(x)$,  we have
$(v-u)(y-x)\leq t/3$.
Note that 
\begin{eqnarray}
\nonumber
S(x)&=&\{y\in \Omega:\,f(y)\leq g(x)+u\cdot(y-x)\}
=
\{y\in \Omega:\,f(y)\leq g_0(s_0)+u\cdot(y-x_0-s_0e)\}
\end{eqnarray}
We can then    choose $y=x_0+se\in S(x)$, to obtain, 
$$
 \{x_0+se\in \Omega:\,f_0(s):=f(x_0+se)\leq g_0(s_0)+u\cdot e(s-s_0)\}\subset S(x).
$$
Moreover, since $s_0e=x-x_0$, we see that
$$
(v\cdot e-u\cdot e)(s-s_0)=
(v-u)(x_0+se-x)=(v-u)(y-x)\leq t/3,
$$
holds for all $s\in \{s:
x_0+se\in \Omega, \,\,f_0(s)\leq g_0(s_0)+u\cdot e(s-s_0)\}=S(s_0)$. Thus, $v\cdot e\in S^*(s_0)$. Similarly, one can show that if $v\in S^*(x')$, then $v\cdot e\in S^*(s')$.

In view of the above considerations,  we proceed by working in the case $d=1$, with
$\Omega$ being a  bounded closed interval, assuming that $x<x'$. The proof when $x'<x$ is similar. The sets $S(x),S(x'),S^*(x),S^*(x')$ are closed intervals. We also observe that, since $x'\notin S(x)$ and $x\in S(x)$,
the point $\t x:=\max S(x)$ satisfies $x<\t x<x'$. 
We also define $\o x:=\max\{x,\min S(x')\}<x'$, since $x'\in\text{int}(S(x'))$.
We then write
\be
\t x=\alpha x'+(1-\alpha)x \quad {\rm and} \quad 
\o x=\beta x+(1-\beta)x',
\ee
for some $0<\alpha<1$ and $0<\beta\leq 1$.
Note that $\o x=x$ and $\beta=1$ 
if and only if $x\in S(x')$.

Since $\t x$ is the right end point of $S(x)$, one has
$f(\t x)=g(x)+u(\t x-x)$, and therefore, by the convexity
of $f$,
\be
\label{eq1}
\alpha f(x')+(1-\alpha)f(x)\geq f(\t x)
=g(x)+u(\t x-x).
\ee
Next, using that $x,x'\in A_t$ and \eref{eq1}, we obtain
\be
\alpha g(x')+(1-\alpha)g(x)
\geq \alpha f(x')+(1-\alpha)f(x)+t
\geq g(x)+u(\t x-x)+t.
\ee
Using the convexity of $g$ and since $\t x-x=\alpha(x'-x)$,
we thus find that
\be
t\leq \alpha(g(x')-g(x))-
\alpha u(x'-x)\leq  \alpha(u'-u)(x'-x),
\ee
which reveals a lower bound
\be
u'-u\geq \frac t {\alpha(x'-x)}.
\label{lowerjump1}
\ee
When $\beta<1$, we  know that $\o x$ is the left endpoint  of $S(x')$, and we obtain by a similar 
reasoning that 
\be
t\le \beta (g(x)-g(x'))+ \beta u'(x'-x)\leq  \beta(u'-u)(x'-x).
\ee
We thus have the improved lower bound
\be
u'-u\geq \frac t{x'-x} \max\{\alpha^{-1},\beta^{-1}\}, 
\label{lowerjump2}
\ee
which also holds when $\beta=1$, since it then reduces
to \iref{lowerjump1}.

On the other hand, since $\t x\in S(x)$
and $\o x\in S(x')$, and in view of the definition
of $S^*(x)$ and $S^*(x')$, one has for any $v$ in their intersection the framing
\be
u'-\frac t {3\beta(x'-x)}=u'-\frac t {3(x'-\o x)}\leq v \leq u+\frac t {3(\t x-x)}=u+\frac t {3\alpha(x'-x)},
\ee
and therefore
\be
u'-u\leq \frac t{3(x'-x)}(\alpha^{-1}+\beta^{-1})
\leq \frac 2 3  \frac t{x'-x} \max\{\alpha^{-1},\beta^{-1}\},
\ee
which  contradicts \iref{lowerjump2}.
    \end{proof}

Since the construction process of the $z^i$'s ensures that 
$z^j\notin S(z^i)$ if $i < j$, and all $z^i\in A_t$, the above lemma implies that
\be
\label{emptyintersect}
S^*(z^i)\cap S^*(z^j) = \emptyset, \quad i\neq j.
\ee
On the other hand, we know from \iref{ballinter} that if $x\in A_t$ and 
if $B(x,t/2)\subset \Omega$, 
then $B(x,t/2)\subset S(x)$, or equivalently
\be
B(0,t/2)\subset S(x)-x.
\ee
It follows that 
\be
(S(x)-x)^\circ \subset B(0,t/2)^\circ =B(0,2/t).
\ee
By rescaling and translating by $u$ which has norm at most $1$ (since $g$ is $1$-Lipschitz),
we find that
\be
S^*(x)\subset u+B(0,2/3) \subset B(0,5/3).
\ee
This containment is no longer guaranteed for those $x\in A_t$ with distance less than $t/2$
from $\partial \Omega$. For such $x$, we will use the fact that there exists a larger
convex set $\t S(x)\supset S(x)$ such that 
\be
B(x,t/2)\subset \t S(x) \quad {\rm and} \quad |\t S(x)| = C |S(x)|,
\label{Stilde}
\ee 
where $C$  depends only on $d$. We postpone the construction of $\t S(x)$ to the end of the proof, and set 
$\t S^*(x):=u+(t/3)(\t S(x)-x)^\circ$,
so that we have the containment
\be
\tilde S^*(x) \subset B(0,5/3).
\ee
By convention,  we set $\t S(x):=S(x)$ for those $x\in A_t$ such that $B(x,t/2)\subset \Omega$.
Note that for all $x\in A_t$ we have the upper bound
\be
|\t S(x)|\leq C \frac{\log n}{n},
\ee
and therefore the lower bound
\be
|\tilde S^*(x)| \geq ct^{-d}\frac {n}{\log n},
\ee
where $C$ and $c$  depend only on $d$.
Since we have $\tilde S^*(z^i) \subset S^*(z^i)$ for all $i=1,\dots,n$, we find 
by \iref{emptyintersect} that we also have
\be
\label{emptyintersecttilde}
\t S^*(z^i)\cap \t S^*(z^j) = \emptyset, \quad i\neq j,
\ee
and therefore 
\be
ck t^{d}\frac {n}{\log n}\leq \sum_{i=1}^k |\t S^*(z^i)| \leq C:=|B(0,5/3)|.
\ee
This yields an upper bound for $k$, namely,
       \begin{equation}
        k \leq Ct^{-d}\frac{\log{n}}{n},
    \end{equation}
    where $C$  depends only on $d$.

    We conclude the proof by explaining the construction of $\t S(x)$ from $S(x)$ in the case
    where $x\in A_t$ is at distance less than $t/2$ from $\partial \Omega$. We first observe that,
    trivially $\|f-g\|_{L_\infty}\le 2$, so that     
    we may always assume that $t\leq 2$.
    Since $x\in \Omega$, 
    we can always find a $x'\in \Omega$ such that 
    \be
   |x-x'|=\min\{1,t\}/4 \quad{\rm and}\quad {\rm dist}(x',\partial\Omega) \geq \frac{\min\{1,t\}}{4\sqrt d}\ge \frac{t}{8\sqrt{d{}}}:=ct.
   \label{xprime}
   \ee
Observe that 
        \begin{equation}
            B(x',ct)\subset B(x,t/2)\cap \Omega\subset S(x).
            \label{contxprime}
        \end{equation}
 We then define the desired set by
         \begin{equation}
            \t S(x) := x' + \frac{3}{4c}(S(x) - x')=x' + 6\sqrt d(S(x) - x'),
        \end{equation}
        which obviously contains $S(x)$ since $6\sqrt d\geq 1$ and whose 
         size is
        \be
        |\t S(x)|=C|S(x)|, \quad C:=(6\sqrt d)^d.
        \ee
 Finally, we find  from \iref{contxprime} that $B(0,ct)\subset S(x) - x'$, so that
 $B(x',3t/4)\subset \t S(x)$, and therefore
 \be
 B(x,t/2)\subset B(x',3t/4) \subset   \t S(x),
 \ee
 which confirms \iref{Stilde} and ends the proof of the construction claim.

 \begin{remark}
 \label{remarkmahler}
 In \cite{mahler1939ubertragungsprinzip}, Mahler apparently only proved \eqref{mahlers-inequality} for convex and centrally symmetric bodies. However, it follows easily from this that the same inequality holds for closed convex bodies containing $0$ in their interior. Indeed, suppose that $S$ is a closed convex body with $0\in \text{int}(S)$. Let $z^1$ be the  point in $S$ with a largest norm $|z^1|$, and choose sequentially points $z^i\in S$ whose distance from $\text{span}(z^1,...,z^{i-1})$ is maximized. Then $S$ contains the simplex $\Delta$ with vertices $z^0 = 0,z^1,...,z^d$, whose volume is
\begin{equation}
    |\Delta|=\frac{1}{d!}\prod_{i=1}^d\dist(z^i,\text{\normalfont span}(z^1,...,z^{i-1})),
\end{equation}
and therefore $|S|\geq |\Delta|$.
On the other hand, the symmetric convex hull 
$$
S_{\rm sym}:=\text{conv}(S\cup -S) \subset \cR
$$
is contained in a rectangle $\cR$ with side lengths $l_i = 2\dist(z^i,\text{span}(z^1,...,z^{i-1}))$, and thus
$$
2^dd!|\Delta|=2^d\prod_{i=1}^d\dist(z^i,\text{\normalfont span}(z^1,...,z^{i-1}))=|\cR|\geq |S_{\rm sym}|
$$
Hence, it follows that $|S| \geq \t c|S_{\rm sym}|$ 
with $\t c:=(2^d d!)^{-1}$. On the other hand, since $S\subset S_{\rm sym}$,  it follows from the definition that $S^\circ \supset S_{\rm sym}^\circ$. Hence
\begin{equation}
    |S||S^\circ| \geq \t c|S_{\rm sym}||S_{\rm sym}^\circ| \geq  \t cc,
\end{equation}
where $c$ is the constant from \cite{mahler1939ubertragungsprinzip} for convex and centrally symmetric bodies. 
We note that determining the optimal constant $c$, both in the case of general or centrally symmetric convex sets, is called Mahler's conjecture, and is a well-known and exceptionally difficult open problem, which was solved by Mahler only in the case $d = 2$.
\end{remark}

\section{Sharp upper bounds for the sampling numbers of $\cB$}
\label{S:optimalB}

In this section, for each fixed $d$, we show that the estimates in Theorem \ref{T:further-improved} for the class $\cL:=\cL(\Omega)$,  $\Omega:=\Omega_d$, are also valid for the
larger class $\cB:=\cB(\Omega)$. Namely, the following theorem holds.

\begin{theorem}\label{T:further-improvedB}
    Let $d \geq 2$ and $1 < p < \infty$. Then,  for any $n \geq 2$,  we have
    \begin{equation}\label{improved-upper-bound-equationB}
    \rho_n(\cB)_{L_p} \leq C \kappa(n,p,d),
\end{equation}
where $\kappa(n,p,d)$ is defined in  \eref{improved-upper-bound-equation} and $C$ depends only on $d$ and $p$.
\end{theorem}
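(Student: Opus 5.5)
We prove this by reducing to the argument for $\cL$ in \S\ref{S:optimal}, using a point set that is denser near $\partial\Omega$. The point is that a function $f\in\cB$ is Lipschitz on interior subcubes, with a constant that blows up only near the boundary. Indeed, as in \eref{uplow1}, for $x$ with $\delta(x):=\dist(x,\partial\Omega)$ every $v\in\partial f(x)$ satisfies $|v|\le C/\delta(x)$.

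\textbf{Step 1: dyadic boundary layers.} For $j=0,1,\dots,J$ let
$$
\Omega^j:=\{x\in\Omega:\ 2^{-j-1}<\delta(x)\le 2^{-j}\},
$$
so that $|\Omega^j|\lesssim 2^{-j}$ and $\Omega^j$ is a union of $\lesssim 2^{j(d-1)}$ cubes $Q$ of side $\asymp 2^{-j}$, each with $\dist(Q,\partial\Omega)\gtrsim 2^{-j}$. On a slightly enlarged cube $Q^*\subset\Omega$, every $f\in\cB$ is convex, bounded by $1$ and Lipschitz with constant $\lesssim 2^j$. After the affine change of variables $x\mapsto 2^{j}(x-x_Q)$ and division by a constant, $f|_{Q^*}$ becomes an element of $\cL$ on a unit cube. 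We use a function value scale of $1$ and a gradient scale of $1$, since $2^j\cdot 2^{-j}=1$.

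\textbf{Step 2: sample allocation and local recovery.} In each cube $Q\subset\Omega^j$ we place a point set $\Lambda^*_{n_Q}$ from Proposition \ref{property-1-proposition}, transported to $Q$, with $n_Q$ points. We also add a few points so that Property 1 holds on $Q^*$ and not only on $Q$. Take any two functions $f,g\in\cB$ that agree on the samples, with $g\ge f$ after replacing them by $\max\{f,g\}$. The rescaled difference on $Q$ is controlled by the analysis of Lemma \ref{L:boundmu} and Theorem \ref{T:further-improved}. Undoing the scaling gives
$$
\|f-g\|_{L_p(Q)}^p\lesssim |Q|\,\kappa(n_Q,p,d)^p .
$$
This is the step I expect to be the main obstacle. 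The sets $S(x)$ in the proof of Lemma \ref{L:boundmu} are defined relative to all of $\Omega$ and may extend outside $Q$. To get around this, I would intersect them with $Q^*$ and use that the functions are uniformly Lipschitz there. Property 1 then only needs to hold within $Q^*$, and the Lipschitz bound on $u$ (used in $S^*(x)\subset B(0,5/3)$) holds because $x\in Q$ sits well inside $Q^*$.

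\textbf{Step 3: summation and cutoff.} We choose $n_Q\asymp n\,2^{-j\gamma}|Q|$, with a small $\gamma>0$ such that the total satisfies $\sum_Q n_Q\lesssim n$. Summing the local bounds then gives
$$
\sum_j 2^{-j}\,\kappa\bigl(n2^{-j(\gamma+d)},p,d\bigr)^p .
$$
Since $\kappa(m,p,d)^p$ behaves like $m^{-\beta}$ up to logarithmic factors, with $\beta$ close to the exponent of $n$, this series is geometric provided $\gamma$ is small enough. The bound then follows from the total mass $\sum_j 2^{-j}$ together with a loss of at most a factor $2^{j\beta(\gamma+d)}2^{-jd}$ per layer. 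This step needs care: we need $\beta<1$ after accounting for $|Q|$, and otherwise we weight the allocation by $2^{-j\gamma}$ differently.

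We stop the layers at depth $J$ with $2^{-J}\asymp\kappa(n,p,d)^p$. On the remaining strip $\{\delta(x)\le 2^{-J}\}$ we use only $\|f-g\|_{L_\infty}\le 2$, which contributes $\lesssim 2^{-J}$ to $\|f-g\|^p_{L_p}$. The number of layers is $J\lesssim\log n$, and absorbing it costs at most another logarithmic factor. To avoid this loss entirely, I would allocate $n_Q$ with $\gamma>0$ so that the layer sum is dominated by $j=0$. Finally, by Remark \ref{remdecrease} it suffices to prove the bound for $n$ in the subsequence of totals produced by this construction.
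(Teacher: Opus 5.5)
Your Step~3 does not close, and no choice of $\gamma$ or reweighting of the allocation repairs it. With $n_Q\asymp n2^{-j\gamma}|Q|$, the $j$-th term of $\sum_j 2^{-j}\kappa(n2^{-j(\gamma+d)},p,d)^p$ is, up to logarithms, $n^{-p\tilde r/d}\,2^{j(p\tilde r(1+\gamma/d)-1)}$. Since $p\tilde r=\min\{2p,p+d\}>2$ for $p>1$, $d\ge 2$, these terms grow geometrically in $j$.

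The real problem is the count of cubes. Layer $j$ of your Whitney decomposition contains $\asymp 2^{j(d-1)}$ cubes. Even if all $n$ points were spent on that single layer, each cube would receive about $n2^{-j(d-1)}$ points. The per-cube bounds in that layer would then add up to $n^{-p\tilde r/d}\,2^{j((d-1)p\tilde r/d-1)}$, where $(d-1)p\tilde r/d>1$. Once $2^{j(d-1)}\gtrsim n$, most cubes are empty, and only the trivial bound, of order $2^{-j}$ per layer, is available. Per-cube estimates on isotropic cubes therefore cannot certify anything better than roughly $\|f-g\|_{L_p}^p\lesssim n^{-1/(d-1)}$. That is a polynomial loss against $\kappa(n,p,d)^p\approx n^{-p\tilde r/d}$, since $p\tilde r(d-1)>d$. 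In particular, the layers between depth $\approx\frac{\log n}{d-1}$ and your cutoff depth $J$ cannot be controlled.

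The defect is already in Step~1. The isotropic bound $|v|\le C/\delta(x)$ forces an isotropic rescaling, whereas a function in $\cB$ is steep near a face only in the direction normal to that face.

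The missing idea, which is how the paper proceeds, has two parts.
\begin{itemize}
\item Use the coordinatewise bound $|u_j|\le 2/\min\{x_j,1-x_j\}$ for $u\in\partial f(x)$, obtained from \eqref{uplow1} along coordinate lines.
\item Use an anisotropic tensor-product partition $R_k=J_{k_1}\times\cdots\times J_{k_d}$, $k\in\Z^d$, with $|J_{\pm l}|=\frac13 2^{-l}$ accumulating at $0$ and $1$.
\end{itemize}
On $R_k$ one has $|u_j|\le 6\cdot 2^{|k_j|}$. So with the anisotropic affine bijection $\Phi_k:\Omega\to R_k$, the functions $(2\sqrt d)^{-1}f\circ\Phi_k$ belong to $\cL$ on the whole unit cube. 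Theorem~\ref{T:further-improved} then applies verbatim to transported copies of the sets from Proposition~\ref{property-1-proposition}.

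This also shows that the obstacle you flag in Step~2 is not one. The restriction of $f$ to a closed box, after rescaling, is already in $\cL$ of the unit cube, so no enlarged $Q^*$ and no change to Lemma~\ref{L:boundmu} is needed.

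The decisive point is the count. Only $\asymp s^{d-1}$ boxes have $|k_1|+\cdots+|k_d|=s$. Hence each box can receive $n_k=\lfloor cn2^{-\alpha(|k_1|+\cdots+|k_d|)}\rfloor$ points, independently of its volume $|R_k|=3^{-d}2^{-(|k_1|+\cdots+|k_d|)}$. The sampling density thus grows toward $\partial\Omega$, and
\begin{equation*}
\|f-g\|_{L_p}^p\lesssim\sum_{k\in\Z^d}|R_k|\,\kappa(n_k,p,d)^p\lesssim \kappa(n,p,d)^p\sum_{k\in\Z^d}2^{-(1-\alpha p\tilde r/d)(|k_1|+\cdots+|k_d|)},
\end{equation*}
which converges for $\alpha$ small. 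Boxes with $n_k\le 1$ are covered by the trivial bound, since then $cn2^{-\alpha(|k_1|+\cdots+|k_d|)}<2$, so no depth cutoff is needed.
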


The main difference between Theorem \ref{T:further-improved} and Theorem \ref{T:further-improvedB} lies in the sampling strategy, which is more delicate for the class $\cB$. We need to refine the grid near the boundary $\partial\Omega$ in a  similar way to how it was done in the case of a tensor product grid studied in \S \ref{S:TensorB}.
Let us also recall that for the $L_\infty$ norm one has
$\rho_n(\cB)_{L_\infty} \asymp 1$,  while 
$\rho_n(\cL)_{L_\infty} \asymp n^{-1/d}$, and the case $p=1$ has already been discussed in Theorem 
\ref{T:tensorB}, see \eref{orBgen}, where there are no logarithms present.

\begin{proof}
Our strategy  is to 
decompose $\Omega$ into
subdomains and apply Theorem \ref{T:further-improved} to each of them 
after a proper rescaling  that will place us  in the class $\cL$  for each subdomain.  We fix the values of $n\ge 1$, and $d\ge 1$ and $1<p<\infty$.

We first define the partition in the univariate case.
We set 
$$
J_0:=[1/3,2/3],
$$
and with $a_l:=\frac 1 3 2^{-l}$, $l=0,1,\dots$, we define
\be
J_{-l}:=\frac 1 3 \cdot [2^{-l},2^{-(l-1)}]=:[a_l,a_{l-1}] \quad 
{\rm and}\quad J_{l}:=[1-a_{l-1},1-a_{l}],
\quad l=1,2,\dots .
\ee

The  collection $\{J_l\}_{l\in\Z}$  forms a   
partition of $(0,1)$ whose interiors are pairwise disjoint.  
By tensorization we obtain a partition $\{R_k\}_{k\in \Z^d}$ of  the interior of  $\Omega$ where
\be
R_k:=J_{k_1}\times\cdots \times J_{k_{d}},
\quad k=(k_1,\dots,k_d)\in\Z^d.
\ee
\vskip .1in
\noindent
\vskip .1in
\noindent
Since $J_l$ and $J_{-l}$ have length $\frac 13 2^{-l}$, the rectangle $R_k$ has measure
\be
\label{Omkvol}
|R_k |=3^{-d}\,2^{-(|k_1|+\cdots+|k_d|)}.
\ee
 We next define $\phi_l$ as the univariate affine function that maps $[0,1]$ onto $J_l$, 
Therefore, for all $k\in \Z^d$,   the transformation
$\Phi_k:\Omega\to R_k$, defined as
\be
x\mapsto  \Phi_k(x):=(\phi_{k_1}(x_1),\dots,\phi_{k_d}(x_d)), \quad x=(x_1,\dots,x_d),
\ee
is a bijective map between $\Omega$ and $R_k$.

For any given $n$, we  build a sampling set $\Lambda_n^*$
of cardinality at most $n$ as follows. For each $k\in\Z^d$, 
we  define
\be
n_k:=\lfloor cn2^{-\alpha (|k_1|+\cdots+|k_d|)}\rfloor, \quad \hbox{where}\quad c:=\(\frac {2}{1-2^{-\alpha}} -1\)^{-d}, 
\ee
and  where $\alpha>0$   is fixed and   small enough to ensure that
\be
\label{condalpha}
1-\alpha p\t r/d>0.
\ee
We pick sets $\Lambda^*_{n,k}$ comprised of $n_k$ points in $\Omega$ according to Proposition \ref{property-1-proposition},
and therefore satisfying Property 1 and 2 with $n_k$ in place of $n$. Note that, in view of the definition of $n_k$, 
the set $\Lambda^*_{n,k}$ is
non-empty only for finitely many values of $k$.
Using $\Phi_k$, we then map these points 
into $R_k$ and define
\be
\label{lambdaB}
\Lambda_n^*:=\bigcup_{k\in\Z^d}%
\Phi_k(\Lambda^*_{n,k}).
\ee
The set $\Lambda_n^*$ has cardinality at most $n$ since
 \be
\#(\Lambda_n^*) =\sum_{k\in\Z^d} n_k \leq cn \sum_{k\in\Z^d}2^{-\alpha (|k_1|+\cdots+|k_d|)}
=cn\(\sum_{l\in\Z} 2^{-\alpha|l|}\)^d
=cn\(\frac {2}{1-2^{-\alpha}} -1\)^d=n.
\ee

We now consider any two functions $f,g\in \cB$ that agree on the data sites $\Lambda^*_n$.
Similarly to the proof of Theorem \ref{T:further-improved}, we need to show that for some $C$ large enough
\be
\label{toprove22}
\|f-g\|_{L_p(\Omega_d)}\leq C\kappa(n,p,d).
\ee

In order to prove \eref{toprove22}, we consider any $d$ dimensional rectangle $R_k$, $k\in\Z^d$ and derive a bound for $\|f-g\|_{L_p(R_k)}$.  We fix $R_k$ and first   observe that for any $f\in \cB$, upon applying   \iref{uplow1}
to the univariate convex functions 
\be
t\mapsto f(x_1,\dots,x_{j-1},t,x_{j+1},\dots,x_d),
\ee
the coordinates $u_j$ of 
any $u=(u_1,\dots,u_d)\in \partial f(x)$
satisfy the bounds
\be
x\in R_k \implies  |u_j| \leq 6\cdot 2^{|k_j|}, \quad j=1, \ldots,d,\quad \hbox{with}\quad u=(u_1,\dots,u_d)\in \partial f(x).
\label{boundj}
\ee
For a smooth function, this simply means that $\Big |\frac{\partial f}{\partial x_j}(x)\Big | \leq 6\cdot 2^{|k_j|}$, $j=1, \ldots,d$.

These bounds  hold for the subgradients of $f$ and $g$. Given $f, g\in \cB$,
 for each $k\in \Z^d$, we denote by $f_k$ and $g_k$ 
the pullbacks of $f|_{R_k}, g|_{R_k}$,
\be
f_k(x):=(2\sqrt d)^{-1}f(\Phi_k(x)) \quad {\rm and} \quad g_k(x):=(2\sqrt d)^{-1}g(\Phi_k(x)), \quad x\in \Omega, \quad k\in \Z,
\ee
which are now functions defined on $\Omega$. Since $|\phi_l'(t)|= 2^{-l}/3$, 
it follows from \eref{boundj} that each component of their subgradient is  bounded 
by $(\sqrt d)^{-1}$. Therefore,
 $f_k,g_k\in \cL$, and agree on the data sites $\Lambda^*_{n,k}$ if $f$ and $g$ agree on $\Phi_k(\Lambda^*_{n,k})$. From Theorem \ref{T:further-improved}, we find that
\be
\|f_k-g_k\|_{L_p(\Omega)}\leq C \kappa(n_k,p,d).
\label{goalk}
\ee
This estimate is only for the value $n_k\geq 2$, but in the case $n_k=0,1$, we simply have $\|f_k-g_k\|_{L_p(\Omega)}\leq C$ since $f_k$ and $g_k$ are bounded by $1$. 
This allows us to rewrite,
regardless of the value of $n_k$,
\be
\|f_k-g_k\|_{L_p(\Omega)}\leq
C(\max\{1,n_k\})^{-\t r/d} \left [\log(\max\{2,n_k\})\right ]^{\gamma},\quad \t r:=\min\{2,1+d/p\},
\ee
where $\gamma$ is the power of the logarithmic factor showing on the right side of \eref{improved-upper-bound-equationB}, and $C$ depends only on $d$ and $p$. We note that
\be
\max\{1,n_k\} \geq \frac 1 2cn2^{-\alpha (|k_1|+\cdots+|k_d|)}
\quad {\rm and}
\quad \log(\max\{2,n_k\})\leq \log n,
\ee
where the first inequality follows from the definition of $n_k$
and the second uses $n_k\leq n$ and $n\geq 2$, and therefore
\be\label{final}
\|f_k-g_k\|_{L_p(\Omega)}\leq
C2^{\alpha \t r/d(|k_1|+\cdots+|k_d|)}n^{-\t r/d}(\log n)^\gamma, \quad n\geq 2.
\ee.

The $L_p$ norm of $(f-g)$ can then be decomposed according to
\be
\|f-g\|_{L_p(\Omega)}^p=\sum_{k\in\Z^d} \|f-g\|_{L_p(R_k)}^p=(2\sqrt d)^p\sum_{k\in\Z^d} |R_k|\|f_k-g_k\|_{L_p(\Omega)}^p.
\ee
We then obtain from \eref{final},  \eqref{Omkvol} and the fact that $\beta :=1-\alpha p\t r/d>0$, the estimate
\be
\|f-g\|_{L_p}^p \leq Cn^{-p\t r/d}(\log n)^{p\gamma} \sum_{k\in\Z^d}2^{-\beta(|k_1|+\cdots+|k_d|)} 
= C \(\frac 2{1-2^{-\beta}}-1\)^d n^{-p\t r/d}(\log n)^{p\gamma},
\ee
where $C$ depends only on $d$ and $p$. This concludes the proof.
\end{proof}

\section{Recovery algorithms for the model classes $\cL$ and $\cB$}
\label{S:OR}
 In the previous sections of this paper,  we have determined the asymptotic behavior (up to logarithm factors) of the sampling rates $\rho_n(\cK)_{L_p}$ for the model classes $\cK=\cL, \cB$ when the error is measured in $L_p$ for $1\le p\le \infty$.   For those results, the key issue was where to sample the functions.   Given  $n\ge 1$, we have described a set $\Lambda_n^*\subset\Omega=\Omega_d$ of data sites so that sampling an $f\in \cK$ on $\Lambda_n^*$  gives a data vector $w(f)$,    which determines  $f$  to the prescribed accuracy in $L_p$. 
 
 We then  noted that any mapping $A: \R^n \to L_p$ that maps each data vector $w=w(f)$ with $f\in \cK$, to an element in the set 
 \be 
 \label{Kw1}
 \cK_w:=\{g\in \cK: w(g)=w\},
 \ee
  gives a recovery algorithm with the near optimal  performance of Theorem 4.1 and Theorem 5.1.  In this section, we discuss how to numerically  construct such a mapping $A$.

  We shall work in the following setting.   The model class $\cK$ will always be either $\cL=\cL(\Omega)$ or $\cB=\cB(\Omega)$ and
the set  $\Lambda=\{x^i,\ i=1,\dots,n\}\subset \Omega$ of data sites can be arbitrary. 
 We  shall construct a practical recovery algorithm  $A_\Lambda^\cK:\R^n\to L_\infty(\Omega)$   which, when  
applied to any data vector $w=w(f)$
with $f\in \cK$,  
constructs a function 
\be
\label{defAK}
A_\Lambda^\cK(w) =g_w \in \cK_w.
\ee
This guarantees that $A$ is near optimal in the sense that for each $1\le p\le\infty$, we have
\be
\label{no1}
\|f-A_\Lambda^\cK(w(f))\|_{L_p}\le 2\rho(\cK,\Lambda)_{L_p}, \quad f\in \cK.
\ee

 Let us remark on two noteworthy properties of the  $A_\Lambda^\cK$ we construct.  The first is that it will not depend on the value $p$ of the $L_p$ space where we measure error.  In addition,  $A_\Lambda^\cK(w)$ will be defined for any $w\in\R^n$, not just data vectors.  This will be useful when treating noisy data. 
 When  $\Lambda=\Lambda_n^*$, the algorithm $A^\cK_{\Lambda_n^*}$  provides, via \eref{no1}, the rates obtained in Theorem~\ref{T:further-improved} (for $\cK=\cL$) and Theorem~\ref{T:further-improvedB} (for $\cK=\cB$) when the data is not noisy.  

Let us note that in  optimal recovery in Banach spaces, constructing a recovery operator is well studied, but there is no  numerical procedure that is known to work in a general setting.
In our case, we are successful because each of the model classes $\cL$ and $\cB$ has the following property:  
\vskip .1in
\noindent
{\bf Supremum Property}: {\it Each of the classes $\cK=\cL,\cB$ is closed under suprema. More precisely, if $\cM$ is a subset of $\cK$,  
then the function $g_\cM$ defined by
\begin{equation}
        g_\cM(x) := \sup_{g\in \cM} g(x),  \quad x\in\Omega,
    \end{equation}
also belongs to $\cK$. }  
\vskip .1in

As a particular consequence of this property, let us note that if $w=w(f)$ for some $f\in \cK$, then the function 
\begin{equation}
\label{recover-functions-definition}
   g_w^\cK(x) := \sup_{g\in \cK_w} g(x)\in \cK,
\end{equation}
 and even belongs to $\cK_w$ because $g_w^\cK(x^i)=w_i$ for all $i=1,...,n$.   In fact,  $g^\cK_w$ is the largest function in $\cK_w$. We  show below that this function can be constructively  found by using classical optimization  techniques.  

We now proceed  to describing our proposed constructive recovery mapping.   Our definition of $A_\Lambda^\cK$ is based, in part, on the fact that any function $g\in \cK$ only takes values in the interval $[-1,1]$.    
In particular, if  $V$ denotes the set of the $2^d$ vertices of $\Omega$, then for any $g\in \cK$, we have
\be 
\label{O1}
g(v)\le 1,\quad v\in V.
\ee

 Recall that we want the recovery map $A_\Lambda^\cK$ to be defined for any vector in $\R^n$.  Given any $w\in\R^n$, we define
\be
\label{defwtilde}
\tilde w:= (\tilde w_1,\dots,\tilde w_n),\ {\rm where}\ \tilde w_i:=\max(-1,w_i),\ i=1,\dots,n,
\ee
and introduce the sets
 \begin{equation}
 \label{deftildeKw}
        \widetilde \cK_w := \{g\in \cK:~g(x^i) \leq \tilde w_i, \ x_i\in\Lambda,   \ {\rm and}\ g(v)\le 1, \ v\in V\},\quad w\in\R^n.
    \end{equation}
    Note that $\tilde \cK_w$ is non-empty because it contains the function $g\equiv -1$.
    We define our reconstruction mapping for any $w\in\R^n$ by
 \begin{equation}
\label{defAopt}
        A_\Lambda^\cK(w):= \tilde g_w^\cK, \qquad \textrm{where for }x\in \Omega \qquad \tilde g_w^\cK(x) := \sup_{g\in \widetilde \cK_w} g(x).
    \end{equation}
    This mapping is well defined for all $w\in\R^n$  and due to the supremum property, the function $\tilde g_w^\cK$ is always in $\cK$.

The following lemma shows that  $\tilde g_w^\cK$  coincides with the previously defined $g_w^\cK$ when $w=w(f)$ for a function in $f\in \cK$, and therefore $A_\Lambda^\cK$ is near optimal in the sense of \eref{no1}.

\begin{lemma}
\label{L:Aopt}
    Let $\cK$ be either of the model classes $\cL$ or $\cB.$ Suppose that $f\in \cK$   and $w = w(f)$. Then,  we have
    \begin{equation}\label{modified-opt-problem}
        \tilde g_w^\cK= g_w^\cK,
    \end{equation}
 and therefore  $g_w^\cK\in \cK_w$ is a near optimal recovery  of $f$ from the data $w$.
\end{lemma}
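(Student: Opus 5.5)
The plan is to prove the two inequalities $g_w^\cK\le \tilde g_w^\cK$ and $\tilde g_w^\cK\le g_w^\cK$ pointwise on $\Omega$, by comparing the sets $\cK_w$ and $\widetilde\cK_w$ over which the suprema are taken.

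First, since $w=w(f)$ with $f\in\cK$, every $w_i=f(x^i)\in[-1,1]$. Hence $\tilde w=w$, and for every $g\in\cK$ we have $g(v)\le 1$ at the vertices $v\in V$. It follows that $\cK_w\subset\widetilde\cK_w$: any $g\in\cK$ with $g(x^i)=w_i$ satisfies $g(x^i)\le\tilde w_i$ and $g(v)\le 1$. Taking suprema gives $g_w^\cK\le\tilde g_w^\cK$ on $\Omega$.

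For the reverse inequality, the key observation is that $f$ itself lies in $\cK_w$, so $\cK_w\neq\emptyset$. By the Supremum Property, $g_w^\cK\in\cK$, and $g_w^\cK(x^i)=w_i$ because every member of $\cK_w$ takes the value $w_i$ at $x^i$. Now take any $h\in\widetilde\cK_w$ and set $\bar h:=\max\{h,g_w^\cK\}$. By the Supremum Property (with $\cM=\{h,g_w^\cK\}$), $\bar h\in\cK$. At the data sites we have $h(x^i)\le\tilde w_i=w_i=g_w^\cK(x^i)$, so $\bar h(x^i)=w_i$. Therefore $\bar h\in\cK_w$, which gives $h\le\bar h\le g_w^\cK$. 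Taking the supremum over $h$ yields $\tilde g_w^\cK\le g_w^\cK$.

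Combining the two inequalities gives \eqref{modified-opt-problem}. Since $g_w^\cK\in\cK_w$, the near optimality follows from \eref{nor}, i.e.\ \eref{no1}.

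There is no serious obstacle. The one step requiring care is the max-trick in the reverse inequality: it needs the Supremum Property for a two-element family, which holds because the pointwise maximum of convex functions is convex and preserves both the bound $|g|\le1$ and the Lipschitz constant $1$. It also needs the identity $\tilde w=w$, which is where the hypothesis $w=w(f)$ is used.
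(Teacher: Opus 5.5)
Your proof is correct and follows the paper's argument: the inclusion $\cK_w\subset\widetilde\cK_w$ (using $\tilde w=w$) gives one inequality, and a pointwise max trick gives the reverse. The only difference is that the paper takes the maximum with $f$ itself, $\bar g:=\max\{f,g\}\in\cK_w$. That avoids your preliminary step of showing $g_w^\cK\in\cK_w$ before using it as the anchor.
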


\begin{proof}
Let us first observe that when $w=w(f)$, for some $f\in \cK$,  then $w_i=\tilde w_i$. 
    Since we clearly have $\cK_w\subset \widetilde \cK_w$,
    it follows that
   $g_w^\cK\le \tilde g_w^\cK$.
    On the other hand, each $g\in \widetilde \cK_w$ is in $\cK$ and satisfies $g(x^i)\le \tilde w_i=w_i$, $i=1,\dots,n$.   It follows that for any $g\in \widetilde \cK_w$, the function  $\bar g:=\max(f,g) \in \cK_w$ and $\bar g\ge g$.   Therefore, $\tilde g_w^\cK\le g_w^\cK$.
\end{proof}

 \subsection{Evaluating  $A_\Lambda^\cK(w)$, $w\in\R^n$}
 \label{SS:constructing}
We have proven in Lemma \ref{L:Aopt} that the mapping $A_\Lambda^\cK$ defined by \eref{defAopt} is a near optimal recovery and therefore achieves the  accuracy
$\rho(\cK,\Lambda)_{L_p}$, $1\le p\le\infty$, for the model classes  $\cK=\cL,\cB$, regardless of the choice $\Lambda$  of data sites. At this stage, we cannot claim that $A_\Lambda^\cK$ is practical since we have not described how one can numerically
    execute the optimization in \eref{defAopt}.  In this subsection, we simplify the optimization problem \eqref{defAopt} that needs to be solved by reducing the set $\widetilde \cK_w$ over which maximization takes place.  Namely, 
we next show that   the function $\tilde g_w^\cK=A_\Lambda^\cK(w)$ can be found by solving 
an optimization problem over the finite dimensional space of affine functions on $\Omega$.

 Let $\mathbb {L}$ denote the linear space of affine functions $\ell$ on $\R^d$, i.e., 
\be
\label{affine}
\mathbb {L}:=\{\ell:\,\,\ell(\circ)= \zeta_\ell\cdot \circ +b_\ell, \quad \zeta_\ell\in\R^d, \,b_\ell\in \R\}.
\ee
In the next lemma, we show that we  can replace the sets $\widetilde \cK_w$  in \eref{defAopt} by 
the following set of affine functions
\begin{equation}
    {\mathbb{L}}_w^\cL := \{\ell\in \mathbb{L}: |\zeta_\ell|\le 1 \ {\rm and} \  \ \ell(x^i)\le \tilde w_i,\ i=1,\dots,n ,\    {\rm and} \  \ell(v)\le 1,\ v\in V\},
    \label{LBw}
\end{equation}
and 
\begin{equation}
    {\mathbb{L}}_w^\cB :=  \{\ell\in \mathbb{L}: \  \ \ell(x^i)\le \tilde w_i,\ i=1,\dots,n, \    {\rm and} \  \ell(v)\le 1,\ v\in V\},
    \label{LLw}
\end{equation}
when $\cK=\cL$ and $\cK=\cB$, respectively.
Both of these sets are  defined for all $w\in\R^n$ and are non-empty since they contain the affine function $\ell\equiv -1$.
 For each $w\in \R^n$ and $\cK=\cL,\cB$, we define
 \be
 \label{defellK}
  \ell_w^\cK(x):=\sup_{\ell\in \mathbb{L}_w^\cK} \ell(x),\quad x\in\Omega.
  \ee

\begin{lemma}
   Let $\cK$ be either of the model classes $\cL$ or $\cB$. For any $w\in \R^n$ and any $x\in\Omega_d$, we have
    \be
    \label{L:newaffine}
       \tilde g^\cK_w (x) =  \ell_w^\cK(x).
       \ee
\end{lemma}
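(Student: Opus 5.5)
We prove the two inequalities $\ell_w^\cK\le \tilde g_w^\cK$ and $\tilde g_w^\cK\le \ell_w^\cK$ separately, for fixed $w\in\R^n$ and $x\in\Omega$.

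\textbf{The inequality $\ell_w^\cK\le \tilde g_w^\cK$.} Take $\ell\in\mathbb{L}_w^\cK$. The constraints $\ell(v)\le 1$ at the vertices $v\in V$ give $\ell\le 1$ on $\Omega$, since an affine function attains its maximum over the cube at a vertex. Set $\hat\ell:=\max\{\ell,-1\}$. This function is convex and satisfies $|\hat\ell|\le 1$ on $\Omega$. In the case $\cK=\cL$, its subgradients lie in the convex hull of $\{\zeta_\ell,0\}$, so they have norm at most $1$; hence $\hat\ell\in\cK$. Moreover $\hat\ell(x^i)=\max\{\ell(x^i),-1\}\le \tilde w_i$ because $\tilde w_i\ge -1$, and $\hat\ell(v)\le 1$. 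So $\hat\ell\in\widetilde\cK_w$, which gives $\ell(x)\le\hat\ell(x)\le\tilde g_w^\cK(x)$. Taking the supremum over $\ell$ yields the inequality.

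\textbf{The inequality $\tilde g_w^\cK\le \ell_w^\cK$.} Take $g\in\widetilde\cK_w$ and $x\in\Omega$. It suffices to produce an affine $\ell\in\mathbb{L}_w^\cK$ with $\ell\le g$ on $\Omega$ and $\ell(x)=g(x)$, or $\ell(x)\ge g(x)-\varepsilon$ for every $\varepsilon>0$. Any such minorant automatically satisfies $\ell(x^i)\le g(x^i)\le\tilde w_i$ and $\ell(v)\le g(v)\le 1$.

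\emph{Interior points.} For $x\in{\rm Int}(\Omega)$, take $\ell(y)=g(x)+u\cdot(y-x)$ with $u\in\partial g(x)$. The subgradient inequality \eref{subgradient} gives $\ell\le g$ on $\Omega$. In the case $\cK=\cL$, the definition \eref{defL1} gives $|u|\le1$, so $|\zeta_\ell|\le 1$. In the case $\cK=\cB$, there is no gradient constraint.

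\emph{Boundary points (the main obstacle).} For $x\in\partial\Omega$, the subgradient may fail to exist or be unbounded. I would first use that $g$ is a supremum of its affine minorants on the interior to get $\tilde g_w^\cK(y)\le\ell_w^\cK(y)$ for $y\in{\rm Int}(\Omega)$, so the inequality holds on the interior. To pass to the boundary, I would then argue the following.
\begin{itemize}
\item[(a)] $\ell_w^\cK$ is convex, being a supremum of affine functions, and is bounded on $\Omega$.
\item[(b)] For $\cK=\cL$, both sides are $1$-Lipschitz, so continuity extends the inequality to all of $\Omega$.
\item[(c)] For $\cK=\cB$, take $y_t:=(1-t)x+tx_0$ with $x_0$ the center of $\Omega$. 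Convexity of $g$ gives $g(y_t)\ge g(x)-t(g(x_0)-g(x))$ up to sign bookkeeping, that is, $g(y_t)\ge g(x)-2t$. Alternatively, use the supporting affine function of $g$ at $y_t$ evaluated at $x$, together with the lower semicontinuity of the convex $g$ along segments, to get $\ell_w^\cK(x)\ge\limsup_{t\to0}\ell_{t}(x)\ge g(x)$.
\end{itemize}
I expect this boundary limiting argument for $\cB$ to be the step needing most care; it hinges on the fact that a convex function on a segment satisfies $g(x)\le\liminf_{t\to0} g(y_t)$. Taking the supremum over $g\in\widetilde\cK_w$ completes the proof.
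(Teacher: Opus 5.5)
Your inequality $\ell_w^\cK\le\tilde g_w^\cK$ via $\max\{\ell,-1\}$ and your interior-point argument are exactly the paper's proof. Your Lipschitz-continuity argument for $\cL$ at boundary points is also correct. The paper simply picks $\zeta\in\partial g(x)$ for every $x\in\Omega$, so you are right that the boundary needs care.

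The gap is step (c) for $\cB$. Convexity along $[x,x_0]$ gives $g(y_t)\le (1-t)g(x)+t g(x_0)$, which is an upper bound. Convex functions are upper, not lower, semicontinuous at boundary points along segments. So both ``$g(y_t)\ge g(x)-2t$'' and ``$g(x)\le\liminf_{t\to0}g(y_t)$'' are false in general. Take $d=1$, $\Lambda=\{1/2\}$, $w_1=-1$, and $g(0)=1$, $g\equiv-1$ on $(0,1]$. This $g$ is convex and lies in $\widetilde\cK_w$. Every affine $\ell\le g$ on $\Omega$ has $\ell(0)\le-1$, so your limit of supporting functions at $y_t$ only gives $\ell_w^\cB(0)\ge-1$, whereas $g(0)=1$. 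The lemma still holds here, since $\ell(y)=1-4y$ belongs to $\mathbb{L}_w^\cB$. But no argument through affine minorants of the individual $g$ on all of $\Omega$ can reach this value.

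The missing idea is that $\mathbb{L}_w^\cB$ imposes only finitely many point constraints, so $\ell$ need not lie below $g$ on all of $\Omega$. Let $F=\{y\in\Omega:\ y_j=0,\ j\in J_0,\ y_j=1,\ j\in J_1\}$ be the face with $x\in{\rm relint}\,F$ (possibly $F=\{x\}$, or $F=\Omega$).
\begin{itemize}
\item Since $g|_F$ is a finite convex function, it has a subgradient at $x$ relative to ${\rm aff}\,F$. That is, there is an affine $\ell_F$ with $\ell_F\le g$ on $F$ and $\ell_F(x)=g(x)$.
\item The affine function $\lambda(y):=\sum_{j\in J_0}y_j+\sum_{j\in J_1}(1-y_j)$ vanishes on $F$ and is strictly positive on $\Omega\setminus F$.
\item Put $\ell:=\ell_F-M\lambda$. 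Then $\ell(x)=g(x)$.
\item At data sites in $F$ one has $\ell=\ell_F\le g\le\tilde w_i$, and at vertices in $F$ one has $\ell\le g\le 1$.
\item At the finitely many data sites and vertices outside $F$, $\ell\le-1$ once $M$ is large.
\end{itemize}
Hence $\ell\in\mathbb{L}_w^\cB$ and $g(x)\le\ell_w^\cB(x)$ for all $x\in\Omega$, which closes the $\cB$ case. The steep term $-M\lambda$ is not admissible for $\cL$ because of the constraint $|\zeta_\ell|\le 1$, which is why your continuity argument is the right one there.
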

\begin{proof} Fix  any $w\in\R^n$ and fix any $x\in \Omega$.   Let $g\in \tilde \cK_w$ and let $\zeta$ be any element in $\partial g(x)$. Then the affine function $\ell(y):=g(x)+\zeta\cdot(y-x)$ satisfies
$\ell(y)\le  g(y)$, $y\in\Omega$, and so $\ell\in {\mathbb {L}}_w^\cK$ because $|\zeta|\le 1$   when $\cK=\cL$.   Since $g(x)=\ell(x)$, it follows that
\be 
\label{in1}
\tilde g_w^\cK(x):=\sup _{g\in \tilde \cK_w} g(x)\le \sup_{\ell\in {\mathbb{L}}_w^\cK}\ell(x)=: \ell_w^\cK(x).
\ee

    To prove the reverse inequality, let $w\in\R^n$ and $\ell\in \mathbb{L}_w^\cK$, and consider the function $g_\ell:=\max\{\ell, -1\}$.  Then, $g_\ell$ is a convex function which is in
    $\widetilde \cK_w$.  Since $\ell \le g_\ell$ on $\Omega$, we have
\be 
\label{in12}
\ell_w^\cK(x):=  \sup_{\ell\in {\mathbb{L}}_w^\cK}\ell(x)\le  \sup _{g\in \tilde \cK_w} g(x)=:\tilde g_w^\cK(x),\quad x\in\Omega .
\ee
The last two inequalities prove the lemma.
\end{proof}

The above lemma shows that  for each $w\in\R^n$, we have
\be 
\label{Copt}
A_\Lambda^\cK(w)=\ell_w^\cK, \quad \hbox{where}\quad \ell_w^\cK(x):=
\sup_{\ell\in 
{\mathbb L}^\cK_w} \ell(x),\quad x\in \Omega, \quad \cK=\cL,\cB.
\ee
Moreover, for each $x\in\Omega$, $\ell^\cK_w(x)$ can be found by classical optimization. For the model class $\cK=\cB$, $\ell_w^{\cB}(x)$ is found by linear programming, and when 
 $\cK=\cL$, $\ell_w^{\cL}(x)$ is found by optimization algorithms for conic programming due to the quadratic constraint on $\zeta$,
see, for example \cite{Boyd-Vandenberghe-convex-optimization-2004}.

 The recovery algorithm $A_\Lambda^\cK$ can be applied to any $w\in\R^n$ and gives a near optimal recovery when $w=w(f)$ for $f\in \cK$.  
 The following theorem shows that this mapping is stable.

\begin{theorem}
\label{T:stableA}
Let $\cK$ be either of the model classes $\cL$ or $\cB$.  For any $w,\eta\in \R^n$, one has the stability estimates
\be
\label{stableA}
\|A_\Lambda^\cK(w)-A_\Lambda^\cK(\eta)\|_{L_\infty}
\leq \|w-\eta\|_{\ell_\infty}.
\ee
\end{theorem}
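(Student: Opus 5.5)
We will use the representation \eref{Copt}, $A_\Lambda^\cK(w)=\ell_w^\cK$, i.e. the pointwise supremum of the affine functions in $\mathbb{L}_w^\cK$. The plan is to shift admissible affine functions downward by the constant $\delta:=\|w-\eta\|_{\ell_\infty}$. Write $\ell-\delta$ for the affine function with the same slope $\zeta_\ell$ and offset $b_\ell-\delta$. Because the argument is symmetric in $w$ and $\eta$, it suffices to prove the one-sided bound
\be
\ell_\eta^\cK(x)\ge \ell_w^\cK(x)-\delta,\quad x\in\Omega .
\ee
Exchanging the roles of $w$ and $\eta$ then gives \eref{stableA}.

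\textbf{Step 1.} Check that truncation is $1$-Lipschitz. Since $t\mapsto\max(-1,t)$ is nonexpansive, $|\tilde w_i-\tilde\eta_i|\le |w_i-\eta_i|\le\delta$ for every $i$.

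\textbf{Step 2.} Fix $\ell\in\mathbb{L}_w^\cK$ and ask whether $\ell-\delta\in\mathbb{L}_\eta^\cK$. The slope constraint $|\zeta|\le1$ (present only for $\cL$) is unchanged by the shift. The vertex constraints remain valid, since $\ell(v)-\delta\le 1$. The data constraints also hold:
\be
\ell(x^i)-\delta\le\tilde w_i-\delta\le\tilde\eta_i .
\ee
So $\ell-\delta$ is admissible, which gives $\ell^\cK_\eta(x)\ge \ell(x)-\delta$. Taking the supremum over $\ell\in\mathbb{L}_w^\cK$ yields the one-sided bound at every $x\in\Omega$. This is a pointwise statement, so it controls the $L_\infty$ norm directly.

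\textbf{Step 3.} Confirm finiteness. We must ensure $\ell_w^\cK$ is a genuine bounded function, so that the difference in \eref{stableA} makes sense. Each $\mathbb{L}_w^\cK$ contains $\ell\equiv-1$, so $\ell_w^\cK\ge -1$. Each admissible $\ell$ satisfies $\ell(v)\le1$ at the vertices, and on the cube $\Omega=\mathrm{conv}(V)$ this gives $\ell\le1$. Hence $-1\le\ell_w^\cK\le1$.

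There is no real obstacle in this argument. The only point needing care is that none of the constraints defining $\mathbb{L}_w^\cK$ is destroyed by a downward constant shift, and in particular that the truncation $\tilde w$ does not spoil the comparison. Step 1 is exactly what handles that point.
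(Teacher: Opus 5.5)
Your proof is correct and is essentially the paper's argument: shift an admissible function down by $\delta$, check that it is admissible for the other data vector, take suprema, and exchange the roles of $w$ and $\eta$. The only difference is that you run it on the affine minorants $\mathbb{L}_w^\cK$ via \eref{Copt}. Since that set has no lower-bound constraint, $\ell-\delta$ is admissible directly. The paper instead works in $\widetilde\cK_\eta$ and must use $\max\{-1,g-\delta\}$ to stay inside $\cK$; in your version this truncation is hidden in the proof of \eref{Copt}.
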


\begin{proof}
We define $\delta:=\|w-\eta\|_{\ell_\infty}$.  For each  $i=1, \ldots,n$, we have
$$
\t w_i:=\max\{-1,w_i\}\geq \max\{-1,\eta_i-\delta\}= \max\{-1,\t \eta_i -\delta\}. 
$$
  We claim that for each $g\in \widetilde \cK_\eta$, the function
\be
\label{defbarg}
\bar g:= \max\{-1,g-\delta\}\in \widetilde \cK_w.
\ee
 Indeed, this function is convex,  takes values in $[-1,1]$, and in the case $\cK=\cL$ has its subgradients $u\in\partial \bar g(x)$, $x\in\Omega_d$, satisfying $|u|\le 1$.  Moreover, 
\be 
\bar g(x_i)=\max\{-1,g-\delta\}(x^i)\leq \max\{ -1,\t \eta_i-\delta\}\leq \t w_i, 
\ee
thereby verifying our claim.
Therefore, for every $g\in \widetilde \cK_\eta$, we have
$$
g-\delta\leq \bar g\leq \tilde g_w^\cK,
$$
where we have used the definition of $\tilde g_w^\cK$. The latter inequality 
implies that $\tilde g_w^\cK\geq \tilde g_\eta^\cK-\delta$. Similarly, we find 
$\tilde g_\eta^\cK\geq \tilde g_w^\cK-\delta$, which proves the result.
\end{proof}

 The following corollary, bounds the performance of $A$ when the  measurements of $f$ are noisy.
\begin{cor}
\label{C:noist}
Let $f\in \cK$ where $\cK=\cL$ or $\cB$.  Suppose that in place of the true measurement vector $w=w(f)$ we have the noisy measurement vector $\eta$ with
$\|w-\eta\|_{\ell_\infty}\le \delta$. Then, we have
\be
\label{noisyerror}
\|f-A_\Lambda^\cK(\eta)\|_{L_p}\le 2\rho(\cK,\Lambda)_{L_p}+\delta.
\ee
\end{cor}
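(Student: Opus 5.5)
The plan is a triangle inequality combining the near-optimality of $A_\Lambda^\cK$ on exact data (Lemma \ref{L:Aopt}) with the stability estimate of Theorem \ref{T:stableA}. Write
\[
\|f-A_\Lambda^\cK(\eta)\|_{L_p}\le \|f-A_\Lambda^\cK(w)\|_{L_p}+\|A_\Lambda^\cK(w)-A_\Lambda^\cK(\eta)\|_{L_p},
\]
where $w=w(f)$ is the exact data vector. Each of the two terms will be bounded separately.

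For the first term, note that $w=w(f)$ with $f\in\cK$. By Lemma \ref{L:Aopt}, $A_\Lambda^\cK(w)=\tilde g_w^\cK=g_w^\cK$, and this function belongs to $\cK_w$. Hence both $f$ and $A_\Lambda^\cK(w)$ lie in $\cK_w$. Any optimal recovery $f_w$ has the property that every element of $\cK_w$ is within $r_w\le\rho(\cK,\Lambda)_{L_p}$ of it. The triangle inequality through $f_w$ therefore gives $\|f-A_\Lambda^\cK(w)\|_{L_p}\le 2\rho(\cK,\Lambda)_{L_p}$, which is exactly \eref{no1}. One technical point: if the infimum defining the radius is not attained, we use a center $f_w$ with radius $r_w+\epsilon$ and let $\epsilon\to0$.

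For the second term, Theorem \ref{T:stableA} gives $\|A_\Lambda^\cK(w)-A_\Lambda^\cK(\eta)\|_{L_\infty}\le\|w-\eta\|_{\ell_\infty}\le\delta$. Since $|\Omega|=1$, we have $\|\cdot\|_{L_p(\Omega)}\le\|\cdot\|_{L_\infty(\Omega)}$ for every $1\le p\le\infty$, so the same bound $\delta$ holds in $L_p$. Adding the two bounds yields \eref{noisyerror}.

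There is no real obstacle here. The only step needing care is the first one: we must check that $A_\Lambda^\cK(w)$ really lands in $\cK_w$ (which is the content of Lemma \ref{L:Aopt}), so that the generic ``factor $2$'' argument from the introduction applies.
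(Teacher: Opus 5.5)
Your proof is correct and is the argument the paper intends. The paper's one-line justification (``follows directly from Theorem~\ref{T:stableA}'') is shorthand for the same triangle inequality: the near-optimality bound \eref{no1} on exact data via Lemma~\ref{L:Aopt}, plus the $L_\infty$ stability estimate, using $\|\cdot\|_{L_p(\Omega)}\le\|\cdot\|_{L_\infty(\Omega)}$ on the unit cube.
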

\begin{proof}
This follows directly from Theorem \ref{T:stableA}.
\end{proof}

\section{Concluding remarks}
\label{S:widths}

In this paper, we have determined the rate of decay of the sampling numbers 
$\rho_n(\cK)_{L_p}$ for the model classes $\cK=\cL$ and $\cK=\cB$, through
upper and lower bounds that match up to logarithmic factors. 
We have also established
upper bounds on the linear sampling numbers $\rho_n^{\ell}(\cK)_{L_p}$ 
with matching lower bounds only in 
the case $p=1,2,\infty$. 
In the case $p=2$,  the rate of decay of 
$\rho_n^{\ell}(\cK)_{L_2}$ is inferior to that of $\rho_n(\cK)_{L_2}$. In contrast to classical model classes, this reveals an instance  where non-linear recovery methods from point samples significantly outperform their linear counterparts.
One remaining open question is the complete determination of the rate of decay of $\rho_n^{\ell}(\cK)_{L_p}$ for values of 
$p\neq 1,2,\infty$.

We have considered classes of functions defined on the unit cube
$\Omega=[0,1]^d$. A legitimate question is the validity of our results
for classes of convex functions,  defined on more general domains $\Omega\subset \R^d$. We leave the extension of our results to this more general setting for future work.

As observed in the introduction of this paper, sampling 
numbers have natural connections with Kolmogorov widths 
and entropy numbers. These quantities have been studied for the two model classes $\cL$ and $\cB$.

Unfortunately, the Kolmogorov widths of $\cL$ and $\cB$ are not completely known.
For $\cL$, it was proven  in  \cite{KM} that for  any dimension $d$, 
\be
\label{d1}
d_n(\cL)_{L_1}\asymp  n^{-2/d}.
\ee
We have also seen in the proof of Theorem \ref{T:tensorL} that
\be
\label{d2}
d_n(\cL)_{L_2}\asymp  n^{-\frac 3 {2d}}.
\ee
In view of the comparison \eref{widthsamp}, 
our results on linear sampling numbers
in Theorem \ref{T:tensorL} give the
general upper estimate
\be
\label{KolLup}
d_n(\cL)_{L_p}\leq C  n^{-r/d}, \quad 1\leq p\leq \infty,\quad\hbox{where}\quad r:=1+\frac 1 p, 
\ee
which is sharp when $p=1$ and $p=2$.

For the class $\cB$, in the univariate case $d=1$, the rate of decay of the Kolmogorov widths has been determined for all  $1\leq p<\infty$ in Theorem 1.2 from \cite{Konovalov2005}, 
where it was shown that (again with $r=1+1/p$), we have
\be\label{gen}
d_n(\cB)_{L_p}\asymp \begin{cases}
n^{-r}, \quad 1\leq p\leq 2,\\
n^{-\frac{3}{2}}, \quad \quad 2\leq p<\infty.
\end{cases}
\ee
Our result in Theorem \ref{T:tensorB} gives the
general upper estimate in any dimension $d\geq 1$
\be
\label{KolBup}
d_n(\cB)_{L_p}\leq C  n^{-r/d}, \quad 1\leq p<\infty,
\ee
which is again sharp when $p=1$ and $p=2$ since $\cL\subset \cB$, see \eref{d1} and \eref{d2}.

The rates of decay of the entropy numbers of the two classes $\cL$ and $\cB$ in $L_p$ are known. The first result on entropy of the class $\cL$ and all $d\geq 1$  is \be
\label{Bronstein}
\varepsilon_n(\cL)_{L_\infty}\asymp n^{-2/d},
\ee
which was shown in \cite{B}, Theorem 6. Later,  in \cite{KM}, it was proven that the entropy numbers satisfy
\be\label{KM}
\varepsilon_n( \cL)_{L_1}\asymp n^{-2/d}.
\ee
From this, one easily obtains that for all $1\leq p\leq \infty$,
\be\label{entropyall}
\varepsilon_n( \cL)_{L_p}\asymp n^{-2/d}.
\ee
Finally, it was proven in \cite{GS}
that the same result holds for the larger class $\cB$
\be\label{entr}
\varepsilon_n( \cB)_{L_p}\asymp n^{-2/d}, \quad 1\leq p<\infty.
\ee
Here $p=\infty$ is excluded because, as noted in \cite{GS}, the class $\cB$ is not compact in $L_\infty$.

We have mentioned that sampling numbers are not expected to decay faster than the entropy numbers. It is interesting to note that our results from \S \ref{S:optimal} and \S \ref{S:optimalB} 
show that both $\rho_n(\cL)_{L_p}$ and
$\rho_n(\cB)_{L_p}$ decay like 
$n^{-2/d}$ (up to logaritmic factors) when $1\leq p\leq d$, and therefore, up to logarithmic factors, provide the rate of decay of  the entropy numbers for $\cL$ and $\cB$.

\bibliographystyle{plain}

\bibliography{refs}

\Addresses

\end{document}